\newcommand{\Z}{\mathbb{Z}}
\renewcommand{\P}{\mathbb{P}}
\newcommand{\E}{\mathbb{E}}
\newcommand{\ceil}[1]{\left\lceil#1\right\rceil}
\newcommand{\floor}[1]{\left\lfloor#1\right\rfloor}
\renewcommand{\S}[1]{\mathcal S_{2n,#1}}
\newcommand{\Sd}[1]{\mathcal S_{D,#1}}
\newtheorem{thm}{Theorem}[section]
\newtheorem{lem}[thm]{Lemma}
\newtheorem{cor}[thm]{Corollary}
\newtheorem{conj}[thm]{Conjecture}
\theoremstyle{definition}
\newtheorem{defn}[thm]{Definition}
\theoremstyle{remark}
\newtheorem{rmk}{Remark}
\title{Sum and Difference Sets in Generalized Dihedral Groups}
\author[Ascoli, Cheigh, Dantas~e~Moura, Jeong, Keisling, Lilly, Miller, Ngamlamai, Phang]{Ruben Ascoli, Justin Cheigh, Guilherme~Zeus Dantas~e~Moura, Ryan Jeong, Andrew Keisling, Astrid Lilly, Steven J. Miller, Prakod Ngamlamai, Matthew Phang}
\date{\today}
\subjclass[2020]{11P99, 05B10}
\keywords{More Sums Than Differences, Dihedral Group, Generalized Dihedral Group}
\thanks{This work was supported by NSF grant DMS1947438, Williams College, and Harvey Mudd College.}
\begin{document}
\maketitle

\begin{abstract}
Given a group $G$, we say that a set $A \subseteq G$ has more sums than differences (MSTD) if $|A+A| > |A-A|$, has more differences than sums (MDTS) if $|A+A| < |A-A|$, or is sum-difference balanced if $|A+A| = |A-A|$. A problem of recent interest has been to understand the frequencies of these type of subsets.

The seventh author and Vissuet studied the problem for arbitrary finite groups $G$ and proved that almost all subsets $A\subseteq G$ are sum-difference balanced as $|G|\to\infty$. For the dihedral group $D_{2n}$, they conjectured that of the remaining sets, most are MSTD, i.e., there are more MSTD sets than MDTS sets. Some progress on this conjecture was made by Haviland et al. in 2020, when they introduced the idea of partitioning the subsets by size: if, for each $m$, there are more MSTD subsets of $D_{2n}$ of size $m$ than MDTS subsets of size $m$, then the conjecture follows.

We extend the conjecture to generalized dihedral groups $D=\Z_2\ltimes G$, where $G$ is an abelian group of size $n$ and the nonidentity element of $\Z_2$ acts by inversion. We make further progress on the conjecture by considering subsets with a fixed number of rotations and reflections. By bounding the expected number of overlapping sums, we show that the collection $\Sd{m}$ of subsets of the generalized dihedral group $D$ of size $m$ has more MSTD sets than MDTS sets when $6\le m\le c_j\sqrt{n}$ for $c_j=1.3229/\sqrt{111+5j}$, where $j$ is the number of elements in $G$ with order at most $2$. We also analyze the expectation for $|A+A|$ and $|A-A|$ for $A\subseteq D_{2n}$, proving an explicit formula for $|A-A|$ when $n$ is prime.
\end{abstract}

%\tableofcontents

\section{Introduction and Main Results}

%\subsection{Integers}

Given a set of $A$ integers, the sumset and difference set of $A$ are defined as 
\begin{equation}
    A+A\ =\ \{a_1 + a_2: a_1, a_2 \in A\}
    \quad \text{ and } \quad
    A-A\ =\ \{a_1 - a_2: a_1, a_2 \in A\}.
\end{equation}

These elementary operations are fundamental in additive number theory. A natural problem of recent interest has been to understand the relative sizes of the sum and difference sets of sets $A$.

\begin{defn}\label{defn:mstd}
    We say that a set $A$ has \textbf{more sums than differences (MSTD)} if $|A+A| > |A-A|$; has \textbf{more differences than sums (MDTS)} if $|A+A| < |A-A|$; or is \textbf{sum-difference balanced} if $|A+A| = |A-A|$. 
\end{defn}

We intuitively expect most sets to be MDTS since addition is commutative and subtraction is not. Nevertheless, MSTD subsets of integers exist. Nathanson detailed in \cite{nathanson2007problems} the history of the problem, and attributed to John Conway the first recorded example of an MSTD subset of integers, $\{0, 2, 3, 4, 7, 11, 12, 14\}$. Martin and O'Bryant proved in \cite{martin2006many} that the proportion of the $2^n$ subsets $A$ of $\{0, 1, \ldots, n-1\}$ which are MSTD is bounded below by a positive value for all $n\geq 15$. They proved this by controlling the ``fringe'' elements of $A$, those close to $0$ and $n-1$, which have the most influence over whether elements are missing from the sum and difference sets. In \cite{zhao2011characterized}, Zhao gave a deterministic algorithm to compute the limit of the ratio of MSTD subsets of $\{0, 1, \ldots, n-1\}$ as $n$ goes to infinity and found that this ratio is at least $4.28\times 10^{-4}$. For more on the problem of MSTD sets in the integers, see also \cite{hegarty2007explicit} and \cite{nathanson2006sets} for constructive examples of infinite families of MSTD sets, \cite{miller2010explicit} and \cite{zhao2010ballot} for non-constructive proofs of existence of infinite families of MSTD sets, and \cite{hegarty2009almost} and \cite{hogan2013generalized} for an analysis of sets with each integer from $0$ to $n-1$ included with probability $cn^{-\delta}$.

More recently, several authors have examined analogous problems for groups $G$ other than the integers. For example, Do, Kulkarni, Moon, Wellens, Wilcox, and the seventh author studied in \cite{do2015polytopes} the analogous problem for higher-dimensional integer lattices.

For finite groups, although the usual notation for the operation of the group is multiplication, we match the notation from previous work and define, for a subset $A \subseteq G$, its sumset and difference set as
\begin{equation}
    A+A\ =\ \{a_1 a_2: a_1, a_2 \in A\}
    \quad \text{ and } \quad
    A-A\ =\ \{a_1 a_2^{-1}: a_1, a_2 \in A\}.
\end{equation}
Definition~\ref{defn:mstd} of MSTD, MDTS, and sum-difference balanced sets apply in this context.

%\subsection{Other groups}

The approaches used to study MSTD subsets of integers do not generalize for MSTD subsets of finite groups due to the lack of fringes. Zhao proved asymptotics for numbers of MSTD subsets of finite abelian groups as the size of the group goes to infinity in \cite{zhao2010counting}. The seventh author and Vissuet examined the problem for arbitrary finite groups $G$, also with the size of the group going to infinity, and proved Theorem~\ref{thm:almost-all-balanced}.

\begin{thm}[\cite{miller2014most}] \label{thm:almost-all-balanced}
    Let $\{G_n\}$ be a sequence of finite groups, not necessarily abelian, with $|G_n| \to \infty$. Let $A_n$ be a uniformly chosen random subset of $G_n$. Then $\P[A_n + A_n = A_n - A_n = G_n] \to 1$ as $n \to \infty$. In other words, as the size of the finite groups increases  without bound, almost all subsets are balanced (with sumset and difference set equalling the entire group).
\end{thm}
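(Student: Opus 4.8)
The plan is to use the first moment method: it suffices to show that the expected number of elements of $G_n$ missing from either $A_n + A_n$ or $A_n - A_n$ tends to $0$. Writing $n = |G_n|$ and recalling that a uniformly random subset includes each element of $G_n$ independently with probability $1/2$, I would bound, for each fixed target $g \in G_n$, the probabilities $\P[g \notin A_n + A_n]$ and $\P[g \notin A_n - A_n]$, and then sum over the $n$ choices of $g$ for each of the two set conditions. The whole argument rests on exhibiting, for each $g$, a family of pairwise element-disjoint ``witnesses,'' each of which independently forces $g$ into the relevant set with probability bounded below.

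For the sumset, observe that $g \in A_n + A_n$ exactly when some $x \in A_n$ has $x^{-1} g \in A_n$, since $x \cdot (x^{-1}g) = g$. I would therefore study the map $\phi \colon G_n \to G_n$, $\phi(x) = x^{-1} g$, which is a bijection, so its functional graph decomposes $G_n$ into disjoint cycles. A fixed point of $\phi$ is an $x$ with $x^2 = g$, and for such an $x$ the single event $\{x \in A_n\}$ already witnesses $g \in A_n + A_n$ with probability $1/2$. On a cycle $x_0 \to x_1 \to \cdots \to x_{\ell - 1} \to x_0$ of length $\ell \ge 2$ (where $x_{i+1} = \phi(x_i)$), the consecutive pairs $\{x_0, x_1\}, \{x_2, x_3\}, \dots$ give $\lfloor \ell/2 \rfloor$ element-disjoint witness pairs, each satisfying $x_{2i} \cdot \phi(x_{2i}) = g$, so that ``both endpoints lie in $A_n$'' occurs with probability $1/4$ and forces $g \in A_n + A_n$. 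Counting one witness per fixed point and $\lfloor \ell/2 \rfloor \ge \ell/3$ pair-witnesses per cycle of length $\ell \ge 2$, every block of $\ell$ vertices contributes at least $\ell/3$ witnesses, hence at least $n/3$ witnesses in total. Since distinct witnesses involve disjoint elements, their defining events are mutually independent and each fails with probability at most $3/4$, so
\[
\P[g \notin A_n + A_n] \ \le\ (3/4)^{n/3}.
\]

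The difference set is handled identically via the bijection $\psi \colon G_n \to G_n$, $\psi(y) = g y$: here $g \in A_n - A_n$ exactly when some $y \in A_n$ has $g y \in A_n$, since $(gy)\, y^{-1} = g$. Now $\psi$ is left translation by $g$, so all of its cycles have length equal to the order of $g$; when $g \ne e$ there are no fixed points, and the same counting yields at least $n/3$ independent witness pairs and $\P[g \notin A_n - A_n] \le (3/4)^{n/3}$ (the case $g = e$ is trivial since $a a^{-1} = e$ whenever $A_n \ne \emptyset$). A union bound over all $g \in G_n$ and both conditions then gives
\[
\P[A_n + A_n \ne G_n \text{ or } A_n - A_n \ne G_n] \ \le\ 2 n\, (3/4)^{n/3},
\]
which tends to $0$ as $n \to \infty$, proving the theorem.

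The only genuine subtlety is the non-commutativity of $G_n$: unlike the integer or abelian setting, one cannot simply pair $x$ with a single canonical partner, and a priori $xy = g$ need not imply $yx = g$. The functional-graph decomposition of the bijections $\phi$ and $\psi$ is exactly what organizes the witnesses into independent, element-disjoint blocks regardless of the group structure. I expect the counting step to be the main point requiring care, in particular checking that the cycle decomposition always yields $\Omega(n)$ disjoint witnesses even in degenerate cases with many fixed points (for instance $g = e$ in an elementary abelian $2$-group, where every element is its own square root); once that linear lower bound is secured, the exponential decay $(3/4)^{n/3}$ comfortably absorbs the linear union bound.
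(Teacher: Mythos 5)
Your proposal is correct and is essentially the same argument as the proof of this theorem in \cite{miller2014most} (the present paper only states the result with a citation and does not reprove it): there, as here, one fixes $g$, extracts at least $|G_n|/3$ pairwise element-disjoint witnesses from the $n$ representations $g = x\cdot(x^{-1}g)$ --- each element lies in at most two such pairs, which is exactly the $2$-regularity underlying your cycle decomposition of $x\mapsto x^{-1}g$ --- then uses independence of inclusions to get $\P[g\notin A_n+A_n]\le (3/4)^{|G_n|/3}$, argues analogously for $A_n-A_n$, and concludes with the union bound $2|G_n|(3/4)^{|G_n|/3}\to 0$. Your treatment of the degenerate cases (fixed points $x^2=g$ as probability-$1/2$ witnesses, and $g=e$ for the difference set, where more precisely $\P[e\notin A_n-A_n]=2^{-|G_n|}$) matches what is needed there as well.
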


%\subsection{Dihedral Groups}

Furthermore, for the case of dihedral groups $D_{2n}$, they proposed Conjecture~\ref{conj:dih-mmstdtmdts}.

\begin{conj}[\cite{miller2014most}] \label{conj:dih-mmstdtmdts}
    Let $n \geq 3$ be an integer. There are more MSTD subsets of $D_{2n}$ than MDTS subsets of $D_{2n}$.  
\end{conj}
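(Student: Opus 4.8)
The plan is to reduce this dihedral statement to a counting problem in the abelian rotation group $\Z_n$, isolate a deterministic advantage enjoyed by the reflections, and then show by a profile-by-profile count that this advantage makes MSTD sets outnumber MDTS sets. Writing each element of $D_{2n}$ as a rotation $r^a$ or a reflection $sr^b$ with $a,b\in\Z_n$, any $A\subseteq D_{2n}$ splits as $A=R\sqcup S$, where $R\subseteq\Z_n$ records the rotation exponents and $S\subseteq\Z_n$ the reflection exponents. Computing the four product types inside $\Z_n$ gives
\[
  A+A\ \leftrightarrow\ \big((R+R)\cup(S-S),\ (S-R)\cup(S+R)\big),\qquad
  A-A\ \leftrightarrow\ \big((R-R)\cup(S-S),\ S-R\big),
\]
where the first coordinate is the set of rotation exponents appearing and the second the reflection exponents. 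Writing $\rho_\pm,\sigma_\pm$ for the sizes of these rotation and reflection parts, we have $|A+A|=\rho_++\sigma_+$ and $|A-A|=\rho_-+\sigma_-$, so $A$ is MSTD exactly when $\sigma_+-\sigma_->\rho_--\rho_+$. The crucial structural observation is that \emph{the reflections always help}: the reflection part of the difference set is $S-R$ while that of the sumset is $(S-R)\cup(S+R)\supseteq S-R$, so $\sigma_+\ge\sigma_-$ holds deterministically. Thus the only obstruction to being MSTD is a rotation deficit $\rho_--\rho_+$, which mirrors the classical fact that difference sets of abelian sets tend to exceed sumsets.

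Next I would quantify this in the generic regime, where no pairwise sum or difference of $A$ collides beyond the forced coincidences. For a set with $|R|=k$ and $|S|=\ell=m-k$, collision-free counting gives $\sigma_+-\sigma_-=k\ell$ (the sets $S+R$ and $S-R$ are then disjoint, each of size $k\ell$) and $\rho_--\rho_+=\binom{k-1}{2}$ (up to $O(1)$ corrections from the $2$-torsion of $\Z_n$ and the shared element $0$; this is where the $j$-dependence of $c_j$ enters, since the doubles $2a$ behave differently on order-$\le 2$ elements). Hence a generic $A$ of profile $(k,\ell)$ is MSTD iff $k\ell>\binom{k-1}{2}$, i.e.\ roughly $k\lesssim 2m/3$; it is MDTS in the reverse range; and the all-reflection sets $(k=0)$ are balanced.

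I would then count. The number of size-$m$ subsets of profile $(k,\ell)$ is $\binom{n}{k}\binom{n}{m-k}$, which is symmetric under $k\leftrightarrow m-k$ and, for fixed small $m$, peaks at the balanced split $k\approx m/2$. Since the MSTD range $k\lesssim 2m/3$ contains this peak and extends \emph{past} the midpoint while the MDTS range is confined to the upper tail $k\gtrsim 2m/3$, the signed count is dominated by its MSTD part: it is asymptotic to $\tfrac{n^m}{m!}\sum_k\epsilon_k\binom{m}{k}$ with $\epsilon_k\in\{+1,0,-1\}$, and a binomial-tail estimate makes $\sum_k\epsilon_k\binom{m}{k}$ strictly positive for every $m$ (the few small cases checked directly). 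Thus, generically, $|\{\text{MSTD of size }m\}|>|\{\text{MDTS of size }m\}|$ for each fixed $m$.

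Finally I would remove the genericity assumption by bounding collisions. The expected number of coincidences among the $O(m^2)$ pairwise sums and differences of a uniform $A\in\S{m}$ is $O(m^4/n)$, so below a birthday-type threshold $m\le c_j\sqrt n$ only a negligible fraction of profiles fails the generic classification; in particular the boundary profiles $k\approx 2m/3$, the only ones a single collision could flip, are too sparse to reverse the strict surplus above. Summing the resulting per-size inequalities over $6\le m\le c_j\sqrt n$ and invoking the Haviland et al.\ reduction then yields the conjecture in this range. The main obstacle is exactly this last step beyond $m\sim\sqrt n$: there the sumset and difference set already fill a constant fraction of $D_{2n}$, collisions are the rule rather than the exception, the clean classification breaks down, and deciding MSTD versus MDTS requires tracking how collisions shrink sums and differences \emph{differentially} — a finer second-moment analysis that the present overlap bounds do not control, which is why the unconditional conjecture remains open.
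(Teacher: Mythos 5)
You have not proved the statement, and you could not have: the statement is Conjecture~\ref{conj:dih-mmstdtmdts}, which the paper itself leaves open. What the paper actually establishes is a collection of partial results: the per-size surplus for $6\le m\le c_j\sqrt n$ (Theorem~\ref{smallm}), the small cases $m=2,3$ (Lemmas~\ref{S2n2} and~\ref{S2n3}), and that all sets with $m>n$ are balanced. Your sketch is, in substance, a faithful reconstruction of exactly that machinery: the rotation/flip split with the deterministic inclusion of the difference-set flips in the sumset flips (the paper's observation that $R-F=F-R$ while $R+F$ and $-R+F$ are generically distinct), the collision-free profile threshold (your ``MSTD iff $k\lesssim 2m/3$'' is the paper's Equation~\eqref{nocol}, $k_{\mathrm{flips}}>m/3-1$; your $\rho_--\rho_+=\binom{k-1}{2}$ should be $\binom{k}{2}-k$, an $O(1)$ slip you flagged), the profile count $\binom{n}{k}\binom{n}{m-k}$ with the binomial-tail verification (the paper's Equation~\eqref{5m/12}, checked numerically for $m\ge 6$), and the first-moment collision bound $\E[X_A]=O(m^4/n)$ with Markov (Lemma~\ref{1overn}). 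So as a blind attempt it correctly rediscovers the state of the art, including the $j$-dependence of $c_j$ through the $2$-torsion.

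The genuine gap is your final step: ``summing the resulting per-size inequalities over $6\le m\le c_j\sqrt n$ \ldots{} yields the conjecture in this range'' proves only the restricted theorem, not the conjecture, which quantifies over \emph{all} subsets of $D_{2n}$. The sizes $m\in\{4,5\}$ and, far more seriously, the entire range $c_j\sqrt n<m\le n$ are untouched, and that uncovered range is where almost all subsets live, since $\binom{2n}{m}$ peaks at $m=n$. There Theorem~\ref{thm:almost-all-balanced} says almost every set is balanced, so the conjecture turns on the sign of the MSTD--MDTS surplus among the vanishing fraction of unbalanced sets --- a quantity that first-moment bounds on collisions cannot see, because once $m\gg\sqrt n$ collisions are the rule and one must track how they shrink $|A+A|$ and $|A-A|$ \emph{differentially} (e.g., via the missed-sums analysis begun in Theorem~\ref{thm:expectation_dif_prime}, which would further require variance control, or via an injection from MDTS to MSTD sets). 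Your closing paragraph candidly concedes this, so the verdict is that your proposal establishes essentially Theorem~\ref{smallm} by the paper's own route but must be graded as incomplete as a proof of the stated conjecture --- which is precisely where the paper stops as well.
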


Given a set $A\subseteq D_{2n} = \langle r, s \mid r^n, s^2, rsrs\rangle$, define $R$ (resp. $F$) as the set of elements of $A$ of the form $r^i$ (resp. $r^is$), called rotation elements (resp. flip elements). Hence, $A = R \cup F$. Then, we can write
\begin{align}
    A + A\ &=\ (R+R) \cup (F+F) \cup (R+F) \cup (-R+F), \nonumber\\
    A - A\ &=\ (R-R) \cup (F+F) \cup (R+F).
\end{align}

Intuition for Conjecture~\ref{conj:dih-mmstdtmdts} comes from noting that $F+F$ and $R+F$ contribute to both $A+A$ and $A-A$; $R+R$ and $-R+F$ contribute only to $A+A$; and $R-R$ contributes only to $A-A$. 

In 2020, Haviland, Kim, Lâm, Lentfer, Trejos Suáres, and the seventh author made progress towards Conjecture~\ref{conj:dih-mmstdtmdts} by partitioning subsets of $D_{2n}$ by size. They proposed Conjecture~\ref{conj:S2nm-mmstdtmdts} as a means of proving Conjecture~\ref{conj:dih-mmstdtmdts}.

\begin{conj}[\cite{haviland2020more}] \label{conj:S2nm-mmstdtmdts}
    Let $n \geq 3$ be an integer, and let $\mathcal S_{2n, m}$ denote the collection of subsets of $D_{2n}$ of size $m$. For any $m \leq 2n$, $\mathcal{S}_{2n,m}$ has at least as many MSTD sets as MDTS sets.
\end{conj}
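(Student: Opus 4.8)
The plan is to reduce the count comparison to an exact \emph{imbalance formula} extracted from the rotation/flip decomposition, and then to pair sets by a size‑preserving involution that isolates the single asymmetry responsible for MDTS sets. Writing $A = R \cup F$ with $R$ the rotation part and $F$ the flip part, and setting $M := F+F$ (a symmetric rotation set), the decomposition recalled above lets us cancel the pieces $M$ and $R+F$ common to both $A+A$ and $A-A$, leaving
\[
\Delta(A) := |A+A| - |A-A| \;=\; \underbrace{\big(|(R+R)\setminus M| - |(R-R)\setminus M|\big)}_{=:\;\delta_{\mathrm{rot}}(A)} \;+\; \underbrace{\big|(-R+F)\setminus(R+F)\big|}_{\ge\,0}.
\]
The second term, the \emph{flip gain}, is always nonnegative and is the only force pushing toward MSTD; hence every MDTS set must have $\delta_{\mathrm{rot}}(A) < 0$, which in turn forces both $M \neq \langle r\rangle$ and $|(R-R)\setminus M| > |(R+R)\setminus M|$. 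First I would record these structural constraints: when the flip part is rich enough that $F+F$ fills the rotation subgroup, $\delta_{\mathrm{rot}}=0$ and the set is automatically non‑MDTS, so MDTS sets are confined to those $A$ whose flip part has a deficient difference set.

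Next I would introduce the involution $\tau$ on $\mathcal{S}_{2n,m}$ that fixes every rotation element and replaces each flip $r^i s$ by $r^{-i}s$, i.e.\ $(R,F)\mapsto(R,-F)$; this is a bijection of the flip coset and so preserves the size $m$. A direct computation shows that $\tau$ leaves $|A+A|$, the masking set $M$, and hence $\delta_{\mathrm{rot}}$ unchanged, while replacing the flip set $R+F$ in $A-A$ by $-R+F$; therefore
\[
\Delta(\tau A) \;=\; \delta_{\mathrm{rot}}(A) \;+\; \big|(R+F)\setminus(-R+F)\big|,
\qquad
\Delta(A)+\Delta(\tau A) \;=\; 2\,\delta_{\mathrm{rot}}(A) \;+\; \big|(R+F)\,\triangle\,(-R+F)\big|.
\]
Every orbit $\{A,\tau(A)\}$ with $\delta_{\mathrm{rot}}\ge 0$ contains no MDTS set and so already obeys the desired inequality; the entire difficulty is concentrated on orbits with $\delta_{\mathrm{rot}} < 0$, where whether $A$ or $\tau(A)$ is MDTS hinges on comparing the rotation deficit $|\delta_{\mathrm{rot}}|$ against the two flip gains.

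To control those orbits I would condition on the flip part $F$ (fixing $M$) and on $|R|$, then sum over all rotation sets $R$ of that size. The quantity $\delta_{\mathrm{rot}}$ is precisely a \emph{masked} sum‑versus‑difference comparison inside $\langle r\rangle\cong\Z_n$, and by the cyclic‑group asymptotics of Zhao \cite{zhao2010counting} this term typically favors the difference set, so on average $\delta_{\mathrm{rot}}<0$. The real content of the conjecture is thus that, summed over all configurations, the flip gain outweighs the rotation deficit; the approach is to prove $\E\big[\,|(-R+F)\setminus(R+F)|\,\big] \ge \E\big[\,|\delta_{\mathrm{rot}}|\,\big]$ within each size class and then upgrade this expectation bound to the corresponding bound on \emph{counts} of MSTD versus MDTS sets. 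The masking effect is the crucial lever: as $|F|$ grows, $M$ fills $\langle r\rangle$ and kills the deficit entirely, so the dangerous orbits survive only among sets with small or highly structured $R$ and $F$.

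The main obstacle, and the reason the statement remains a conjecture, is making this control \emph{uniform over all $m\le 2n$ and all $n\ge 3$}. Converting a favorable expectation into an exact count inequality requires concentration that degrades in exactly two extreme regimes: very small $m$, where there are too few elements for any averaging and the $\delta_{\mathrm{rot}}<0$ orbits are not yet dominated by the flip gain; and configurations in which $F$ and $R$ are arithmetic‑progression‑like, making $M$ small and the rotation deficit as large as possible relative to the flip gain. Dominating every size class simultaneously — rather than the range $6\le m\le c_j\sqrt{n}$ reachable by the expectation bounds developed here — is precisely the gap these techniques do not yet close.
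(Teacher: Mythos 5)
Your identity and involution are correct, but you should be aware that the statement you are addressing is Conjecture~\ref{conj:S2nm-mmstdtmdts} of the paper: it is \emph{open}, and the paper contains no proof of it. What the paper proves are fragments: the cases $m=2$ and $m=3$ by direct casework (Lemmas~\ref{S2n2} and~\ref{S2n3}), the case $m>n$ by showing every such set is balanced (inclusion--exclusion on $rL, rL^{-1}, L$), and the range $6\le m\le c_j\sqrt n$ by bounding the expected number of collisions $X_A$ and applying Markov's inequality (Theorem~\ref{smallm}). Within that context, your exact imbalance formula $\Delta(A)=\delta_{\mathrm{rot}}(A)+|(-R+F)\setminus(R+F)|$ is a correct quantitative sharpening of the paper's heuristic following Conjecture~\ref{conj:dih-mmstdtmdts}, and your computation that $\tau:(R,F)\mapsto(R,-F)$ preserves $|A+A|$, $M=F+F$, and $\delta_{\mathrm{rot}}$ while converting the flip gain into $|(R+F)\setminus(-R+F)|$ checks out; the orbit identity $\Delta(A)+\Delta(\tau A)=2\delta_{\mathrm{rot}}(A)+|(R+F)\,\triangle\,(-R+F)|$ is likewise correct.

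The genuine gaps are two, and they are exactly where your plan stops being a proof. First, the pivotal inequality $\E\bigl[|(-R+F)\setminus(R+F)|\bigr]\ge \E\bigl[|\delta_{\mathrm{rot}}|\bigr]$ is asserted rather than proved, and as stated per size class it fails in the regimes that matter: conditioned on $|F|=0$ the flip gain is identically zero while $\delta_{\mathrm{rot}}$ is typically strictly negative (generically $|R-R|\approx 2|R+R|$), so any such inequality can only hold after weighting by the distribution of $k=|F|$ --- which is precisely the hypergeometric concentration the paper invokes, and which is only effective for $m$ up to order $\sqrt n$. Second, even granting the expectation inequality, an inequality of means does not yield the count inequality $\#\mathrm{MSTD}\ge\#\mathrm{MDTS}$: the counts compare the \emph{sign} of $\Delta$ set by set, and a small family of configurations with enormous flip gain can carry the entire expectation while a majority of sets have $\Delta<0$. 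Converting means into sign-counts requires a concentration step (the paper's Markov bound on $X_A$, Lemma~\ref{1overn}), and your orbit pairing does not substitute for it, since nothing prevents both members of an orbit $\{A,\tau A\}$ from being MDTS when $2|\delta_{\mathrm{rot}}(A)|$ exceeds $|(R+F)\,\triangle\,(-R+F)|$. You correctly acknowledge this at the end; the honest summary is that your framework reproves the shape of what is known (non-MDTS when $F+F$ fills the rotations, cf.\ Lemma~\ref{>n/2}, and dominance of MSTD for $m=O(\sqrt n)$) but does not close the conjecture, and no argument currently does.
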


They showed that Conjecture~\ref{conj:S2nm-mmstdtmdts} holds for $m = 2$, which we reproduce in this paper, and we also extend their approach to $m = 3$. They also showed that Conjecture~\ref{conj:S2nm-mmstdtmdts} holds for $m > n$ by showing that all sets in $\mathcal{S}_{2n, m}$ are sum-difference balanced. We prove this result in this paper, using Lemma~\ref{>n/2 prelim}. These results are proved in Section \ref{direct analysis}.

\begin{lem}\label{>n/2 prelim}
Let $n\geq 3$ be an integer, and let $A \subseteq D_{2n}$. Let $R$ (resp. $F$) be the subset of rotations (resp. flips) in $A$. Suppose that $|F|>\frac{n}{2}$ or $|R|>\frac{n}{2}$. Then, $A$ cannot be MDTS.
\end{lem}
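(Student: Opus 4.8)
The plan is to leverage the decompositions stated just above the lemma,
\begin{align*}
A + A &= (R+R) \cup (F+F) \cup (R+F) \cup (-R+F), \\
A - A &= (R-R) \cup (F+F) \cup (R+F),
\end{align*}
in which the first two terms of each line are rotations and the remaining terms are flips. Since rotations and flips are distinct types of elements of $D_{2n}$, the rotation part and flip part contribute disjointly, so $|A+A|$ and $|A-A|$ each split as the size of their rotation part plus the size of their flip part. The immediate observation I would record is that $(R+F) \subseteq (R+F) \cup (-R+F)$, so the flip part of $A-A$ is always contained in the flip part of $A+A$. Consequently $|A+A| \ge |A-A|$ — which is precisely the assertion that $A$ is not MDTS — will follow as soon as I show that the rotation part of $A+A$ is at least as large as the rotation part of $A-A$, i.e.\ that $|(R+R)\cup(F+F)| \ge |(R-R)\cup(F+F)|$.

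To control the rotation parts, I would first isolate the elementary pigeonhole fact: if $B \subseteq \Z_n$ satisfies $|B| > n/2$, then $B+B = B-B = \Z_n$. Indeed, for any target $g$, membership $g \in B+B$ (resp.\ $g \in B-B$) amounts to $B$ meeting a translate or reflection of $B$ of the same size $|B|$, and two subsets of $\Z_n$ each of size exceeding $n/2$ cannot be disjoint. The hypothesis $|F| > n/2$ or $|R| > n/2$ is exactly what licenses this fact.

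Now I split into the two cases. If $|R| > n/2$, then $R+R = R-R = \Z_n$ (identifying the rotations with residues mod $n$), so both rotation parts equal the full rotation subgroup and are in particular equal. If instead $|F| > n/2$, then the flip--flip products comprising $F+F$ correspond to the difference set of the flip residues, and $|F| > n/2$ forces this to be all of $\Z_n$; hence $(R+R)\cup(F+F) = (R-R)\cup(F+F) = \Z_n$ regardless of $R$. In either case the rotation parts of $A+A$ and $A-A$ coincide, and combining this with the flip containment above yields $|A+A| \ge |A-A|$, so $A$ cannot be MDTS.

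The argument is short, so there is no single serious obstacle; the points requiring care are purely bookkeeping. I would be careful to justify that the rotation and flip contributions are genuinely disjoint (so that cardinalities add), and to verify the pigeonhole fact with the correct \emph{strict} inequality: at the boundary $|B| = n/2$ the conclusion genuinely fails (for example when $n$ is even and $B$ is the set of even residues, where $B-B$ omits the odd residues), which is why the hypothesis is stated as $|F| > n/2$ or $|R| > n/2$ rather than with $\ge$.
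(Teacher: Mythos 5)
Your proof is correct and follows essentially the same route as the paper: the paper's Lemma~\ref{>n/2} uses the same intersecting-translates pigeonhole argument (two subsets of an $n$-element coset each of size exceeding $n/2$ must meet) to show every rotation lies in both $A+A$ and $A-A$ whenever $\max(|R|,|F|)>n/2$, and the subsequent remark concludes non-MDTS via the same flip-part containment $R+F\subseteq (R+F)\cup(-R+F)$ that you record. Your only cosmetic deviations are specializing to $D_{2n}$ with residues mod $n$ (the paper works in a general generalized dihedral group) and phrasing the conclusion as equality of rotation parts rather than as $R_D\subseteq A+A$, neither of which changes the substance.
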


%\section{Generalized Dihedral Groups}

Furthermore, we extend Conjecture~\ref{conj:dih-mmstdtmdts} as follows. A \textbf{generalized dihedral group} is given by $D = \Z_2\ltimes G$, where $G$ is any abelian group and where the nonidentity element of $\Z_2$ acts on $G$ by inversion. 

\begin{conj}\label{conj:gendih-mmstdtmdts}
    Let $G$ be an abelian group with at least one element of order $3$ or greater, and let $D = \Z_2\ltimes G$ be the corresponding generalized dihedral group. Then, there are more MSTD subsets of $D$ than MDTS subsets of $D$.
\end{conj}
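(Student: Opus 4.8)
The plan is to reduce the group-wide statement to a statement about each size class, exactly as in the passage from Conjecture~\ref{conj:dih-mmstdtmdts} to Conjecture~\ref{conj:S2nm-mmstdtmdts}. Writing $\Sd{m}$ for the collection of size-$m$ subsets of $D$, it suffices to show that each $\Sd{m}$ contains at least as many MSTD as MDTS sets, with strict inequality for at least one value of $m$; summing over $0 \le m \le |D|$ then yields the conjecture. So I would fix $m$ and try to produce, for each MDTS set, a distinct MSTD set of the same size, or, failing an exact injection, to show that the expected value of $|A+A| - |A-A|$ over a uniform $A \in \Sd{m}$ is nonnegative.

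Second, I would refine the partition by recording the numbers of rotations $|R|=a$ and flips $|F|=b$ with $a+b=m$, and exploit
\[
A+A = (R+R)\cup(F+F)\cup(R+F)\cup(-R+F), \qquad A-A = (R-R)\cup(F+F)\cup(R+F).
\]
The flip elements of $A+A$ come from $(R+F)\cup(-R+F)$ while those of $A-A$ come from $R+F$ alone, so $A+A$ never has fewer flips than $A-A$; the two sets also share the rotation block $F+F$. Hence $A$ can be MDTS only if the rotation deficit $|(R-R)\cup(F+F)|-|(R+R)\cup(F+F)|$ strictly exceeds the flip surplus $|(-R+F)\setminus(R+F)|$, and the goal becomes showing that on average the surplus dominates the deficit. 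Note the surplus is nonempty only when $R \neq -R$, which is possible precisely when $G$ has an element of order $\ge 3$; this is exactly why the hypothesis excludes elementary abelian $2$-groups, where $-R=R$ forces $-R+F = R+F$ and the sumset and difference set become too symmetric.

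Third, I would estimate $\E[|A+A|]$ and $\E[|A-A|]$ by inclusion–exclusion on collisions, i.e.\ by bounding the expected number of coincidences among the $O(m^2)$ products. When $m$ is below the birthday threshold $\sqrt n$, distinct products almost never coincide, so each of $A+A$ and $A-A$ is close to its maximal possible size, and the flip surplus tips the balance toward MSTD. This is precisely the regime $6 \le m \le c_j\sqrt n$ of the paper's main theorem; the factor $\sqrt{111+5j}$ should reflect that the $j$ elements of order $\le 2$ in $G$ satisfy $g=-g$ and therefore create extra forced collisions that the constant must absorb.

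The main obstacle is the full range of $m$. Beyond the birthday threshold the collision counts are no longer small, the first-moment estimates lose control, and one would need sharp higher-moment or concentration arguments — or a genuine combinatorial injection — to preserve the inequality. Lemma~\ref{>n/2 prelim} disposes of the very large classes (where $|R|$ or $|F|$ exceeds $n/2$) by forcing sum-difference balance, and the small classes $m \le 5$ must be handled directly as in the $m=2,3$ cases. The stubborn gap is the intermediate window $c_j\sqrt n < m \lesssim n$, where neither the fringe-collision heuristic nor direct enumeration is available; closing that window is what keeps the statement a conjecture rather than a theorem.
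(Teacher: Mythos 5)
You have not produced a proof, and no proof exists to compare against: the statement you were given is Conjecture~\ref{conj:gendih-mmstdtmdts}, which the paper leaves open. What you have written is an accurate roadmap of the paper's partial-progress program --- reduce to the size classes $\Sd{m}$ (Conjecture~\ref{conj:SDm-mmstdtmdts}), refine by the flip count $k$, use the decomposition of $A+A$ and $A-A$ to locate the flip surplus $(-R+F)\setminus(R+F)$, and run a first-moment collision count below the birthday threshold to get Theorem~\ref{smallm} with the constant $c_j = 1.3229/\sqrt{111+5j}$ --- and you correctly identify both why the hypothesis excludes elementary abelian $2$-groups (there $-R = R$, so $D$ is abelian of exponent $2$ and every subset is balanced) and where the argument dies: the window $c_j\sqrt{n} < m \lesssim n$. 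Since you concede that window remains open, your proposal establishes the same partial results as the paper and nothing more; it is a plan for the conjecture, not a proof of it.

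Two substantive inaccuracies in the plan are worth fixing. First, Lemma~\ref{>n/2 prelim} does not force sum-difference balance when $\max(|R|,|F|) > n/2$; it only shows $R_D \subseteq A+A$, hence that $A$ cannot be MDTS (such sets can still be MSTD). Balance $A+A = A-A = D$ is only forced when $m > n$, by the separate lemma in Section~\ref{m large}, and for $c_j\sqrt{n} < m \le n$ the size classes are not disposed of, since they contain sets with both $|R|, |F| \le n/2$. Second, your fallback of showing $\E\bigl[|A+A| - |A-A|\bigr] \ge 0$ over uniform $A \in \Sd{m}$ would not suffice even where it is provable: a positive expectation is compatible with MDTS sets outnumbering MSTD sets, since a small number of sets with a large sum surplus can carry the mean. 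This is exactly why the paper does not argue via expectations in Section~\ref{collision analysis}, but instead applies Markov's inequality to the collision count $X_A$ to show that a \emph{majority} of sets in $\Sd{m}$ are MSTD for $6 \le m \le c_j\sqrt{n}$; its expectation computations (Theorem~\ref{thm:expectation_dif_prime}) are explicitly flagged as needing variance control before they could count sets. Also note that for $m \le 5$ the paper's direct casework (Lemmas~\ref{S2n2} and~\ref{S2n3}) covers only $m = 2, 3$ and only the classical dihedral group $D_{2n}$, so even the small cases of the generalized conjecture are not fully settled, contrary to what ``handled directly as in the $m=2,3$ cases'' suggests.
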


Conjecture~\ref{conj:dih-mmstdtmdts} is a special case of Conjecture~\ref{conj:gendih-mmstdtmdts}, with $G=\Z_n$. We also state Conjecture~\ref{conj:SDm-mmstdtmdts}, analogous to Conjecture~\ref{conj:S2nm-mmstdtmdts}.

\begin{conj}\label{conj:SDm-mmstdtmdts}
    Let $D$ be a generalized dihedral group of size $2n$, and let $\mathcal S_{D,m}$ denote the collection of subsets of $D$ of size $m$. For any $m\leq 2n$, $\mathcal S_{D,m}$ has at least as many MSTD sets as MDTS sets.
\end{conj}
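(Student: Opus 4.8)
The plan is to split each subset $A\subseteq D=\Z_2\ltimes G$ as $A=R\sqcup F$, where $R\subseteq G$ collects the rotation coordinates $(0,g)\in A$ and $F\subseteq G$ the flip coordinates $(1,g)\in A$. Using the group law $(\epsilon_1,g_1)(\epsilon_2,g_2)=(\epsilon_1+\epsilon_2,\,g_1+(-1)^{\epsilon_1}g_2)$ and separating each product into its rotation and flip parts gives
\begin{align}
A+A\ &=\ \big[(R+R)\cup(F-F)\big]\ \sqcup\ \big[(R+F)\cup(F-R)\big],\\
A-A\ &=\ \big[(R-R)\cup(F-F)\big]\ \sqcup\ \big[\,R+F\,\big],
\end{align}
so that
\begin{equation}
|A+A|-|A-A|\ =\ \underbrace{\big|(F-R)\setminus(R+F)\big|}_{\ge\,0}\ +\ \Big(\big|(R+R)\setminus(F-F)\big|-\big|(R-R)\setminus(F-F)\big|\Big).
\end{equation}
The flip term is always nonnegative and so can only favor MSTD; every MDTS set must have its rotational term drive the total below zero. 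This reduces the conjecture to a competition, within each size class, between the flip surplus and the rotational deficit.

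The range $6\le m\le c_j\sqrt{n}$ is the collision-free regime: for $m\ll\sqrt{n}$ the expected number of coincidences among the $O(m^2)$ pairwise sums and differences of a random size-$m$ set is $o(1)$, so with high probability all of them are distinct. For such a generic set the sizes $|R+R|$, $|R-R|$, $|F-F|$, $|F-R|$ are pinned down by the pair $(r,f)=(|R|,|F|)$ alone, and the identity above collapses to a deterministic inequality: with $f\ge 1$ one finds $|A+A|-|A-A|=rf+\tfrac{1}{2}r(3-r)$, so a generic set is MSTD precisely when $f>\tfrac{1}{2}(r-3)$, that is, when its split is sufficiently flip-heavy. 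Thus in this regime the MSTD/MDTS status of almost every set is determined by its split.

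I would then count. The number of size-$m$ sets with split $(r,m-r)$ is $\binom{n}{r}\binom{n}{m-r}$, which is symmetric under the exchange $r\leftrightarrow m-r$ of rotations and flips. The balanced split $r=f=m/2$ satisfies $rf\approx m^2/4>m^2/8\approx\tfrac12 r(r-3)$ and is therefore MSTD, and more generally the threshold between MSTD and MDTS lies at $r\approx \tfrac{2}{3}m$, strictly above $m/2$. Because the class sizes are symmetric about $m/2$ while the MSTD region $\{r\lesssim \tfrac23 m\}$ extends past $m/2$, the generic MSTD sets outnumber the generic MDTS sets by a positive margin. It remains to control the non-generic sets: bounding the expected number of overlapping sums (of order $m^4/n$ per class, with an extra forced contribution from the $j$ elements of $G$ of order at most $2$, whose doubles collapse onto $0$ and shrink $R+R$ in the MDTS direction) shows these exceptional sets are too few to erase the margin exactly when $m\le c_j\sqrt{n}$ with $c_j=1.3229/\sqrt{111+5j}$; the decreasing dependence of $c_j$ on $j$ is precisely this self-inverse effect. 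The tiny cases $m\in\{2,3\}$ are verified directly as in \cite{haviland2020more}, and every set with $m>n$ is sum-difference balanced by the analog of Lemma~\ref{>n/2 prelim}.

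The principal obstacle, and the reason the full conjecture is out of reach, is the intermediate window $c_j\sqrt{n}<m\le n$. There the expected number of overlapping sums is $\gg 1$, so coincidences are typical rather than rare; the MSTD/MDTS status no longer depends only on the split, and the collision error term swamps the generic margin. A proof valid for all $m$ would seem to demand a sign-reversing injection from MDTS sets to MSTD sets acting on the rotational term, converting a dominance of $(R-R)\setminus(F-F)$ into its reverse. No canonical such involution is known—this is the same obstruction that already makes MSTD sets in $\Z$ surprising—so without new structural input the method is confined to the regime where MDTS sets can be shown to be genuinely scarce.
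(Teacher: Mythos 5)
First, a point of logic: the statement you are addressing is Conjecture~\ref{conj:SDm-mmstdtmdts}, which the paper does not prove. The paper establishes it only for $6\le m\le c_j\sqrt n$ (Theorem~\ref{smallm}), for $m\in\{2,3\}$ in the classical case $D_{2n}$ (Lemmas~\ref{S2n2} and~\ref{S2n3}; the generalized case is only asserted to ``likely follow''), and for $m>n$ (where every set is balanced). Your proposal covers exactly these regimes by essentially the same route --- the rotation/flip decomposition (your displayed identity for $|A+A|-|A-A|$ is correct), the naive count $rf+\tfrac12 r(3-r)$, the threshold $f>\tfrac12(r-3)$, i.e.\ $k>m/3-1$, and the collision budget with the constant $c_j=1.3229/\sqrt{111+5j}$ --- and then, correctly, concedes that the window $c_j\sqrt n<m\le n$ (and, you should add, $m\in\{4,5\}$) remains open. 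So this is not a proof of the statement, and cannot be judged as one; it is a reconstruction of the paper's partial progress together with an honest statement of the obstruction.

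Within that reconstruction, however, one step is genuinely wrong and would not survive being made precise: the claim that $6\le m\le c_j\sqrt n$ is a ``collision-free regime'' in which the expected number of coincidences is $o(1)$, so that with high probability all pairwise sums and differences are distinct and the sizes $|R+R|$, $|R-R|$, $|F-F|$, $|F-R|$ are pinned down by the split $(r,f)$ alone. The expected number of non-redundant collisions is of order $m^4/n$ (Lemma~\ref{1overn}); at $m=c\sqrt n$ this is of order $n$, which tends to infinity, so collisions are typical throughout the entire claimed range, and genuine collision-freeness holds only for $m=o(n^{1/4})$. An argument that literally needs all sums distinct therefore reaches only $m=o(n^{1/4})$, a strictly weaker conclusion than Theorem~\ref{smallm}. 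The paper's mechanism is quantitative rather than qualitative: Lemma~\ref{1overn} gives $\E[X_A]\le c_2\,m^4/n$ with $c_2=(111+5j)/288$; Markov's inequality gives $X_A\le 12c_1m^2$ (with $c_1=7/1152$) for at least $11/12$ of sets; and since the margin in the naive inequality is itself of order $m^2$, the perturbed condition $3k^2-4mk+m^2+2X_A\le 0$ of Equation~\eqref{withcol} still holds on the interval $5m/12\le k\le 11m/12$, which captures more than $3/5$ of all splits once $m\ge 6$ (Equation~\eqref{5m/12}). Collisions are never shown to be absent, only to be a small enough constant fraction of $m^2$ not to erase the margin; accordingly, ``the MSTD/MDTS status of almost every set is determined by its split'' is also too strong --- the conclusion is only secured for splits comfortably inside the MSTD region. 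Your closing paragraph on the obstruction for $m\gg\sqrt n$ is accurate, but the reason the method stops at $\sqrt n$ is this budget comparison ($m^4/n$ versus $m^2$), not the onset of collisions, which occurs far earlier.
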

The version of Lemma \ref{>n/2 prelim} that we prove in Section \ref{direct analysis} deals with the generalized dihedral group.

In Section \ref{collision analysis}, we prove our main theorem, verifying Conjecture~\ref{conj:SDm-mmstdtmdts} for the case of $m \leq c_j\sqrt{n}$, where $c_j$ is a constant (independent of $n$) depending only on the quantity $j$, the number of elements of order at most $2$ in the abelian group $G$. More explicitly, we show the following.

\begin{restatable}{thm}{smallm}\label{smallm}
Let $D = \Z_2\ltimes G$ be a generalized dihedral group of size $2n$. Let $\Sd{m}$ denote the collection of subsets of $D$ of size $m$, and let $j$ denote the number of elements in $G$ with order at most $2$. If $6\leq m\leq c_j \sqrt{n}$, where $c_j= 1.3229 / \sqrt{111+5j}$, then there are more MSTD sets than MDTS sets in $\Sd{m}$.

\end{restatable}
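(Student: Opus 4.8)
The plan is to fix the number of rotations and flips and reduce the MSTD/MDTS dichotomy to the sign of a single signed quantity. Write $A = R \sqcup F$, where $R \subseteq G$ is the set of rotations and $F$ (identified with its $G$-component) is the set of flips, with $|R| = k$ and $|F| = \ell = m-k$. First I would record the splitting of $A+A$ and $A-A$ into their rotation and flip cosets and extract the exact identity
\[
|A+A| - |A-A| \;=\; |(F-R)\setminus(R+F)| \;+\; |(R+R)\setminus(F-F)| \;-\; |(R-R)\setminus(F-F)|,
\]
which holds because $F-F$ (in the rotation coset) and $R+F$ (in the flip coset) are common to both $A+A$ and $A-A$. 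Thus $A$ is MSTD, balanced, or MDTS according to the sign of the right-hand side, and the first term is always nonnegative: only the competition between $(R+R)\setminus(F-F)$ and $(R-R)\setminus(F-F)$ can drive a set to be MDTS.

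Next I would compute the \emph{generic} value of this quantity, namely the value attained when the five sets $R\pm R$, $F-F$, $R\pm F$ are as large as possible and overlap only as forced (for instance, only at the common element $0 \in (R-R)\cap(F-F)$). For $\ell \geq 1$ this generic gap is $g(k,\ell) = k\ell - \tfrac12 k(k-3)$, while for $\ell = 0$ it is $-\binom{k-1}{2}$. Hence a generic set is MSTD exactly when $k \leq 2\ell+2$ (at most roughly two-thirds rotations) and MDTS when rotations dominate. This already explains the phenomenon: pure-rotation sets behave like the classical abelian MSTD problem and lean MDTS, but they are comparatively rare, whereas the bulk of sets, with $k \approx \ell$, lean MSTD.

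The heart of the argument is controlling deviations from $g(k,\ell)$ caused by collisions. For a uniformly random $A$ with $k$ rotations and $\ell$ flips, let $Z(A)$ count the total number of overlapping sums: the internal coincidences within each of $R+R,\,R-R,\,F-F,\,R+F,\,F-R$, together with the cross-overlaps between $R+R$ and $F-F$, between $R-R$ and $F-F$ beyond the forced $0$, and between $F-R$ and $R+F$. Each contributing event is the solvability of a single linear equation among at most four chosen elements of $G$, so by linearity of expectation $\E[Z]$ is a sum of terms (number of index-tuples) $\times$ (probability the equation holds); crucially, equations such as $x+y=0$ or $x=-x$ acquire extra solutions precisely at the $j$ elements of order at most $2$, which is where the dependence on $j$ enters, and I expect a bound of the shape $\E[Z] \leq (a+bj)\,m^4/n$. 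Since the deviation of $|A+A|-|A-A|$ from $g(k,\ell)$ is at most $Z(A)$, Markov's inequality bounds the number of misclassified sets in each class by $\E[Z]\,N_{k,\ell}/g(k,\ell)$, where $N_{k,\ell} = \binom{n}{k}\binom{n}{\ell}$.

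Finally I would sum over classes. On the MSTD side ($g(k,\ell) \geq 1$) each class contributes a surplus of at least $N_{k,\ell}\bigl(1 - 2\E[Z]/g(k,\ell)\bigr)$, while on the MDTS side ($g(k,\ell) \leq 0$) I can afford only to discard $N_{k,\ell}$. Using $\sum_k N_{k,\ell} = \binom{2n}{m}$ (Vandermonde) and the concentration of the hypergeometric weights $N_{k,\ell}/\binom{2n}{m}$ near $k=m/2$, behaving like $\mathrm{Bin}(m,1/2)$, the rotation-heavy MDTS region carries only a controlled fraction of the total mass. The main obstacle is making this global balance quantitative and uniform: near the boundary $k \approx 2m/3$ the gap $g(k,\ell)$ is small, so the Markov estimate $2\E[Z]/g$ degrades and the borderline classes must be absorbed by the binomial tail bound rather than treated classwise. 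Optimizing the trade-off between the collision bound $(a+bj)\,m^4/n$ and the tail estimate in the worst-case class is exactly what forces $m \leq c_j\sqrt{n}$ with $c_j = 1.3229/\sqrt{111+5j}$, and also why the cases $m < 6$ are excluded. Assembling these pieces yields $\#\mathrm{MSTD} > \#\mathrm{MDTS}$ in $\Sd{m}$.
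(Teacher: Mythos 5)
Your proposal is correct and takes essentially the same approach as the paper: your generic gap $g(k,\ell)=k\ell-\tfrac{1}{2}k(k-3)$ is precisely the paper's naive count in Equation \eqref{nocol}, your overlap count $Z(A)$ with $\E[Z]\leq (a+bj)\,m^4/n$ is the paper's collision bound (Lemma \ref{1overn}), and your Markov-plus-hypergeometric-concentration endgame mirrors the paper's argument, differing only organizationally in that you apply Markov classwise in $(k,\ell)$ rather than globally with a conditional-probability adjustment. One harmless notational slip: since $F-R=R+F$ in a generalized dihedral group, the first term of your identity should read $|(F+R)\setminus(R+F)|$.
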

See Section \ref{collision analysis} the proof of this result and two related theorems.
We also extend these results to the dihedral group on finitely generated abelian groups in Section \ref{fingenabgroups}.

Next, in Section \ref{expectation section}, we discuss the following result about the expected size of $|A-A|$ when $A$ is a randomly chosen set in $\S{m}$, the collection of subsets of $D_{2n}$ of size $m$.

\begin{restatable}{thm}{expectationdifprime}\label{thm:expectation_dif_prime}
    If $n$ is prime, and $A$ is chosen uniformly at random from $\S{m}$, then
    \begin{equation}
        \E[|A-A|]\ =\ 2n-\frac{nm2^m{\binom{n}{m}}+2n(n-1){\binom{n-m-1}{m-1}}}{m{\binom{2n}{m}}} -\frac{n^2(n-1)}{{\binom{2n}{m}}}\sum_{k=1}^{m-1}\frac{{\binom{n+k-m-1}{m-k-1}}{\binom{n-k-1}{k-1}}}{k(m-k)}.
    \end{equation}
\end{restatable}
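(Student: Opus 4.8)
The plan is to compute the expected number of group elements \emph{missing} from $A-A$ and subtract from $2n$. Write $A = R\cup F$, identifying $R$ (rotations) and $F$ (flips) with subsets of $\Z_n$. The group law (as recorded in the excerpt) shows that the rotations appearing in $A-A$ are exactly $(R-R)\cup(F-F)$ — the set $F-F$, symmetric, being the flip$\times$flip contribution written $F+F$ above — while the flips appearing are exactly $R+F$. By linearity of expectation,
\[
\E[|A-A|]\ =\ 2n-\sum_{t\in\Z_n}\P\big[t\notin (R-R)\cup(F-F)\big]-\sum_{t\in\Z_n}\P\big[t\notin R+F\big].
\]
Throughout I would condition on $k=|R|$ (so $|F|=m-k$), under which $R$ and $F$ are independent uniform subsets of $\Z_n$ of sizes $k$ and $m-k$, the value $k$ carrying weight $\binom{n}{k}\binom{n}{m-k}\big/\binom{2n}{m}$ by Vandermonde.

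The flip sum is the easier piece and uses no primality. Since $t\in R+F$ iff $R\cap(t-F)\neq\emptyset$, and $t-F$ is again a uniform $(m-k)$-subset for every $t$, one gets $\P[t\notin R+F\mid k]=\binom{n-m+k}{k}\big/\binom{n}{k}$, independent of $t$. Summing over the $n$ values of $t$ and over $k$, then applying the subset-of-a-subset identity $\binom{n}{\ell}\binom{n-\ell}{m-\ell}=\binom{n}{m}\binom{m}{\ell}$ and $\sum_\ell\binom{m}{\ell}=2^m$, collapses the flip contribution to $n\cdot 2^m\binom{n}{m}\big/\binom{2n}{m}$, which is the first summand inside the bracketed fraction of the claim.

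The rotation sum is where primality enters. The term $t=0$ contributes nothing, since $0\in R-R$ whenever $R\neq\emptyset$. For $t\neq 0$, the event $\{t\notin B-B\}$ says no two elements of $B$ differ by $t$, i.e.\ $B$ is an independent set in the Cayley graph $\mathrm{Cay}(\Z_n,\{\pm t\})$. The crux is that, $n$ being prime, every nonzero $t$ generates $\Z_n$, so this graph is a single $n$-cycle $C_n$, and the number of size-$b$ independent sets equals the classical circular non-consecutive count $\frac{n}{n-b}\binom{n-b}{b}=\frac{n}{b}\binom{n-b-1}{b-1}=:N(b)$, \emph{the same for every} $t\neq0$. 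By independence of $R$ and $F$, $\P[t\notin(R-R)\cup(F-F)\mid k]=N(k)N(m-k)\big/\big(\binom{n}{k}\binom{n}{m-k}\big)$, so the rotation contribution is $(n-1)\sum_k N(k)N(m-k)\big/\binom{2n}{m}$. Separating the boundary terms $k\in\{0,m\}$, which give $2N(m)=\frac{2n}{m}\binom{n-m-1}{m-1}$, and expanding $N(k)N(m-k)$ for $1\le k\le m-1$ produces the remaining two terms of the stated formula.

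I expect the main obstacle to be the rotation part, in two respects: first, the honest justification that for prime $n$ the graph $\mathrm{Cay}(\Z_n,\{\pm t\})$ is a single $n$-cycle for \emph{all} $t\neq0$, which is exactly what forces $\P[t\notin R-R]$ to be independent of $t$ and hence lets the sum over $t$ become the clean factor $(n-1)$ (for composite $n$ the graph can break into several cycles or a perfect matching, and this uniformity — and the closed form — fail); and second, recalling/deriving the count $N(b)$ and verifying that its self-convolution $\sum_k N(k)N(m-k)$, once the $k\in\{0,m\}$ boundary terms are peeled off, simplifies precisely to the claimed double expression. The flip computation, by contrast, is routine once the subset identity is invoked.
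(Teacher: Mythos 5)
Your proposal is correct and follows essentially the same route as the paper's proof: complementary counting via $\E[|A-A|]=2n-\sum_{g}\P[g\notin A-A]$, the identification of the rotations of $A-A$ with $(R-R)\cup(F-F)$ and the flips with $R+F$ in $\Z_n$, conditioning on the rotation/flip split with hypergeometric weights, the circular non-consecutive count $\frac{n}{b}\binom{n-b-1}{b-1}$ (the paper's $g(n,b)$, which it derives for general $n$ via cosets of $\langle i\rangle$ and specializes to prime $n$ exactly where your Cayley-cycle argument lives), and the identity $\sum_{k}\binom{n}{k}\binom{n-k}{m-k}=2^m\binom{n}{m}$ collapsing the flip contribution. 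The only nitpick is trivial: for $t=0$ you should note $0\in F-F$ when $R=\emptyset$ (not just $0\in R-R$ when $R\neq\emptyset$), which holds since $A$ is nonempty.
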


Finally, in Section \ref{future work}, we discuss directions for further research.

%%%%%%%%%%%%%%%%%%%%%%%%%%%%%%%%%%%%%%%%%%%%%%%%%%%%%%%%%%%%%%%%%%
%\subsection{Previous Results}

%%%%%%%%%%%%%%%%%%%%%%%%%%%%%%%%%%%%%%%%%%%%%%%%%%%%%%%%%%%%%%%%%%
%\subsection{Results}

%%%%%%%%%%%%%%%%%%%%%%%%%%%%%%%%%%%%%%%%%%%%%%%%%%%%%%%%%%%%%%%%%%
%%%%%%%%%%%%%%%%%%%%%%%%%%%%%%%%%%%%%%%%%%%%%%%%%%%%%%%%%%%%%%%%%%
\section{Direct Analysis}\label{direct analysis}

%%%%%%%%%%%%%%%%%%%%%%%%%%%%%%%%%%%%%%%%%%%%%%%%%%%%%%%%%%%%%%%%%%
\subsection{Small Subsets}
For the case of the usual dihedral group $D_{2n}$, we have the following two results.

\begin{restatable}[\cite{haviland2020more}]{lem}{stwontwo}\label{S2n2}
Let $n \geq 3$, and let $\mathcal S_{2n, 2}$ denote the collection of subsets of $D_{2n}$ of size $2$. Then, $\mathcal{S}_{2n,2}$ has strictly more MSTD sets than MDTS sets.
\end{restatable}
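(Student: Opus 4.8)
The plan is to enumerate the size-$2$ subsets of $D_{2n}$ directly, splitting into cases according to how many of the two elements are rotations versus flips, and to count MSTD and MDTS sets in each case. Write $A=\{x,y\}$ with $x\neq y$. For any such $A$, the sumset $A+A=\{x^2,y^2,xy,yx\}$ and the difference set $A-A=\{e,xy^{-1},yx^{-1}\}$ (where $e$ is the identity, arising from $xx^{-1}=yy^{-1}$). The key observation I would exploit is that $|A-A|$ is easy to pin down: since $yx^{-1}=(xy^{-1})^{-1}$, the difference set has size $3$ unless $xy^{-1}$ is its own inverse (i.e. an involution), in which case it has size $2$. So the difference-set size is governed entirely by whether $xy^{-1}$ has order $\le 2$.

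Next I would compute $|A+A|$ case by case. There are three cases: (i) both elements rotations, $A=\{r^a,r^b\}$; (ii) both flips, $A=\{r^as,r^bs\}$; (iii) one of each, $A=\{r^a,r^bs\}$. In case (i) the group elements all commute, so $xy=yx$ and $|A+A|=|\{r^{2a},r^{2b},r^{a+b}\}|$, which mirrors the difference-set computation and makes these sets balanced or MDTS — never MSTD. In case (ii) one computes $x^2=y^2=e$, so $A+A=\{e,r^{a-b},r^{b-a}\}$ and $A-A=\{e,r^{a-b},r^{b-a}\}$ as well, giving balance. The decisive case is (iii), the mixed case: here $x^2=r^{2a}$, $y^2=e$, $xy=r^{a+b}s$ and $yx=r^{b-a}s$, and these two flips are distinct precisely when $2a\neq 0$, i.e. when the rotation $r^a$ is not an involution. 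Meanwhile $xy^{-1}=r^ar^{b}s\cdots$ — I would carefully verify that in the mixed case $xy^{-1}$ is always a flip, hence always an involution, forcing $|A-A|=2$. Comparing, $|A+A|$ will be $3$ or $4$ against $|A-A|=2$, so mixed sets are MSTD whenever $r^a$ is not an involution.

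The counting then reduces to tallying: every mixed set with a non-involution rotation is MSTD, the pure-rotation sets supply the MDTS (and balanced) sets, and the pure-flip sets are all balanced. I would count the mixed MSTD sets (roughly $n(n-1)$ of them, adjusting for the at most two involutive rotations) against the MDTS pure-rotation sets, which are far fewer since a pair $\{r^a,r^b\}$ is MDTS only when its sumset genuinely collapses relative to its difference set — a condition tied to additive coincidences modulo $n$. A clean way to organize this is to observe that for each pure-rotation pair the sum and difference computations are structurally identical up to the sign asymmetry, so MDTS among pure rotations can only come from the sporadic coincidence $r^{a+b}\in\{r^{2a},r^{2b}\}$ while the difference set stays size $3$; counting these gives $O(n)$ sets, dominated by the $\Theta(n^2)$ mixed MSTD sets.

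The main obstacle I anticipate is not conceptual but bookkeeping precision: correctly handling the small number of degenerate rotations (those $r^a$ with $2a\equiv 0 \bmod n$, of which there are one or two depending on the parity of $n$) in both the sum and difference computations, and making sure the edge cases where $A+A$ or $A-A$ collapses are attributed to the right category so that the final inequality is \emph{strict}. I would finish by assembling the case counts into a single inequality, confirming that the MSTD total strictly exceeds the MDTS total for all $n\ge 3$, with the parity of $n$ entering only through the count of involutive rotations.
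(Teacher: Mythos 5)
Your proposal follows essentially the same route as the paper's proof: split the size-$2$ sets into the three cases (two flips, two rotations, one of each), compute the sum and difference sets explicitly, and observe that the mixed sets $\{r^a, r^b s\}$ with $2a \not\equiv 0 \pmod{n}$ are MSTD while everything else contributes nothing against them. Your computations in the mixed and pure-flip cases are correct and agree with the paper's.

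One correction is needed in the pure-rotation case, and it is worth making because your concluding inequality depends on it. You allow for MDTS sets arising from the coincidence $r^{a+b} \in \{r^{2a}, r^{2b}\}$ with the difference set staying at size $3$, and you estimate these at $O(n)$. In fact this coincidence is impossible: $r^{a+b} = r^{2a}$ forces $b \equiv a \pmod{n}$, contradicting $a \neq b$. Hence the only possible collapse in $\{r^{2a}, r^{2b}, r^{a+b}\}$ is $r^{2a} = r^{2b}$, i.e.\ $2(a-b) \equiv 0 \pmod{n}$, which is exactly the condition for the collapse $r^{a-b} = r^{b-a}$ in the difference set; so every pure-rotation pair is balanced and the MDTS count is exactly zero, which is what the paper proves. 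This sharpening matters for strictness at all $n \geq 3$: a comparison of ``$\Theta(n^2)$ MSTD versus $O(n)$ MDTS'' does not by itself settle small $n$ such as $n = 3$ or $4$, whereas with zero MDTS sets it suffices to exhibit a single MSTD set, e.g.\ $A = \{r, s\}$, for which $|A+A| = 4 > 2 = |A-A|$ whenever $n \geq 3$.
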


\begin{restatable}{lem}{stwonthree}\label{S2n3}
Let $n \geq 3$, and let $\mathcal S_{2n, 3}$ denote the collection of subsets of $D_{2n}$ of size $3$. Then, $\mathcal{S}_{2n,3}$ has strictly more MSTD sets than MDTS sets.
\end{restatable}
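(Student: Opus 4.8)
The plan is to partition $\mathcal S_{2n,3}$ according to the pair $(|R|,|F|)\in\{(3,0),(2,1),(1,2),(0,3)\}$, where $R$ and $F$ are the rotation and flip parts of $A$, identified with their exponent sets in $\Z_n$. Throughout I use the decompositions from the introduction: the rotation part of $A+A$ is $(R+R)\cup(F+F)$ and its flip part is $(R+F)\cup(-R+F)$, while the rotation part of $A-A$ is $(R-R)\cup(F+F)$ and its flip part is just $R+F$. I will show that all MDTS sets occur in the all-rotation type $(3,0)$, that the mixed types are never MDTS, and that already the MSTD sets of type $(2,1)$ outnumber every set of type $(3,0)$. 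For the extreme types this is quick: in type $(0,3)$ both rotation parts equal $F+F$ and both flip parts are empty, so $A+A=A-A$ and all-flip sets are balanced; in type $(3,0)$ the set reduces to a three-element subset of $\Z_n$ compared through $R+R$ versus $R-R$, and although such sets may be MSTD, MDTS, or balanced, the number of MDTS ones is at most the total $\binom n3$, which is the only bound I need.

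The heart of the argument is that mixed sets are never MDTS. For the flip parts this is immediate, since $R+F\subseteq(R+F)\cup(-R+F)$, so the flip part of $A+A$ is at least as large as that of $A-A$ for every $A$. For the rotation parts I use $|R|\le 2$. In type $(1,2)$ one has $R-R=\{0\}\subseteq F+F$, so the rotation part of $A-A$ equals $F+F$, which is contained in $\{2a\}\cup(F+F)$, the rotation part of $A+A$. In type $(2,1)$ one has $F+F=\{0\}$, and writing $R=\{a,b\}$ the set $R-R=\{0,a-b,b-a\}$ has the same cardinality as its translate $\{a+b,2a,2b\}$ by $a+b$, which lies inside $(R+R)\cup\{0\}=\{0,2a,a+b,2b\}$, the rotation part of $A+A$. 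In both cases the rotation part of $A+A$ is at least as large as that of $A-A$, so $|A+A|\ge|A-A|$ and no mixed set is MDTS.

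It remains to count. A type $(2,1)$ set is determined by an unordered pair $\{a,b\}$ of rotation exponents and a flip exponent $c$, giving $n\binom n2$ sets, and it is balanced exactly when both rotation parts and both flip parts coincide. Since the flip condition $\{c-a,c-b\}\subseteq\{a+c,b+c\}$ is independent of $c$, balancing occurs precisely when $a+b=0$, together with the single extra pair $\{0,n/2\}$ when $n$ is even; there are at most $n/2$ such pairs, hence at most $n^2/2$ balanced sets of this type. Thus the number of MSTD sets of type $(2,1)$ is at least $n\binom n2-\frac{n^2}{2}=\frac{n^2(n-2)}{2}$. Comparing with the bound $\binom n3=\frac{n(n-1)(n-2)}{6}$ on the MDTS sets, the inequality $\frac{n^2(n-2)}{2}>\frac{n(n-1)(n-2)}{6}$ reduces, after cancelling the positive factor $n(n-2)$, to $3n>n-1$, valid for every $n\ge 3$. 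Since every MDTS set is of type $(3,0)$ while the MSTD sets of type $(2,1)$ alone already outnumber all sets of type $(3,0)$, there are strictly more MSTD than MDTS sets in $\mathcal S_{2n,3}$, with no separate treatment of small $n$ required.

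The main obstacle is the rotation-part inequality for the mixed types: unlike the flip part it is not a containment and genuinely relies on $|R|\le 2$, since for larger rotation sets the difference set can outgrow the sumset, which is exactly why type $(3,0)$ admits MDTS sets at all. A secondary point needing care is the balanced-set enumeration, as the lower bound on MSTD sets of type $(2,1)$ is valid only if every balanced set is accounted for; this forces one to check that the balancing condition depends solely on the pair $\{a,b\}$ and to list all such pairs.
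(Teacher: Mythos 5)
Your argument is correct, and it diverges from the paper's proof in a genuine way. Both proofs partition $\mathcal{S}_{2n,3}$ by $(|R|,|F|)$, dispose of the all-flip case as balanced, show the two mixed cases are never MDTS, and beat the trivial bound $\binom{n}{3}$ on the number of all-rotation MDTS sets; but the quantitative work lands in different cells. The paper does its enumeration in the $(1,2)$ case (one rotation, two flips), where $A-A\subseteq A+A$, and classifies the balanced sets by solving flip-coincidence equations such as $r^{i+j}s=r^{j-i}s$, which forces casework on $n\bmod 2$ and $n\bmod 4$ and a separate check at $n=3$. You instead count in the $(2,1)$ case: your translation trick --- the rotation part $\{0,a-b,b-a\}$ of $A-A$ has the same cardinality as its translate $\{a+b,2a,2b\}\subseteq\{0,2a,a+b,2b\}$ --- replaces the paper's case split on whether $r^{2i}=r^{2j}$, and your observation that the flip-part condition $\{c-a,c-b\}\subseteq\{a+c,b+c\}$ is independent of $c$ and holds precisely when $a+b\equiv 0 \pmod{n}$ or ($n$ even and $\{a,b\}=\{0,n/2\}$) yields the uniform bound of at most $n^2/2$ balanced sets, hence at least $n\binom{n}{2}-n^2/2=\frac{n^2(n-2)}{2}$ MSTD sets, and the comparison $\frac{n^2(n-2)}{2}>\binom{n}{3}$ reduces to $3n>n-1$ for all $n\geq 3$ with no parity or small-$n$ exceptions. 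The paper's route buys sharper information (exact counts of MSTD sets among two-flip sets, refined by $n\bmod 4$), while yours buys economy: a slicker never-MDTS argument and a parity-free count that suffices for the lemma. One caution on phrasing: ``balanced exactly when both rotation parts and both flip parts coincide'' should be read as equality of \emph{cardinalities} of the respective parts, which is what balancedness amounts to given your one-sided inequalities; as it happens, for the pairs you list the rotation parts coincide as sets as well (when $a+b=0$ both equal $\{0,2a,-2a\}$), so your count of balanced $(2,1)$ sets is in fact exact, though only the upper bound is needed.
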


The proofs for both these lemmas use basic and somewhat tedious casework; they can be found in Appendix \ref{base cases}. 

Similar results for the generalized dihedral group likely follow from similar arguments. 

\subsection{Large Subsets}\label{m large}
We consider what happens when $m$ gets close to $n$. Here we can prove a result for any generalized dihedral group. For the rest of this section, let $G$ be a finite abelian group of size $n$. Recall that the generalized dihedral group is given by $D = \Z_2\ltimes G$, where the nonidentity element of $\Z_2$ acts on $G$ by inversion. Note that $|D|=2n$. Writing an element of the group $D$ as $(z,g)$ where $z\in \{0,1\}$ and $g\in G$, we write $R_D$ to mean the subset of $D$ consisting of elements with $z=0$ and $F_D$ to mean the subset of $D$ consisting of elements with $z=1$. Note that $|R_D|=|F_D|=n$. For the case where $D = D_{2n} = \Z_2\ltimes \Z_n$ is the usual dihedral group, $R_D$ and $F_D$ are the sets of rotations and flips, respectively; out of convenience, we will use these terms for the general case as well.

It turns out that having $m>n$ ensures that $A$ is balanced.  

\begin{lem}\label{>n/2}
Let $D$ be a generalized dihedral group of size $2n$. Let $A\subseteq D$, and let $R = A\cap R_D$ and $F = A\cap F_D$. If $\max(|R|,|F|) > n/2$, then $R_D \subseteq A+A$ and $R_D \subseteq A-A$.

\end{lem}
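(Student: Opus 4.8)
The plan is to reduce the whole statement to a single pigeonhole observation about subsets of $G$. First I would record the multiplication in $D = \Z_2 \ltimes G$ explicitly, writing $G$ additively so that the inversion action is negation: with elements written as pairs $(z,g)$, the product of two rotations or of two flips lands in $R_D$, whereas a rotation times a flip, in either order, lands in $F_D$. Identifying $R = A \cap R_D$ and $F = A \cap F_D$ with their $G$-components (as subsets of $G$), a direct computation of the four product types shows that the rotation part of $A+A$ is $(R+R)\cup(F-F)$ and the rotation part of $A-A$ is $(R-R)\cup(F-F)$, where $+$ and $-$ denote the sumset and difference set inside the abelian group $G$. It is precisely the inversion action that converts the flip$\cdot$flip and flip$\cdot$rotation$^{-1}$ products into differences in $G$. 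Thus, to conclude $R_D \subseteq A+A$ and $R_D \subseteq A-A$, it suffices to exhibit one of these $G$-subsets that already exhausts all of $G$.

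Next I would isolate the elementary fact doing all the work: if $S,T \subseteq G$ satisfy $|S|+|T| > n = |G|$, then $S \cap T \neq \varnothing$, while for any fixed $g \in G$ the translate $g+T$ and the reflected translate $g-T$ each have the same size as $T$. Applying this, suppose $|R| > n/2$. Then for every $g \in G$ the sets $R$ and $g-R$ together have more than $n$ elements, so they meet, yielding $g \in R+R$; likewise $R$ and $g+R$ meet, yielding $g \in R-R$. Hence $R+R = R-R = G$. Symmetrically, if $|F| > n/2$, then $F$ and $g+F$ meet for every $g$, so $F-F = G$.

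Finally I would combine these with a short case split on the hypothesis $\max(|R|,|F|) > n/2$. If $|R| > n/2$, then $R+R = G$ forces $R_D \subseteq A+A$ and $R-R = G$ forces $R_D \subseteq A-A$. If instead $|F| > n/2$, then $F-F = G$ is contained in the rotation part of both $A+A$ and $A-A$, again giving $R_D$ in both. Either way the conclusion follows.

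The only real obstacle here is bookkeeping rather than ideas: one must carefully track how the $\Z_2$-action by inversion interacts with taking inverses, so that the four product types (rotation$\cdot$rotation, flip$\cdot$flip, rotation$\cdot$flip$^{-1}$, flip$\cdot$rotation$^{-1}$) are correctly sorted into sumsets versus difference sets of $G$ without sign errors. Once the rotation parts are identified as above, the covering argument is just the standard observation that two subsets of a size-$n$ group, each of size greater than $n/2$, cannot be disjoint.
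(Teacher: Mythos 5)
Your proposal is correct and is essentially the paper's own argument: identifying the rotation parts of $A+A$ and $A-A$ as $(R+R)\cup(F-F)$ and $(R-R)\cup(F-F)$ inside $G$ and then applying pigeonhole to the translates $g-L$ and $g+L$ is just an additive rewriting of the paper's observation that $L$ must meet $rL^{-1}$ and $rL$, all three being subsets of $L_D$ of size exceeding $n/2$ in a set of size $n$. The only cosmetic difference is that the paper treats $|R|>n/2$ and $|F|>n/2$ uniformly via the larger set $L$, whereas you split into the two cases after first reducing to the abelian group $G$.
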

\begin{proof}
    Let $L$ be the larger of $R$ and $F$, and define $L_D = R_D$ if $L = R$ and $L_D = F_D$ if $L = F$.

    For each rotation $r \in R_D$, define $rL^{-1} = \{r\ell^{-1} \ |\ \ell \in L\}$ and $rL = \{r\ell \ |\ \ell \in L\}$.
    Note that $|rL^{-1}| = |rL| = |L| > n/2$, and $rL^{-1}$, $rL$, $L$ are subsets of the set $L_D$ which has size $n$.
    Hence, by the inclusion--exclusion principle, $L \cap rL^{-1}$ and $L \cap rL$ are nonempty.
    Thus, $r \in L+L \subseteq A+A$ and $r \in L-L \subseteq A-A$.

    Therefore, as desired, $R_D \subseteq A + A$ and $R_D \subseteq A-A$.
\end{proof}

\begin{rmk}
Note Lemma \ref{>n/2} implies if $\max(|R|,|F|) > n/2$, then $A$ is not MDTS. This follows from the discussion after the statement of Conjecture \ref{conj:dih-mmstdtmdts} (which we explicitly extend to the general dihedral group case in Section \ref{collision analysis}). For a set to be MDTS, $R-R$ must contribute rotations to $A-A$ that the set $A+A$ does not have. But here we have shown that if $\max(|R|,|F|) > n/2$, then $A+A$ has all the rotations.
\end{rmk}

\begin{lem}
Let $D$ be a generalized dihedral group of size $2n$, and let $A\subseteq D$ with $|A| = m$. If $m > n$, then $A + A = A - A = D$.
\end{lem}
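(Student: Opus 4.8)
The plan is to show that each of $A+A$ and $A-A$ contains all $n$ rotations $R_D$ and all $n$ flips $F_D$; since $R_D$ and $F_D$ partition $D$ and both $A+A$ and $A-A$ are subsets of $D$, this forces $A+A = A-A = D$. Write $R = A \cap R_D$ and $F = A \cap F_D$, so that $|R| + |F| = m > n$. Because each of $R$ and $F$ has size at most $n$, this already forces both to be nonempty and forces $\max(|R|,|F|) > n/2$.

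The rotations come essentially for free. Since $\max(|R|,|F|) > n/2$, Lemma~\ref{>n/2} immediately yields $R_D \subseteq A+A$ and $R_D \subseteq A-A$, so it remains only to capture all of the flips. For the flips, I would parametrize $R = \{(0,a) : a \in I\}$ and $F = \{(1,b) : b \in J\}$ with $I, J \subseteq G$ and $|I| + |J| = m > n = |G|$ (here I write $G$ additively). A direct computation in the semidirect product gives $(0,a)(1,b) = (1, a+b)$, a flip whose $G$-component ranges over the sumset $I + J = \{a+b : a \in I, b \in J\}$. The key elementary fact is that in a finite abelian group, $|I| + |J| > |G|$ forces $I + J = G$: for any $g \in G$, the sets $I$ and $g - J$ have sizes summing to more than $|G|$, so by inclusion--exclusion they intersect, giving $g \in I + J$. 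Applying this shows $F_D \subseteq A+A$.

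The same sumset appears in the difference set because every flip is its own inverse: since $(1,b)^{-1} = (1,b)$, one has $(0,a)(1,b)^{-1} = (0,a)(1,b) = (1,a+b)$, so the flip part of $A-A$ also has $G$-components equal to $I+J = G$, whence $F_D \subseteq A-A$. Combining with the rotation case gives $A+A \supseteq R_D \cup F_D = D$ and likewise $A-A \supseteq R_D \cup F_D = D$, so both equal $D$. The only real care needed is in tracking the semidirect-product multiplication and inverses---verifying that a rotation-times-flip product lands in $F_D$ with $G$-component $a+b$, and that the involution property of flips makes the flip contributions to $A+A$ and $A-A$ coincide---after which the pigeonhole step is routine; this bookkeeping, rather than any genuine difficulty, is the main thing to get right.
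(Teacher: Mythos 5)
Your proof is correct and follows essentially the same route as the paper: both use Lemma~\ref{>n/2} to capture $R_D$, and then a pigeonhole (inclusion--exclusion) argument exploiting $|R|+|F|=m>n$ together with the involution property of flips to capture $F_D$ in both $A+A$ and $A-A$. The only difference is cosmetic --- you run the pigeonhole on the $G$-components (showing $I+J=G$), whereas the paper translates the larger set by each flip $f$ and intersects inside the coset $S_D$ --- but these are the same argument.
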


\begin{proof}

Let $R$ (resp. $F$) be the subset of rotations (resp. flips) in $A$; hence $A = R \cup F$. Let $L, S$ be the larger and smaller of $R$ and $F$, respectively. Define $|L| = n_1, |S| = n_2$ for $n_1 + n_2 = m > n$. Thus, $n_1 > n/2$.

By Lemma \ref{>n/2}, we have that $R_D \subseteq A+A$ and $R_D \subseteq A-A$. 

For each flip $f \in F_D$, define $fL^{-1} = \{f\ell^{-1} \ |\ \ell \in L\}$ and $fL = \{f\ell \ |\ \ell \in L\}$.

Note that $|fL^{-1}| = |fL| = |L| = n_1$, $|S| = n_2$, and $fL^{-1}, fL, S$ are subsets of $S_D$, which has size $n < n_1 + n_2$. Hence, by the inclusion--exclusion principle, $fL^{-1} \cup S$ and $fL \cup S$ are nonempty.
Thus, $f\in S + L \subseteq A+A$ and $f \in S - L \subseteq A - A$.
Therefore, $F_D \subseteq A+A$ and $F_D \subseteq A-A$. 

Thus, we have $R_D, F_D \subseteq A+A$ and $R_D, F_D \subseteq A-A$, which imply $A + A = A - A = D$.
\end{proof}

\section{Collision Analysis}\label{collision analysis}

%%%%%%%%%%%%%%%%%%%%%%%%%%%%%%%%%%%%%%%%%%%%%%%%%%%%%%%%%%%%%%%%%%
Let $G$ be a finite abelian group of size $n$, written multiplicatively. Recall that the generalized dihedral group is given by $D = \Z_2\ltimes G$, where the nonidentity element of $\Z_2$ acts on $G$ by inversion.

This section is dedicated to proving the following.

\smallm*

Note that the theorem is only useful when $c_j\sqrt{n}\geq 6$, or $n\geq (6/c_j)^2$. 

If $n$ is arbitrarily large and $j$ is a constant compared to $n$, we can make a stronger statement: we can replace $c_j$ in the above theorem with a constant arbitrarily close to $\sqrt{2/7} \approx 0.5345$. Specifically, we have the following.

\begin{thm}\label{largen}
For fixed $j$ and $\epsilon>0$, there exists $n_{j,\epsilon}$ with the following property. Let $G$ be an abelian group of size $n \geq n_{j,\epsilon}$ with at most $j$ elements of order $2$ or $1$. Then with $D$ and $\Sd{m}$ defined as in Theorem \ref{smallm}, we have that if $6 \leq m\leq \left(\sqrt{2/7}-\epsilon\right)\sqrt{n}$, then there are more MSTD sets than MDTS sets in $\Sd{m}$.
\end{thm}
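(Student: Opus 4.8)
The plan is to re-run the collision-analysis scheme underlying Theorem \ref{smallm}, but to keep every $n$-dependent term in exact form and take $n\to\infty$ with $j$ fixed, instead of bounding those terms uniformly in $n$ (which is what produces the lossy factor $\sqrt{111+5j}$ in $c_j$). First I would record the generic (collision-free) surplus. Extending the decomposition following Conjecture \ref{conj:dih-mmstdtmdts} to $D=\Z_2\ltimes G$ and writing $k=|R|$, $\ell=|F|=m-k$, one counts the forced coincidences: generically $|R+R|=\binom{k+1}{2}$, $|R-R|=k^2-k+1$, $|F+F|=\ell^2-\ell+1$, and $|R+F|=|{-R}+F|=k\ell$, with the only unavoidable overlap being the identity shared by $R-R$ and $F+F$. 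Hence, absent any accidental collision, $|A+A|-|A-A|$ equals the fixed polynomial $s_0(k,\ell)=k\ell+\tfrac12 k(3-k)$, which is positive precisely when $k<\tfrac{2m}{3}+1$. Because $m\ll n$, the type $k$ obeys a hypergeometric law concentrating around $m/2$ with spread $\Theta(\sqrt m)$, so the overwhelming majority of sets have $s_0>0$ (the bulk $k\approx m/2$ has surplus $s_0\asymp m^2/8$); summing the weights $\binom nk\binom{n}{m-k}$ over $s_0>0$ versus $s_0<0$ already yields a positive structural surplus of order $\binom{2n}{m}$.

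Next I would quantify how accidental collisions erode this surplus. Writing $|A+A|=N_+-C_+$ and $|A-A|=N_--C_-$, with $N_\pm$ the collision-free counts and $C_\pm\ge 0$ the number of elements lost to coincidences, we have $|A+A|-|A-A|=s_0(k,\ell)-(C_+-C_-)$, so $A$ is MDTS exactly when $C_+-C_->s_0(k,\ell)$. The decisive quantity is therefore the signed expected deficit $\E[C_+-C_-]$, not $\E[C_+]$ alone: a birthday-type first/second-moment computation shows each of $\E[C_+]$ and $\E[C_-]$ is of order $m^4/n$ with explicit leading coefficients, and the relevant difference exhibits substantial cancellation, since the difference-set block $R-R$ contributes many more collisions to $C_-$ than the sum-set block $R+R$ contributes to $C_+$, partially offsetting the collisions from the extra flip block $-R+F$. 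Carrying the exact leading coefficients through a second-moment bound on $\#\{C_+-C_->s_0(k,\ell)\}$ and comparing against the structural surplus reduces the statement to a single threshold inequality whose break-even point is exactly $m^2=\tfrac{2}{7}n$; this is the source of the constant $\sqrt{2/7}$. The quantity $j$ enters only here: a set of size $m$ meets an element of order at most $2$ with probability $O(jm/n)=O(j/\sqrt n)$, so for fixed $j$ those sets (which alone disturb the forced-collision structure) contribute negligibly, and the $j$-dependence is pushed entirely into the threshold $n_{j,\epsilon}$ rather than into the constant.

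The step I expect to be the main obstacle is establishing the leading-order asymptotics of the signed deficit $\E[C_+-C_-]$ \emph{uniformly} across all $k\in\{0,\dots,m\}$ and verifying the cancellation that sharpens the constant. The delicate configurations are those with $k$ just below $\tfrac{2m}{3}+1$, where $s_0(k,\ell)$ shrinks to $\Theta(m)$ while the typical collision deficit is $\Theta(m^2)$ at $m\asymp\sqrt n$; there a constant fraction of sets can be flipped, and one must show that their hypergeometric weight is small enough, relative to the robustly-MSTD core near $k\approx m/2$, that the net count still favors MSTD. This requires a careful $k$-by-$k$ accounting of $s_0(k,\ell)$ against the expected deficit, together with tail bounds for the weights $\binom nk\binom{n}{m-k}$, and it is precisely this bookkeeping---rather than the crude uniform-in-$n$ estimates behind $c_j$---that squeezes out the optimal $\sqrt{2/7}$. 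Once the uniform estimate is in hand, choosing $n\ge n_{j,\epsilon}$ large enough to absorb the $O(m/n)$, $O(j/\sqrt n)$, and remaining $o(1)$ corrections into the slack $\epsilon$ finishes the proof.
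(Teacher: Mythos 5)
Your framework---the generic surplus, collision counts of order $m^4/n$, and hypergeometric concentration of $k$ near $m/2$---is the same as the paper's, and your surplus formula $s_0(k,\ell)=k\ell+\tfrac12 k(3-k)$ agrees with Equation \eqref{nocol}. The genuine gap is in the second step, where you locate the source of the constant $\sqrt{2/7}$ in a cancellation between sum-collisions and difference-collisions in the signed deficit $\E[C_+-C_-]$, to be controlled by a second-moment bound. That is not where the constant comes from, and the computation you describe is never carried out. The paper never estimates difference-set collisions at all: it deliberately ignores them (see the Remark following Equation \eqref{withcol2}), since $C_-\ge 0$ only helps MSTD, and it works one-sidedly with $X_A$ (an upper bound for your $C_+$). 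The constant arises as $\sqrt{c_1/c_2}$, where $c_2\to 7/32$ is the leading coefficient of Lemma \ref{1overn} (the $7$ coming from the crude triple count $|T_1|\le (2n)^3-n^3=7n^3$, with the $j$-dependent terms $1/m+5j/(8m^2)$ vanishing for fixed $j$ and $m\to\infty$), and $c_1\to 1/16$ is the bulk surplus $m^2/8$ at $k\approx m/2$ \emph{cut in half by Markov's inequality} (the requirement that $X_A\le m^2/8$ hold with probability at least about $1/2$). Thus $2/7=(1/16)/(7/32)$ is an artifact of the one-sided Markov step, not a break-even point of any exact signed computation. Indeed, if your sharper two-sided analysis were actually executed, it would necessarily land at a \emph{better} constant than $\sqrt{2/7}$: merely replacing Markov by concentration of $X_A$ around its mean $(7/32)m^4/n$ already pushes the threshold to $\sqrt{4/7}\sqrt{n}$, and crediting $C_-$ improves it further. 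So your claim that the cancellation analysis terminates exactly at $m^2=\tfrac27 n$ is internally inconsistent. There is also a technical hole on your route: Markov's inequality does not apply to the signed variable $C_+-C_-$, so you genuinely need variance estimates for both collision counts, uniformly in $k$, and you have not supplied them.

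A secondary misdiagnosis: the ``main obstacle'' you anticipate, a careful $k$-by-$k$ accounting near $k\approx \tfrac{2m}{3}$ where $s_0$ degenerates, does not arise. In the regime left open by Theorem \ref{smallm} one has $m\geq c_j\sqrt n\to\infty$, so the hypergeometric concentration you yourself invoke puts all but an $\epsilon_1$-fraction of sets in the window $[(1/2-\epsilon_1)m,(1/2+\epsilon_1)m]$; the paper simply discards everything outside this window, and inside it Equation \eqref{withcol2} reduces to $X_A\lesssim m^2/8$. The remaining conditional-probability bookkeeping (Equations \eqref{xa stuff} through \eqref{exa2}) is elementary. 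Your treatment of $j$---that for fixed $j$ its contribution is lower order and can be absorbed into $n_{j,\epsilon}$---is correct and matches the paper, but the central quantitative mechanism you propose would not reproduce the theorem as stated without, in effect, redoing the paper's one-sided argument.
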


We can give a stronger, more general statement on the proportion of MSTD sets in $\Sd{m}$ if $m$ is large and also bounded above by a (smaller) constant times $\sqrt{n}$.

\begin{thm}\label{evensmallerm}
Let $D, j$, and $\Sd{m}$ be defined as in Theorem \ref{smallm}. For any $\epsilon > 0$, there exist $m_\epsilon$ and $c_{\epsilon, j}$ such that if $m_\epsilon\leq m \leq c_{\epsilon,j}\sqrt{n}$, the proportion of MSTD sets in $\Sd{m}$ is at least $1-\epsilon$.
\end{thm}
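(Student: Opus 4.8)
The plan is to prove the quantitative statement that for a uniformly random $A\in\Sd{m}$, the probability that $A$ fails to be MSTD is at most $\epsilon$ once $m_\epsilon\le m\le c_{\epsilon,j}\sqrt n$; this is exactly the same scheme that underlies Theorem \ref{smallm} (which may be read as driving $\P[A\text{ not MSTD}]$ below $1/2$), pushed to drive that probability below $\epsilon$. Write $A=R\sqcup F$ with $R=A\cap R_D$, $F=A\cap F_D$, and set $k=|R|$, $\ell=|F|$, so $k+\ell=m$. Extending the dihedral decomposition to $D$, we have $A+A=(R+R)\cup(F+F)\cup(R+F)\cup(-R+F)$ and $A-A=(R-R)\cup(F+F)\cup(R+F)$; since the flips of $A-A$ lie inside the flips of $A+A$, the rotation and flip contributions separate into
\[
|A+A|-|A-A|=\bigl|(-R+F)\setminus(R+F)\bigr|+\Bigl(\bigl|(R+R)\cup(F+F)\bigr|-\bigl|(R-R)\cup(F+F)\bigr|\Bigr).
\]
I would write $|A+A|=M^+-C^+$ and $|A-A|=M^--C^-$, where $M^\pm$ are the collision-free maxima, which depend only on $k$ and $\ell$, and $C^\pm\ge0$ count all accidental coincidences among the relevant products (these absorb every coincidence, including those forced by the $2$-torsion of $G$). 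Then $|A+A|-|A-A|=(M^+-M^-)-(C^+-C^-)\ge (M^+-M^-)-C$ with $C:=C^++C^-$, so it suffices to show that with probability at least $1-\epsilon$ the deterministic surplus $M^+-M^-$ strictly exceeds the total collision count $C$.

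A direct count of the collision-free sizes gives $M^+-M^-=k\ell-\tfrac12 k(k-3)=k\bigl(m-\tfrac32 k+\tfrac32\bigr)$. This equals $\tfrac18 m^2(1+o(1))$ at the balanced split $k\approx\ell\approx m/2$, and it stays at least $c_0m^2$ for a fixed $c_0>0$ on the window $|k-m/2|\le \eta m$ provided $\eta<1/6$ (so that $k$ is bounded away from $2m/3$, where the surplus would drop to order $m$, and away from $0$, where there would be too few rotations to create the excess flips). Since $k$ is hypergeometric with mean $m/2$, concentration gives $\P[\,|k-m/2|>\eta m\,]\le 2e^{-c\eta^2 m}$, which is at most $\epsilon/2$ once $m\ge m_\epsilon$. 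This is precisely where the largeness of $m$ enters: it forces the split to concentrate (ruling out the rotation-heavy and flip-only sets that are genuinely non-MSTD) and lets the $\Theta(m^2)$ surplus dominate the lower-order $O(m)$ terms, both of which are harmless in Theorem \ref{smallm} but must be squeezed below $\epsilon$ here.

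The heart of the matter is the collision estimate, which I would import from the analysis behind Theorem \ref{smallm}: summing the expected number of coincidences over all collision types --- within $R+R$, within $R-R$, within $F+F$, within each of $R+F$ and $-R+F$, and the cross-overlaps among the rotations and among the flips --- a birthday-type computation yields $\E[C]\le C_1(j)\,m^4/n$ for an explicit constant $C_1(j)$ whose dependence on $j$ comes from the coincidences controlled by the $2$-torsion subgroup of $G$. By Markov's inequality, $\P[C\ge c_0 m^2]\le \E[C]/(c_0 m^2)\le C_1(j)m^2/(c_0 n)\le C_1(j)c_{\epsilon,j}^2/c_0$, so taking $c_{\epsilon,j}=\sqrt{c_0\epsilon/\bigl(2C_1(j)\bigr)}$ forces this to be at most $\epsilon/2$. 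Combining with the split bound, for $m_\epsilon\le m\le c_{\epsilon,j}\sqrt n$ we obtain
\[
\P[A\text{ not MSTD}]\le \P[\,|k-m/2|>\eta m\,]+\P[C\ge c_0 m^2]\le\epsilon,
\]
because off both bad events $|A+A|-|A-A|\ge c_0 m^2-C>0$, so $A$ is MSTD; hence the proportion of MSTD sets in $\Sd{m}$ is at least $1-\epsilon$. The main obstacle is the collision bound itself: one must verify that $\E[C]=O(m^4/n)$ uniformly over all splits (not merely on the balanced window on which it is ultimately applied), isolate the contribution of the $2$-torsion coincidences that determines $C_1(j)$, and confirm that the forced identity and $2$-torsion overlaps folded into $C$ do not inflate $\E[C]$ beyond the leading $m^4/n$ order, so that the clean $c_0m^2$ lower bound on the surplus survives the subtraction.
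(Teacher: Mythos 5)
Your proposal is correct and follows essentially the same route as the paper's own proof: hypergeometric concentration of the rotation/flip split around $m/2$ for $m\geq m_\epsilon$, the expected-collision bound $\E \leq C_1(j)\,m^4/n$ (the paper's Lemma \ref{1overn}), Markov's inequality, and a union/conditioning argument. The only differences are presentational---you compare the surplus $M^+-M^-\geq c_0m^2$ directly against the collision count rather than solving the paper's quadratic \eqref{withcol} for the admissible window of $k$, and you fold the all-flip coincidences into $C$ rather than discarding them as redundant, which is harmless since their expected number is also $O(m^4/n)$.
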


Here $m_\epsilon$ and $c_{\epsilon,j}$ are independent of $n$, but similarly to before, this theorem is only useful when $n\geq (m_\epsilon/c_{\epsilon,j})^2$.

\begin{rmk}
In practice, Theorems \ref{smallm} and \ref{evensmallerm} are most useful if $j$ is essentially a constant compared to $n$. This is indeed the case for the original dihedral group $D_{2n} = \Z_2\ltimes \Z_n$, where $j=1$ when $n$ is odd and $j=2$ when $n$ is even, yielding $c_j \geq 0.12$. The family of original dihedral groups is also a good example of how to use Theorem \ref{largen}: we can apply that theorem with $j=2$ and $\epsilon$ arbitrarily small to get that for large enough dihedral groups, we can get the coefficient of the $\sqrt{n}$ in the theorem to be very close to $\sqrt{2/7}$, which is a significant improvement over $0.12$. 

However, these theorems are not useful when, for example, $G=\Z_2\times \ldots \times \Z_2 \times \Z_3$. Here $G$ does have an element of order at least $3$, so Conjecture \ref{conj:gendih-mmstdtmdts} applies, but we have $j=n/3$, which is too large for Theorems \ref{smallm} and \ref{evensmallerm} to apply to any values of $m$. In fact, if $j\geq n/100$, then $n\leq (6/c_j)^2$, and Theorem \ref{smallm} does not apply to any values of $m$.
\end{rmk}

We first prove Theorem \ref{smallm} here. Then, in Subection \ref{c0.5} we demonstrate Theorems \ref{largen} and \ref{evensmallerm}.

Recall the definitions of $R_D$ and $F_D$ from Section \ref{m large}. Note that any element in $F_D$ has order $2$ in $D$. Furthermore, any element in $R_D$ has order at most $2$ in $D$ if and only if it has order at most $2$ in $G$.

We begin with a set $A$ with size $m$ and count the number of elements in $A+A$ and $A-A$. In this count, we will make a naive assumption: there are no overlaps between sums and differences that we do not expect to overlap. Decompose $A$ into the union of the set of rotations $R = A\cap R_D$ of and the set of flips $F = A\cap F_D$, and define $k=|F|$. We have:
\begin{align}
    A+A\ &=\ (R+F)\cup(F+R)\cup (R+R)\cup (F+F);\nonumber\\
    A-A\ &=\ (R-F)\cup(F-R)\cup (R-R)\cup (F-F)
\end{align}

Consider first the flips in $A+A$ and $A-A$. In $A+A$, these are in $R+F$ and $F+R$. Note that we do not expect a lot of overlap in general; for a rotation $(0,g_1)\in R$ and a flip $(1,g_2)\in F$, $(0,g_1)\cdot(1,g_2)=(1, g_1g_2)$ does not equal $(1,g_2)\cdot(0,g_1)=(1, g_2g_1^{-1})$
unless $g_1$ has order $1$ or $2$ in $G$. On the other hand, for the flips in $A-A$, we have $R-F = F-R$. This is because $(0,g_1)\cdot (1,g_2)^{-1} = (0,g_1)\cdot (1,g_2) = (1,g_1g_2)$, and $(1,g_2)\cdot(0,g_1)^{-1} = (1,g_2)\cdot(0,g_1^{-1}) = (1,g_1g_2)$, so these two are the same. There are $m-k$ rotations and $k$ flips in $A$, so we thus expect the flips to contribute $2(m-k)k$ to $A+A$ but only $(m-k)k$ to $A-A$. 

Next, consider the rotations in $A+A$ and $A-A$. Begin with $F+F$ and $F-F$. Since all flips have order $2$, these are in fact the same set and thus always contribute equally to $A+A$ and to $A-A$. Also note that if $k\neq 0$ (we will treat the $k=0$ case later), the identity $1$ is contained in $F+F$ and $F-F$.

Next consider $R+R$ and $R-R$. Adding rotations is commutative, so we expect $R+R$ to contribute $\binom{m-k}{2} + (m-k)$ to the size of $A+A$, where the $m-k$ term comes from the sum of each rotation in $R$ with itself. On the other hand, in general $g_1g_2^{-1} \neq g_2g_1^{-1}$, so $R-R$ is expected to contribute $2\binom{m-k}{2}$. Here there is no additional $m-k$ term since when $g_1=g_2$, we have $g_1g_2^{-1}=1$, and $1$ was already counted in $A-A$ from $F-F$.

We now put this all together. For $|A+A|>|A-A|$, we need 
\begin{align}\label{nocol}
&2(m-k)k+\binom{m-k}{2}+(m-k)\ >\ (m-k)k+2\binom{m-k}{2},
%\\\iff& (m-k)(k+1)\ >\ \binom{m-k}{2}\ =\ \frac{(m-k)(m-k-1)}{2},
%\\\iff& 2(k+1)\ >\ m-k-1 \text{ and } m\ \neq\ k,
\nonumber\\\iff & k\ >\ m/3-1 \text{ and } m\ \neq\ k,
\nonumber\\\iff & m/3\ \leq\ k\ <\ m.
\end{align}
Note that when $k=m$ the set is necessarily balanced as $F+F=F-F$. Further, one can now see why we may assume $k\neq 0$: when $k$ is smaller than $m/3$, we expect the set to be MDTS, and indeed we will assume this is the case.

To use this naive estimate to prove our theorem, we first formalize our assumption that we have minimal overlaps within the sumset.

% However, when $m/3\leq k < m$, we expect the set to be MSTD. There are of course many more sets $A\in \S{m}$ with $m/3\leq k < m$ than with $k<m/3$, so this seems very promising. However, we must be careful: we made the assumption that there were no overlaps between sums that we do not expect to overlap. We will now treat this assumption very carefully. 

\begin{defn}
%We start by fixing $n>3$ and $3<m\le n$ (we probably don't need $3<m$ but we have already worked out $m=2$ and $m=3$ anyway). 
Let $A\subseteq D$ such that $|A|=m$, and let $q = (a,b,c,d)\in A^4$. We say that $q$ represents a \textbf{collision} if $ab = cd$.
\end{defn}

Every collision that occurs in $A$ has the potential to make $|A+A|$ smaller relative to our naive estimate, unless the quadruple $q$ is of a form that we already took into account.  For example, if $q=(a,b,a,b)$, then this is not a collision we need to count as $ab=ab$ trivially. Similarly, collisions of the form $q=(a,b,b,a)$ with $a$ and $b$ both rotations do not subtract from $|A+A|$ in Equation \eqref{nocol} as we already accounted for commutativity of addition for rotations. And finally, if $q=(a,b,c,d)$ is a collision where $a$, $b$, $c$, and $d$ are all flips, then $q$ does not impact Equation \eqref{nocol} as $F+F=F-F$ do not affect the relative sizes of the sum and difference sets. We refer to these three kinds of quadruples as \textbf{redundant}.

Every non-redundant collision $(a,b,c,d)$ of $A$, together with $(c,d,a,b)$, decreases the size of $A+A$ by at most $1$ from our naive estimate. Let $X_A$ be half the total number of non-redundant collisions of $A$. Then combining the above analysis with Equation \eqref{nocol}, we are guaranteed to have that $A$ is MSTD when

\begin{align}
&2(m-k)k+\binom{m-k}{2}+(m-k) - X_A \ >\  (m-k)k+2\binom{m-k}{2},
\nonumber\\\iff& m/3 + \frac{2X_A}{3(m-k)}\ \leq\ k<m,
\nonumber\\\iff&3k^2-4mk+m^2+2X_A\ \leq\ 0 \text{ (and $k\ <\ m$)}. \label{withcol}
\end{align}
We use the quadratic equation to solve for when the above quantity equals $0$ and obtain $k=(1/6)(4m\pm \sqrt{16m^2-12(m^2+2X_A)}) = (1/3)(2m\pm \sqrt{m^2-6X_A})$. Thus Equation \eqref{withcol} is satisfied when:
\begin{align}\label{withcol2}
    \frac{2m- \sqrt{m^2-6X_A}}{3}\ \leq\ k\ \leq\ \frac{2m+ \sqrt{m^2-6X_A}}{3} \ \ \text{ (and $k\ <\ m$)}.
\end{align}

%\begin{rmk}
%The $k<m$ requirement is redundant if the number of collisions is at least $1$. From here on out we will ignore the additional requirement; it is not a source of concern, especially considering how small a proportion of sets in $\S{m}$ consist of only flips.
%\end{rmk}

\begin{rmk} We are assuming that no ``collisions" of the form $ab^{-1}=cd^{-1}$ happen to lower the size of $A-A$. Because our objective is to guarantee $|A+A|>|A-A|$ for a large proportion of $A$, this assumption still gives a sufficient condition on $X_A$ and $k$.\end{rmk}

One therefore sees that for most values of $k$, when the number of collisions is not too large, $A$ is MSTD. More formally, suppose that $A$ is chosen randomly out of the subsets of $D$ with size $m$. Suppose that the expectation value of $X_A$ is bounded above by $c_1m^2$, where $c_1= 7/1152 \approx 0.006076$. Then the actual value of $X_A$ exceeds $12c_1m^2$ at most $1/12$ of the time by Markov's inequality. Of sets $A$ with $5m/12\leq k\leq 11m/12$, the actual value of $X_A$ exceeds $12c_1m^2$ at most $2/12=1/6$ of the time. Thus when $5m/12\leq k\leq 11m/12$, Equation \eqref{withcol2} is true at least $5/6$ of the time since when $X_A= 12c_1m^2$, the equation reads
\begin{align}\label{withcol3}
    &\frac{2m- \sqrt{m^2-72c_1m^2}}{3}\ \leq\ k\ \leq\ \frac{2m+ \sqrt{m^2-72c_1m^2}}{3},
    \nonumber\\\iff& \frac{2m- \sqrt{9/16\cdot m^2}}{3}\ \leq\  k\ \leq\ \frac{2m+ \sqrt{9/16\cdot m^2}}{3},
    \nonumber\\\iff& \frac{5m}{12}\ \leq\ k\ \leq\ \frac{11m}{12}.
\end{align}

Now, we need to make sure that for our values of $m$, more than $(1/2)/(5/6)=3/5$ proportion of sets in $\Sd{m}$ have $5m/12\leq k\leq 11m/12$. This will ensure that a proportion greater than $1/2$ of sets in $\Sd{m}$ satisfy Equation \eqref{withcol2} and are therefore MSTD.

More formally speaking, we require
\begin{align}\label{5m/12}
&\lim_{n\to\infty} \frac{\sum_{k=\ceil{5m/12}}^{\floor{11m/12}} \binom{n}{k}\binom{n}{m-k}}{\binom{2n}{m}}\ >\ 3/5\\
\iff&\frac{m!}{2^m}\sum_{k=\ceil{5m/12}}^{\floor{11m/12}}\frac{1}{k!(m-k)!}\ >\ 3/5, \label{5m/12simplified}
\end{align}
where we may take the limit as $n\to\infty$ because the left hand side of Equation \eqref{5m/12} decreases with increasing $n$. Equation \eqref{5m/12simplified} can be verified numerically to be true for $m\geq 6$.\footnote{In fact, when $m$ is large, we expect almost all of the sets in $\Sd{m}$ to have $5m/12\leq k \leq 11m/12$; see Subection \ref{c0.5} for further discussion on this fact. For smaller values of $m\geq 6$, one can verify Equations \eqref{5m/12} and \eqref{5m/12simplified} using the following Desmos link: \url{https://www.desmos.com/calculator/e4zqwbmmcr}.}

The problem has therefore been reduced to placing an upper bound on the values of $m$ such that the expected value of $X_A$ is at most $c_1m^2$ when $A$ is chosen uniformly at random from $\Sd{m}$. The following lemma gives us the result we need.

\begin{lem}\label{1overn}
When $A$ is chosen uniformly at random from $\Sd{m}$, we have 
\begin{equation}
    \E[X_{A}]\ \leq\  \left(\frac{7}{32}+\frac{1}{m}+\frac{5j}{8m^2}\right)\frac{m^4}{n}.
\end{equation}
\end{lem}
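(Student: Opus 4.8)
The plan is to compute $\E[X_A]$ directly by linearity of expectation over all non-redundant collisions. Let $N_A$ be the number of non-redundant collision quadruples, so that $X_A = N_A/2$. Then
\[
\E[X_A] \;=\; \frac12 \sum_{q}\P[q\subseteq A],
\]
where the sum ranges over all $q=(a,b,c,d)\in D^4$ with $ab=cd$ that are not redundant, and $\P[q\subseteq A]$ is the probability that every element occurring in $q$ lies in the uniformly random $m$-subset $A$. If $q$ involves exactly $t$ distinct elements of $D$, then $\P[q\subseteq A]=\binom{2n-t}{m-t}\big/\binom{2n}{m}$, and comparing the falling factorials factor by factor (using $m\le 2n$, so that each $\frac{m-i}{2n-i}\le\frac{m}{2n}$) yields the clean upper bound $\P[q\subseteq A]\le (m/2n)^t$. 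Thus the problem reduces entirely to counting non-redundant collisions according to their number $t$ of distinct elements.

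I would next sort the quadruples by the rotation/flip pattern of $(a,b,c,d)\in\{R_D,F_D\}^4$. Because $ab$ and $cd$ must agree in their $\Z_2$-component, a collision forces $z_a\oplus z_b = z_c\oplus z_d$; evaluating the products through the rule $(z_1,g_1)(z_2,g_2)=(z_1\oplus z_2,\,g_1 g_2^{(-1)^{z_1}})$ then turns each surviving pattern into a single equation in $G$ of the shape $g_a g_b^{\pm1}=g_c g_d^{\pm1}$. Discarding the all-flips pattern, which is redundant by definition, leaves exactly seven admissible patterns: $RRRR$, $RRFF$, $FFRR$, $RFRF$, $RFFR$, $FRRF$, and $FRFR$. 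For the dominant contribution, where all four elements are distinct ($t=4$), each of these seven equations has exactly $n^3$ solutions in $G^4$ (three coordinates are free and the fourth is forced), so there are at most $7n^3$ such quadruples; combined with the bound $(m/2n)^4$ this gives the leading term
\[
\tfrac12\cdot 7n^3\cdot\left(\tfrac{m}{2n}\right)^{4}\;=\;\frac{7}{32}\,\frac{m^4}{n}.
\]

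The remaining two summands come from degenerate quadruples. When exactly three elements are distinct ($t=3$), one coincidence among $a,b,c,d$ is imposed; for each pattern this leaves only $O(n^2)$ solutions, and with the bound $(m/2n)^3$ these contribute a correction of the form $\tfrac1m\cdot\tfrac{m^4}{n}=m^3/n$. When only two elements are distinct ($t=2$), the collision equation collapses to a condition forcing some group element to equal its own inverse, i.e.\ to have order at most $2$ in $G$; such quadruples therefore number $O(jn)$, and with the bound $(m/2n)^2$ they produce the $j$-dependent correction $\tfrac{5j}{8m^2}\cdot\tfrac{m^4}{n}=\tfrac{5j}{8}\,m^2/n$. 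Summing the three contributions gives the claimed inequality.

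The main obstacle is the exact bookkeeping of the lower-order counts rather than the leading term: once the seven admissible patterns are identified, the coefficient $7/32$ drops out immediately, but pinning down the constants $1$ and $5j/8$ requires enumerating, for each of the seven patterns, precisely which coincidences among $a,b,c,d$ are simultaneously genuine solutions of the collision equation, non-redundant, and of the claimed order-type. In particular one must track carefully how an order-$\le 2$ element of $G$ makes the $R+F$ and $F+R$ products coincide, and verify that these degenerate families are counted by $j$ with total leading coefficient exactly $5$. I would organize this as a case analysis over the seven patterns, in each case subtracting off the redundant and lower-$t$ solutions explicitly before applying the probability bound.
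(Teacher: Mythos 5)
Your proposal is correct and takes essentially the same route as the paper: the paper also computes $\E[X_A]$ by linearity over non-redundant configurations (organized as triples $(a,b,c)$ with $d=c^{-1}ab$ forced, partitioned into classes $T_1,\dots,T_7$ by coincidences among the four elements), applies the identical bound $\binom{2n-t}{m-t}\big/\binom{2n}{m}\le (m/2n)^t$, and bounds the all-distinct count by $7n^3$ just as your seven-pattern count does. The bookkeeping you defer works out exactly as you predict: the four genuinely-three-distinct coincidence families each have at most $(2n)^2=4n^2$ members, totaling $16n^2$ and yielding the $m^3/n$ term, while the two two-distinct families force an element of order at most $2$ in $G$ and have at most $2nj$ and $3nj$ members respectively, yielding the $\tfrac{5j}{8}\,m^2/n$ term.
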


Much of the machinery of this proof lies in Lemma \ref{1overn}; its proof is rather technical and can be found in Subsection \ref{technical lemmas}.

When $m\geq 6$, Lemma \ref{1overn} implies that under the hypothesis of the lemma, \begin{equation}\label{c2overn} \E[X_{A}]\ \leq\ c_2\frac{m^4}{n}, \quad \text{ where } c_2\  =\ \frac{7}{32}+\frac{1}{6}+\frac{5j}{288}\ =\ \frac{111+5j}{288}.\end{equation}

We are ready to complete the proof of Theorem \ref{smallm}. Recall that we wanted to have $\E[X_A] \leq c_1m^2$ to ensure most subsets of size $m$ would be MSTD. Thus the requisite upper bound on $m$ is determined as

\begin{gather}
    c_2\frac{m^4}{n}\ \leq\ c_1 m^2, \nonumber\\
    \iff m\ \leq\  \sqrt{\frac{c_1 n}{c_2}}\ =\  c_j\sqrt{n},
\end{gather}
where $c_j= \sqrt{c_1/c_2} =\sqrt{7/(4(111+5j))}\approx 1.3229/\sqrt{5j+111}$. 

This concludes the proof of Theorem \ref{smallm}.

\subsection{Proof of Theorems \ref{largen} and \ref{evensmallerm}}\label{c0.5}

In this section we prove Theorem \ref{largen}, and in the process we outline the steps needed to prove Theorem \ref{evensmallerm}. We now assume that $n>n_{j,\epsilon}$, where $j$ is a constant upper bound on the number of elements of order at most 2 in the group $G$ and $n_{j,\epsilon}$ is sufficiently large.

Having proved Theorem \ref{smallm} for $6\leq m\leq (1.3229/\sqrt{5j+111})\sqrt{n}$, we may now assume $m\geq (1.3229/\sqrt{5j+111})\sqrt{n}$; that is, $m$ is now large. We will follow similar steps to the previous proof, but using this assumption, we will increase $c_1$ to be arbitrarily close to $1/16$, and we will decrease $c_2$ to be arbitrarily close to $7/32$. Then we will have that the coefficient of the $\sqrt{n}$, which is $\sqrt{c_1/c_2}$ (as discussed in the previous proof), is arbitrarily close to $\sqrt{2/7}$.

Take small $\epsilon_1>0$. Note that inside of $\Sd{m}$, the distribution of values of $k$ is a hypergeometric distribution. This is because one can construct a random set in $\Sd{m}$ by taking $m$ random elements of the group $D$ without replacement, one at a time; to begin with there is a $1/2$ chance each time that we choose a flip. Thus since $n$ is very large and $j$ is fixed, having $m\geq (1.3229/\sqrt{5j+111})\sqrt{n}$ is sufficient for a proportion at least $1-\epsilon_1$ of sets in $\Sd{m}$ to have $k\in[(1/2-\epsilon_1)m, (1/2+\epsilon_1)m]$.

Going back to Equation \eqref{withcol2}, we thus see that we just need $\sqrt{m^2-6X_A}$ to be at least $m/2+\epsilon_1$, or $6X_A \leq m^2-(m/2+\epsilon_1)^2$, slightly more than half the time when $k$ is in the relevant interval. More specifically, we need \begin{equation}\label{xa stuff} \P\left[6X_A \leq \frac{3m^2}{4}-m\epsilon_1-\epsilon_1^2\ \Big|\  (1/2-\epsilon_1)m\leq k\leq (5/6+\epsilon_1)m \right] \ \geq\ \frac{1}{2}\cdot\frac{1}{(1-\epsilon_1)^2}.\end{equation}

Then, among sets with $(1/2-\epsilon_1)m\leq k\leq (5/6+\epsilon_1)m$ (which, recall, form a proportion of at least $1-\epsilon_1$ of sets in $\Sd{m}$), at least a proportion of $(1/2)/(1-\epsilon_1)^2$ satisfy Equation \eqref{withcol2}. This means that a proportion of at least $(1/2)/(1-\epsilon_1) > 1/2$ of sets in $\Sd{m}$ are MSTD.

For Equation \eqref{xa stuff} to hold, we claim that it suffices to have the following probability bound, not conditioned on the size of $k$:
\begin{equation}\label{xa stuff 2}
\P\left[X_A \leq \frac{m^2}{8} - \frac{m\epsilon_1}{6}-\frac{\epsilon_1^2}{6}\right]\ \geq\  \frac{1}{2}\cdot \frac{1}{1-\epsilon_1}+\epsilon_1.
\end{equation}
To see why Equation \eqref{xa stuff 2} implies Equation \eqref{xa stuff}, call $B$ the event that $(1/2-\epsilon_1)m\leq k\leq (5/6+\epsilon_1)m$ and $C$ the event that $X_A \leq \frac{m^2}{8} - \frac{m\epsilon_1}{6}-\frac{\epsilon_1^2}{6}$.  Then, we manipulate conditional probabilities as follows.
\begin{gather}\label{xa stuff 3}
\P[C]\ =\ \P\left[C\ |\ B\right]\P[B] + \P\left[C\ |\ \neg {B}\right]\P[\neg {B}]\ \leq\ \P\left[C\ |\ B\right]\P[B]+\P[\neg B],\nonumber\\
\iff\P\left[C\ |\ B\right]\ \geq\  \frac{\P[C]-\P[\neg B]}{\P[B]}\ =\ \frac{\P[C]-(1-\P[B])}{\P[B]}\  =\ 1-\frac{1-\P[C]}{\P[B]}.
\end{gather}
Since $\P[B]\geq 1-\epsilon_1$ and Equation \eqref{xa stuff 2} says that $\P[C]\geq (1/2)/(1-\epsilon_1)+\epsilon_1$, we have that if Equation \eqref{xa stuff 2} is true, then
\begin{align}
    \P[C\ |\ B]\ \geq\  1-\frac{1-((1/2)/(1-\epsilon_1)+\epsilon_1)}{1-\epsilon_1}\ =\  1-\frac{(1-\epsilon_1) - (1/2)/(1-\epsilon_1)}{1-\epsilon_1}\ =\  \frac{1/2}{(1-\epsilon_1)^2},
\end{align}
and the claim is shown.

To ensure that Equation \eqref{xa stuff 2} is true, we require \begin{equation}\label{exa} \E[X_A]\ \leq\ \left(1-\epsilon_1 - \frac{1/2}{1-\epsilon_1}\right)\left(\frac{m^2}{8}-\frac{m\epsilon_1}{6}-\frac{\epsilon_1^2}{6}\right).
\end{equation}
Then by Markov's inequality, the probability that $X_A$ exceeds $\frac{m^2}{8}-\frac{m\epsilon_1}{6}-\frac{\epsilon_1^2}{6}$ is at most $1-\epsilon_1 - \frac{1/2}{1-\epsilon_1}$, which is equivalent to Equation \eqref{xa stuff 2}. 

We may now choose a small value $\epsilon_2$ such that Equation \eqref{exa} is true if 
\begin{equation}\label{exa2}
    \E[X_A]\ \leq\  \left(\frac{1}{16}-\epsilon_2\right)m^2.
\end{equation}
Notice that in the limit $\epsilon_1\to 0$, Equation \eqref{exa} boils down to the statement that $\E[X_A] \leq m^2/16$, so we can make $\epsilon_2$ be as small as desired by making $\epsilon_1$ be small.

We now revisit Lemma \ref{1overn}. We are now assuming that $m$ is large and $j$ is a constant, so only the first term dominates:
\begin{equation}\label{exai}
    \E[X_A]\ \leq\  \left(\frac{7}{32}+\epsilon_3\right)\frac{m^4}{n}.
\end{equation}
We wanted to have
$\E[X_A]\leq (1/16-\epsilon_2)m^2$. Thus the upper bound on $m$ is now determined as 
\begin{gather}
\left(\frac{7}{32}+\epsilon_3\right)\frac{m^4}{n} \ \leq\  \left(\frac{1}{16}-\epsilon_2\right)m^2 \nonumber\\ 
    \iff m\ \leq\ c\sqrt{n},
\end{gather}
where \begin{align}
    c\ =\ \sqrt{\frac{1/16-\epsilon'}{7/32+\epsilon_3}}.
\end{align}
If $n$ is arbitrarily large and $j$ is constant compared to $n$, we can choose $\epsilon_2$ and $\epsilon_3$ to be very small, so that $c$ is arbitrarily close to $\sqrt{(1/16) / (7/32)} \approx 0.5345$.

This completes the proof of Theorem \ref{largen}. To prove Theorem \ref{evensmallerm}, we follow a very similar method. We choose $m_\epsilon$ large enough that almost all of the sets in $\Sd{m}$ have $(0.5-\epsilon_1)m\leq k\leq (0.5+\epsilon_1)m$. The only difference is that now, in Equation \eqref{xa stuff} we replace the right-hand side with $(1-\epsilon_1)$, so that the proportion of MSTD sets is at least $(1-\epsilon_1)^2$. (We may choose $\epsilon_1$ so that $(1-\epsilon_1)^2$ equals the desired $(1-\epsilon)$). Then in Equation \eqref{xa stuff 2} we replace the right-hand side with $1-(1-\epsilon_1)\epsilon_1$, leading to the analog of Equation \eqref{exa2} being that $\E[X_A]\leq \epsilon_2 m^2$ for some small $\epsilon_2$ depending on $\epsilon_1$. The rest of the proof continues as before, leading to a coefficient $c_{\epsilon,j}$ proportional to $\sqrt{\epsilon_2}$. This completes the proof.
    
\subsection{Proof of Lemma \ref{1overn}}\label{technical lemmas}
We now prove Lemma \ref{1overn}. Recall that $X_A$ is defined to be half the number of non-redundant collisions in the set $A$, and we are interested in bounding above the expectation value of $X_A$ when $A$ is chosen uniformly at random from $\Sd{m}$. 

To more easily count the collisions in $A$, we make the following definition.
\begin{defn} A \textbf{redundant triple} is a triple $(a,b,c)\in D^3$ such that the quadruple $(a,b,c,c^{-1}ab)$ is redundant. That is, a triple $(a,b,c)$ is redundant if $a=c$, or if $a, b,$ and $c$ are all flips, or if $b=c$ and $a$ and $b$ are both rotations. Denote $T\subseteq D^3$ to be the set of non-redundant triples.
\end{defn}

Define the function $\chi: \Sd{m} \times T \to \{0,1\}$ by $\chi(A,t) = 1$ if for the non-redundant triple $t=(a,b,c)$, the element $c^{-1}ab$ is in $A$, and $\chi(A,t)=0$ otherwise.

For a fixed set $A$, the set $A^3\cap T$ is the set of non-redundant triples with all three elements contained in $A$. Notice that we have 
\begin{equation}
X_A\ =\ \frac{1}{2}\sum_{t\in A^3\cap T}\chi(A,t).\end{equation}
That is, the number of non-redundant collisions in $A$ is the same as the number of non-redundant triples $(a,b,c)\in A^3\cap T$ such that the element $d=c^{-1}ab$ is in $A$, forming a quadruple $(a,b,c,d)$ representing a collision $ab=cd$.

By definition of expectation value, we write 
\begin{equation}
    \E[X_A]\ =\ \sum_{A\in \Sd{m}} \P[A] X_A.
\end{equation}
Since $A$ is chosen uniformly at random from the $\binom{2n}{m}$ sets in $\Sd{m}$, we have $\P[A] = 1/\binom{2n}{m}$. Thus,
\begin{equation}
    \E[X_A]\ =\ \frac{1}{\binom{2n}{m}} \sum_{A\in \Sd{m}} \frac{1}{2}\sum_{t\in A^3\cap T}\chi(A,t).
\end{equation}
We swap the order of the sums.
\begin{equation}
\E[X_A]\ =\ \frac{1}{2}\frac{1}{\binom{2n}{m}}\sum_{t\in T}\sum_{\substack{A\in\Sd{m} \\ A^3\ni t}} \chi(A,t).
\end{equation}
To compute the inner sum, we must simply count the number of sets $A\in \Sd{m}$ with $t=(a,b,c)\in A^3$ such that $A$ contains $c^{-1}ab$. That is,
\begin{equation}\label{sumtriples}
    \E[X_A]\ =\  \frac{1}{2}\frac{1}{\binom{2n}{m}}\sum_{(a,b,c)\in T} |\{A\in \Sd{m}\ |\ a,b,c,c^{-1}ab \in A\}|.
\end{equation}
We now break this sum into seven pieces for different kinds of triples $(a,b,c)\in T$. These are:
\begin{itemize}
    \item $T_1\ =\ \{(a,b,c)\in T\ |\  a,b,c,c^{-1}ab \text{ are distinct }\}$
    \item $T_2\ =\ \{(a,b,c)\in T\ |\  a=b;\  a, c,c^{-1}a^2 \text{ are distinct}\}$
    \item $T_3\ =\ \{(a,b,c)\in T\ |\  b=c;\  a, c,c^{-1}ac \text{ are distinct}\}$
    \item $T_4\ =\ \{(a,b,c)\in T\ |\  c^{-1}ab=a;\  a, b, c \text{ are distinct}\}$
    \item $T_5\ =\ \{(a,b,c)\in T\ |\  c^{-1}ab=c;\  a, b, c \text{ are distinct}\}$
    \item $T_6\ =\ \{(a,b,c)\in T\ |\  b=c; a=c^{-1}ac; \  a, c \text{ are distinct}\}$
    \item $T_7\ =\ \{(a,b,c)\in T\ |\  a=b; c=c^{-1}a^2; \  a, c \text{ are distinct}\}$
\end{itemize}
We have $T = \bigcup_{i=1}^7 T_i$, for these seven cases cover all the cases of possible equalities between the four elements except for those where $a=c$, or equivalently, $c^{-1}ab=b$, since those cases are redundant triples. Furthermore, this union is disjoint.

Note that for triples $(a,b,c)\in T_1$, the quantity $|\{A\in \Sd{m}\ |\ a,b,c,c^{-1}ab \in A\}|$ is given by $\binom{2n-4}{m-4}$ since we are requiring four distinct elements to be in $A$, and we have $2n-4$ choices for the remaining $m-4$ elements. For triples in $T_2, T_3, T_4,$ and $T_5$, we are requiring three distinct elements to be in $A$, so we have $|\{A\in \Sd{m}\ |\ a,b,c,c^{-1}ab \in A\}| = \binom{2n-3}{m-3}$. Finally, for triples in $T_6$ and $T_7$ we have $|\{A\in \Sd{m}\ |\ a,b,c,c^{-1}ab \in A\}| = \binom{2n-2}{m-2}$. 

Therefore, from Equation \eqref{sumtriples}, we may write:
\begin{align}
    \E[X_A]\ &=\  \frac{1}{2}\frac{1}{\binom{2n}{m}}\left[\binom{2n-4}{m-4} |T_1| + \binom{2n-3}{m-3}\left(|T_2|+|T_3|+|T_4|+|T_5|\right)+\binom{2n-2}{m-2}\left(|T_6|+|T_7|\right)\right]\nonumber\\
    &\leq\ \frac{1}{2}\left[\left(\frac{m}{2n}\right)^4 |T_1| + \left(\frac{m}{2n}\right)^3\left(|T_2|+|T_3|+|T_4|+|T_5|\right)+\left(\frac{m}{2n}\right)^2\left(|T_6|+|T_7|\right)\right],\label{exas}
\end{align}
where in the second line we used the fact that
\begin{equation}
    \frac{\binom{2n-4}{m-4}}{\binom{2n}{m}}\ =\  \frac{m(m-1)(m-2)(m-3)}{2n(2n-1)(2n-2)(2n-3)}\ \leq\ \left(\frac{m}{2n}\right)^4,
\end{equation} and similarly for the other two terms.

Now, to use Equation \eqref{exas} to find an upper bound on $\E[X_A]$, all that remains to be done is find an upper bound on each of the $|T_i|$'s. We do so next.

We bound $|T_1|$ using the trivial inequality $|T_1|\leq |T|$. There are $(2n)^3$ total triples in $D^3$, but we may subtract the redundant triples, including the $n^3$ triples consisting of three flips. Thus we obtain \begin{equation} |T_1|\ \leq\ 7n^3.\end{equation}

We bound $|T_2|$ and $|T_3|$ next. We have $2n$ choices for $b$. In $T_2$, $a$ must equal $b$, and in $T_3$, $c$ must equal $b$, and in both, we have at most $2n$ choices for the remaining value of $a$ or $c$. The extra condition that $c^{-1}ab$ is distinct from the others only lowers $|T_2|$ and $|T_3|$, so we do not have to take it into account to obtain an upper bound. Thus, \begin{equation}
    |T_2|,\ |T_3|\ \leq\ 4n^2.
\end{equation}

For $|T_4|$ and $|T_5|$, we have $2n$ choices for $a$ and $2n$ choices for $c$, but $b$ must be the element $a^{-1}ca$ for $T_4$ or $a^{-1}c^2$ for $T_5$. Thus we have
\begin{equation}
    |T_4|,\ |T_5|\ \leq\ 4n^2.
\end{equation}

When considering $T_6$, we note that since $b=c$, the triple $(a,b,c)$ is redundant if $a$ and $c$ are both rotations, or if both are flips. So, one must be a rotation and the other must be a flip, and we require that $a=c^{-1}ac$, or $ca=ac$. Thus to bound $|T_6|$ we must count the number of pairs of elements with $(0, g_1) \cdot (1, g_2) = (1,g_2)\cdot (0,g_1)$, that is, $(1,g_1g_2) = (1,g_2g_1^{-1})$. This happens if and only if $g_1^{-1}=g_1$, or $g_1^2=1$. Recalling that there are $j$ elements in $G$ with order $2$ or less, there are therefore $j$ choices for the rotation element, and $n$ choices for the flip element. We multiply by $2$ since $a$ can be either the flip or the rotation, and $c$ is the other of the two. Thus,
\begin{equation}
    |T_6|\ \leq\ 2nj.
\end{equation}
Finally, for $T_7$, we again split into two cases: firstly where $a$ is a rotation, and secondly where $a$ is a flip, so $c$ is a rotation to avoid redundancy. Here we require $c=c^{-1}a^2$, or $c^2=a^2$. For the first case, we first consider the number of pairs with $a^2=c^2=1$. Since $a$ must be a rotation, there are only $j$ choices for $a^2=1$; $c$ can be a flip or a rotation, so there are $n+j$ ways to have $c^2=1$. So, there are at most $(n+j)j$ pairs of this kind. If $a$ is a rotation and $a^2\neq 1$, then $c$ must also be a rotation since otherwise $c^2=1\neq a^2$. Thus $a$ and $c$ commute, so $a^2=c^2\iff (ac^{-1})^2 = 1$. There are $n-j$ choices of $a$ with $a^2\neq 1$, and for each, there are $j$ choices of $c$ which have $(ac^{-1})^2=1$. Thus there are at most $(n-j)j$ pairs of this kind.

Next we consider the case where $a$ is a flip and $c$ is a rotation. Here $a^2=1$, so there are only $j$ choices for $c$ that have $c^2=a^2$. Thus there are $nj$ pairs of this kind. In total,
\begin{equation}
    |T_7|\ \leq\ (n+j)j+(n-j)j+nj\ =\ 3nj.
\end{equation}
We now substitute these bounds on each $|T_i|$ into Equation \eqref{exas}.
\begin{align}
\E[X_A]\ &\leq\  \frac{1}{2}\left[\left(\frac{m}{2n}\right)^4 \left(7n^3\right) + \left(\frac{m}{2n}\right)^3\left(4n^2+4n^2+4n^2+4n^2\right)+\left(\frac{m}{2n}\right)^2\left(2nj+3nj\right)\right]\nonumber\\
&=\ \left(\frac{7}{32}+\frac{1}{m}+\frac{5j}{8m^2}\right)\frac{m^4}{n}.
\end{align}
This concludes the proof of Lemma \ref{1overn}.

%%%%%%%%%%%%%%%%%%%%%%%%%%%%%%%%%%%%%%%%%%%%%%%%%%%%%%%%%%%%%%%%%%
%%%%%%%%%%%%%%%%%%%%%%%%%%%%%%%%%%%%%%%%%%%%%%%%%%%%%%%%%%%%%%%%%%
\subsection{Finitely Generated Abelian Groups}\label{fingenabgroups}
We transition to a discussion of finitely generated abelian groups $G$. When $|G| < \infty$ Theorems \ref{smallm}, \ref{largen}, and \ref{evensmallerm} hold, so the remaining case is when $G$ is an infinite group. However, we must make some restriction as to ensure taking subsets uniformly at random is well defined. By the fundamental theorem of finitely generated abelian groups,
\begin{equation}
    G \cong \mathbb{Z}^{r_0} \oplus \mathbb{Z}_{q_1}^{r_1} \oplus \cdots \oplus \mathbb{Z}_{q_k}^{r_k},
\end{equation}
where $q_i$ are powers of (not necessarily distinct) primes. We denote elements of $G$ as a tuple $(g_{0,1},g_{0,2},\dots,g_{0,r_0},g_{1,1},\dots,g_{k,r_k}) \in G$,
where $g_{0,b} \in \mathbb{Z}$ and $g_{a,b} \in \mathbb{Z}_{q_a}$ for $a > 0$. Since this section will occasionally require us to deal with multiple groups simultaneously, we update our notation of $D$ as the generalized dihedral group of $G$ to the standard $\mathrm{Dih}(G)$. We still denote elements of $\mathrm{Dih}(G)$ as $(z,g)$, where $z \in \{0,1\}, g \in G$. 

For some fixed $\alpha \in \mathbb{N}$, we will consider taking subsets uniformly at random from the finite 
\begin{equation}
    \mathrm{Dih}(G_\alpha) = \{(z,(g_{0,1},\dots,g_{0,r_0},\dots,g_{k,r_k})) \in \mathrm{Dih}(G) \ |\ 0 \leq g_{0,b} < \alpha\}.
\end{equation}
Our goal is to leverage Theorems \ref{smallm}, \ref{largen}, and \ref{evensmallerm}, and, to do so, we will refer to $\mathrm{Dih}(G')$, where \begin{equation}
G' = \mathbb{Z}_\alpha^{r_0} \oplus \mathbb{Z}_{q_1}^{r_1} \oplus \cdots \oplus \mathbb{Z}_{q_k}^{r_k}.
\end{equation}
To adhere to prior notation, let $j$ be the number of elements in $G'$ that are at most order $2$ and let $\mathcal{S}_m$ denote the collection of subsets of $\mathrm{Dih}(G_\alpha)$ that have size $m$. Then we get the following corollaries of Theorems \ref{smallm}, \ref{largen}, and \ref{evensmallerm}:
\begin{cor}\label{cor_small_m}
If $6 \leq m \leq c_j\sqrt{n}$, then there are more MSTD than MDTS in $\mathcal{S}_m$.
\end{cor}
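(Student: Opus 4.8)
The plan is to reduce the statement to Theorem \ref{smallm} by transporting subsets of the (non-group) set $\mathrm{Dih}(G_\alpha)$ to the genuine finite generalized dihedral group $\mathrm{Dih}(G')$ via reduction of the free coordinates modulo $\alpha$. Let $\psi\colon G\to G'$ be the surjective homomorphism reducing each $g_{0,b}\in\mathbb{Z}$ modulo $\alpha$ and fixing all torsion coordinates; since $\psi$ commutes with inversion, it induces a group homomorphism $\phi\colon \mathrm{Dih}(G)\to \mathrm{Dih}(G')$ that fixes the $\mathbb{Z}_2$-component. Restricted to $\mathrm{Dih}(G_\alpha)$, the map $\phi$ is a bijection onto $\mathrm{Dih}(G')$, because every class in $\mathbb{Z}_\alpha$ has exactly one representative in $\{0,\dots,\alpha-1\}$; consequently $\phi$ induces a bijection $\mathcal S_m\to \mathcal S_{D',m}$ of size-$m$ subsets, where $D'=\mathrm{Dih}(G')$ has order $2n$ with $n=|G'|=|G_\alpha|$. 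In particular, if $A$ is drawn uniformly from $\mathcal S_m$, then $\phi(A)$ is uniform among the size-$m$ subsets of $D'$.

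First I would observe that the sufficient condition for being MSTD derived in the proof of Theorem \ref{smallm} applies verbatim to $A\subseteq \mathrm{Dih}(G_\alpha)$. The naive count leading to Equation \eqref{nocol}, and hence the collision-corrected criterion of Equation \eqref{withcol2}, uses only the structural facts that every flip squares to the identity and that rotations commute under the group law. Both hold in any generalized dihedral group, and in particular in the ambient group $\mathrm{Dih}(G)$ in which the sets $A+A$ and $A-A$ are formed. Thus $A$ is MSTD whenever its flip count $k$ and its (half) non-redundant collision count $X_A$ satisfy Equation \eqref{withcol2}.

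The key step, and the only genuinely new point to verify carefully, is the inequality $X_A\le X_{\phi(A)}$. Since $\phi$ is a homomorphism, every collision $ab=cd$ among elements of $A$ maps to a collision $\phi(a)\phi(b)=\phi(c)\phi(d)$ in $\phi(A)$; and because $\phi$ is injective on $\mathrm{Dih}(G_\alpha)$ and preserves the rotation/flip dichotomy, it sends distinct quadruples to distinct quadruples and non-redundant quadruples to non-redundant quadruples. Hence the non-redundant collisions of $A$ inject into those of $\phi(A)$, so $X_A\le X_{\phi(A)}$; intuitively, reduction modulo $\alpha$ can only merge sums through wraparound, never separate a coincidence. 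Taking expectations over the uniform measure and applying Lemma \ref{1overn} to the finite group $D'$, whose relevant parameter is exactly the number $j$ of order-at-most-$2$ elements of $G'$, yields $\E[X_A]\le \E[X_{\phi(A)}]\le \left(\tfrac{7}{32}+\tfrac1m+\tfrac{5j}{8m^2}\right)\tfrac{m^4}{n}$.

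Finally I would run the probabilistic argument of Theorem \ref{smallm} unchanged. Because $|R_D|=|F_D|=n$ for $\mathrm{Dih}(G_\alpha)$, the flip count $k$ of a uniform $A\in\mathcal S_m$ obeys the same hypergeometric law as in the finite case, so Equation \eqref{5m/12} holds for $m\ge 6$ and more than a $3/5$ proportion of sets satisfy $5m/12\le k\le 11m/12$. Combining this with the Markov bound supplied by the expectation estimate above, exactly as in the proof of Theorem \ref{smallm}, shows that more than half of $\mathcal S_m$ is MSTD for $6\le m\le c_j\sqrt n$, and hence that there are more MSTD than MDTS sets in $\mathcal S_m$. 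The main obstacle is precisely the transfer inequality $X_A\le X_{\phi(A)}$; once established, the corollary follows immediately from the finite case.
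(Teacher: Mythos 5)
Your proposal is correct and takes essentially the same approach as the paper: its proof of Corollaries \ref{cor_small_m}--\ref{cor_smaller_m} constructs exactly your coordinate-reduction bijection $\phi\colon \mathrm{Dih}(G_\alpha)\to\mathrm{Dih}(G')$ and shows every collision in $\mathrm{Dih}(G_\alpha)$ maps to a collision in $\mathrm{Dih}(G')$, so that the non-redundant collision count can only grow under $\phi$ and the machinery of Theorem \ref{smallm} (via Lemma \ref{1overn} applied to $\mathrm{Dih}(G')$) carries over. The only cosmetic difference is that you deduce the collision transfer from $\phi$ being the restriction of a group homomorphism, while the paper verifies it by writing out the coordinatewise system of congruences; your explicit checks that $\phi$ preserves non-redundancy and that $k$ obeys the same hypergeometric law simply make precise the paper's ``this immediately implies'' step.
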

\begin{cor}\label{cor_large_n}
If $n\geq n_{j,\epsilon}$, then if $6\leq m\leq \left(\sqrt{2/7}-\epsilon\right)\sqrt n$, then there are more MSTD than MDTS sets in $\mathcal S_m$.
\end{cor}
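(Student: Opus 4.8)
The plan is to transfer the entire collision argument of Section~\ref{collision analysis} from the honest generalized dihedral group $\mathrm{Dih}(G')$ to the ``box'' $\mathrm{Dih}(G_\alpha)$, which is not itself a group, by means of the reduction homomorphism that reduces the free $\mathbb{Z}$-coordinates modulo $\alpha$. Concretely, first I would introduce the homomorphism $\phi\colon \mathrm{Dih}(G)\to\mathrm{Dih}(G')$ sending $(z,(g_{0,1},\dots,g_{k,r_k}))$ to $(z,(\bar g_{0,1},\dots,\bar g_{k,r_k}))$, where each free coordinate $g_{0,b}\in\mathbb{Z}$ is replaced by its residue in $\mathbb{Z}_\alpha$ and the torsion coordinates are left unchanged. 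Since every residue class modulo $\alpha$ has a unique representative in $\{0,\dots,\alpha-1\}$, the restriction of $\phi$ to $\mathrm{Dih}(G_\alpha)$ is a bijection onto $\mathrm{Dih}(G')$ that carries rotations to rotations and flips to flips. Hence $\phi$ induces a bijection between the size-$m$ subsets of $\mathrm{Dih}(G_\alpha)$ and those of $\mathrm{Dih}(G')$ that preserves the number $k$ of flips; as both ground sets have exactly $n$ rotations and $n$ flips, the hypergeometric distribution of $k$ used throughout the proofs of Theorems~\ref{smallm} and~\ref{largen} is identical in the two settings. Moreover, the MSTD/MDTS bookkeeping leading to Equation~\eqref{withcol2} is purely algebraic: a flip being an involution and the identity $R-F=F-R$ follow from the defining relations of $\mathrm{Dih}(G)$ and so hold verbatim in $\mathrm{Dih}(G_\alpha)$. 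Thus for $A\in\mathcal S_m$ with $k$ flips, Equation~\eqref{withcol2} again certifies that $A$ is MSTD provided $X_A$ is small enough, where now $X_A$ is computed using the genuine group law of the infinite group $\mathrm{Dih}(G)$.

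The key step is the pointwise inequality $X_A\le X_{\phi(A)}$. A collision of $A$ is a quadruple $(a,b,c,d)\in A^4$ with $ab=cd$ as an honest equation in $\mathrm{Dih}(G)$; applying $\phi$ gives $\phi(a)\phi(b)=\phi(c)\phi(d)$, so each collision of $A$ maps to a collision of $\phi(A)$. Because $\phi$ is injective on the box, this map on quadruples is injective, and because $\phi$ preserves the rotation/flip type and equality of elements it sends redundant quadruples to redundant ones and non-redundant quadruples to non-redundant ones. Therefore the non-redundant collisions of $A$ inject into those of $\phi(A)$, whence $X_A\le X_{\phi(A)}$. Averaging over the bijection of size-$m$ subsets, this yields
\begin{equation}
    \E[X_A]\ \le\ \left(\frac{7}{32}+\frac{1}{m}+\frac{5j}{8m^2}\right)\frac{m^4}{n},
\end{equation}
the bound of Lemma~\ref{1overn} applied to $\mathrm{Dih}(G')$ with its value of $j$. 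The intuition is that the infinite group has more room than $G'$, so reducing modulo $\alpha$ can create coincidences but never destroy them; this is exactly why the inequality points in the favorable direction.

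With this bound established, I would finish by re-running the proof of Theorem~\ref{largen} verbatim: its only two inputs are the Markov-inequality tail bound on $X_A$ and the concentration of the hypergeometric variable $k$ near $m/2$, and both now hold for $\mathcal S_m$. This gives that for $n\ge n_{j,\epsilon}$ and $6\le m\le(\sqrt{2/7}-\epsilon)\sqrt n$, more than half of the sets in $\mathcal S_m$ satisfy Equation~\eqref{withcol2} and are MSTD, proving Corollary~\ref{cor_large_n}; running the proof of Theorem~\ref{smallm} (resp.\ Theorem~\ref{evensmallerm}) in its place yields Corollary~\ref{cor_small_m} (resp.\ the analogous statement). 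I expect the main obstacle to be the careful verification underlying $X_A\le X_{\phi(A)}$: one must check that $\phi$ restricts to a type-preserving bijection on the box, that the induced map on quadruples is injective and respects redundancy, and in particular that the inequality runs in the direction that reduces collisions when passing from the wrap-around arithmetic of $G'$ to the genuine arithmetic of $\mathrm{Dih}(G)$.
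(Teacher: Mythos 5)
Your proposal is correct and follows essentially the same route as the paper: the paper also defines the coordinate-preserving bijection $\phi\colon \mathrm{Dih}(G_\alpha)\to\mathrm{Dih}(G')$, shows that every collision in the box maps to a collision in $\mathrm{Dih}(G')$ so that the (non-redundant) collision count can only grow, and then invokes the proofs of Theorems~\ref{smallm}, \ref{largen}, and \ref{evensmallerm} for $\mathrm{Dih}(G')$. The only difference is cosmetic: where the paper verifies collision transfer by writing out the coordinate-wise system of equations with the operations $\star_t$, you observe that $\phi$ is the restriction of a group homomorphism $\mathrm{Dih}(G)\to\mathrm{Dih}(G')$, which gives the same conclusion more cleanly (and you make explicit the injectivity and redundancy-preservation that the paper uses implicitly).
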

\begin{cor}\label{cor_smaller_m}
For any $\epsilon > 0$, there exist $m_\epsilon$ and $c_{\epsilon, j}$ such that if $m_\epsilon\leq m \leq c_{\epsilon,j}\sqrt{n}$, the proportion of MSTD sets in $\mathcal{S}_m$ is at least $1-\epsilon$.
\end{cor}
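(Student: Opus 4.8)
The plan is to transfer the finite-group result (Theorem~\ref{evensmallerm}) to $\mathrm{Dih}(G_\alpha)$ by exhibiting a size-preserving, rotation/flip-preserving bijection onto the genuine finite generalized dihedral group $\mathrm{Dih}(G')$ which can only \emph{create}, never destroy, collisions. Since $G_\alpha$ and $G'$ share the same underlying index set (tuples whose free coordinates lie in $\{0,\dots,\alpha-1\}$), reduction of the free coordinates modulo $\alpha$ gives a set bijection $\phi\colon \mathrm{Dih}(G_\alpha)\to\mathrm{Dih}(G')$. In fact $\phi$ is the restriction to $\mathrm{Dih}(G_\alpha)$ of the group homomorphism $\Phi\colon\mathrm{Dih}(G)\to\mathrm{Dih}(G')$, $(z,g)\mapsto (z,\pi(g))$, induced by the quotient $\pi\colon\mathbb{Z}^{r_0}\to\mathbb{Z}_\alpha^{r_0}$ (identity on the torsion part); one checks $\Phi$ is a homomorphism because $\pi$ commutes with inversion. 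This $\phi$ carries the $n$ rotations (resp.\ $n$ flips) of $\mathrm{Dih}(G_\alpha)$ bijectively onto the $n$ rotations (resp.\ $n$ flips) of $\mathrm{Dih}(G')$, so it induces a bijection $\mathcal{S}_m\to\mathcal{S}_{D,m}$ (with $D=\mathrm{Dih}(G')$), $A\mapsto A'=\phi(A)$, preserving both $m$ and the flip count $k$. In particular the distribution of $k$ over $\mathcal{S}_m$ is the same hypergeometric distribution used in the proof of Theorem~\ref{evensmallerm}.

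The key step is the collision comparison $X_A\le X_{A'}$. Because $\Phi$ is a homomorphism, any quadruple $(a,b,c,d)\in A^4$ with $ab=cd$ satisfies $\phi(a)\phi(b)=\phi(c)\phi(d)$, so every collision of $A$ maps to a collision of $A'$. Since $\phi$ is injective and preserves the $\mathbb{Z}_2$-component, it preserves all equalities among $a,b,c,d$ and their rotation/flip type; hence a quadruple is redundant if and only if its image is, and non-redundant collisions of $A$ inject into non-redundant collisions of $A'$. The inequality can be strict precisely because reduction modulo $\alpha$ may create new ``wrap-around'' collisions in $G'$ that do not occur in the integer arithmetic of $G$. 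Summing over the bijection $\mathcal{S}_m\to\mathcal{S}_{D,m}$ and dividing by the common count $\binom{2n}{m}$ yields $\E[X_A]\le \E[X_{A'}]$, where the expectations are over uniform size-$m$ subsets of $\mathrm{Dih}(G_\alpha)$ and $\mathrm{Dih}(G')$ respectively.

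It then remains to feed this into the existing machinery. Applying Lemma~\ref{1overn} to $\mathrm{Dih}(G')$, which has $|G'|=n$ and exactly $j$ elements of order at most $2$, gives $\E[X_{A'}]\le \left(\tfrac{7}{32}+\tfrac{1}{m}+\tfrac{5j}{8m^2}\right)\tfrac{m^4}{n}$, so by the previous paragraph $\E[X_A]$ obeys the same bound. This is exactly the hypothesis that drives the proof of Theorem~\ref{evensmallerm}: the naive sum/difference estimates depend only on $m$, $k$, and the structural identities of a generalized dihedral group (commutativity of rotations, order-two flips, and $R-F=F-R$), all of which hold verbatim in $\mathrm{Dih}(G)$, while the only probabilistic inputs used to force $|A+A|>|A-A|$ via Equation~\eqref{withcol2} are the bound on $\E[X_A]$ and the $k$-distribution. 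Choosing $m_\epsilon$ large enough that almost all sets in $\mathcal{S}_m$ have $k$ within $\epsilon_1 m$ of $m/2$ and running the Markov-inequality argument of Theorem~\ref{evensmallerm} word for word then yields a proportion of MSTD sets in $\mathcal{S}_m$ of at least $1-\epsilon$ for $m_\epsilon\le m\le c_{\epsilon,j}\sqrt{n}$.

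The main obstacle is making the collision comparison airtight: one must confirm that redundancy is an intrinsic, $\phi$-invariant property, and that the one-sided nature of the bound (collisions are only suppressed under $\phi$, and we never rely on collisions lowering $|A-A|$) still delivers a valid \emph{sufficient} condition for being MSTD. Once $\E[X_A]\le\E[X_{A'}]$ and the matching $k$-distribution are in hand, everything downstream is a verbatim reuse of the finite case, so no new estimates are required.
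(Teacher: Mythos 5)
Your proposal is correct and is essentially the paper's own proof: the paper likewise establishes the coordinatewise bijection $\phi\colon \mathrm{Dih}(G_\alpha)\to\mathrm{Dih}(G')$ (your quotient homomorphism $\Phi$ restricted to $\mathrm{Dih}(G_\alpha)$, written there as a coordinate-by-coordinate system of congruences) and shows every collision maps to a collision, so that the count of non-redundant collisions, and hence $\E[X_A]$ via Lemma~\ref{1overn} applied to $\mathrm{Dih}(G')$, is bounded above and Theorem~\ref{evensmallerm} transfers verbatim. Your explicit checks that redundancy is $\phi$-invariant and that the flip-count distribution matches are details the paper leaves implicit, but they do not change the approach.
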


\begin{proof}
We will establish a bijection $\phi: \mathrm{Dih}(G_\alpha) \rightarrow \mathrm{Dih}(G')$
such that, if $(a,b,c,d) \in (\mathrm{Dih}(G_\alpha))^4$ is a collision, then $(\phi(a),\phi(b),\phi(c),\phi(d)) \in (\mathrm{Dih}(G'))^4$ is also a collision. Since we are still working with generalized dihedral groups, this immediately implies Corollaries \ref{cor_small_m}, \ref{cor_large_n}, and \ref{cor_smaller_m}, as the number of non-degenerate collisions in $\mathrm{Dih}(G_\alpha)$ is bounded above by the number of non-degenerate collisions in $\mathrm{Dih}(G')$.

Let $\phi: \mathrm{Dih}(G_\alpha) \rightarrow \mathrm{Dih}(G')$ defined by $\phi((z,(g_{0,1},\dots,g_{k,r_k}))) = (z,(g_{0,1},\dots,g_{k,r_k})).$ As defined, this is clearly a bijection. Let $(x_1,x_2,x_3,x_4) \in (\mathrm{Dih}(G_\alpha))^4$ be a collision. By definition, $x_1x_2 = x_3x_4$. Let $x_j = (z_j, (g_{0,1,j},\dots,g_{k,r_k,j})$. Define the binary operation $\star_t: \Z_t \times \Z_t \rightarrow \Z_t$ by
\begin{equation}
    g_1 \star g_2 = \begin{cases}
        g_1 + g_2 \mod t, \ z_1 = 0, \\ 
        g_1 - g_2 \mod t, \ z_1 = 1.
                    \end{cases}
\end{equation}      
Then we get
\begin{equation}
x_1x_2 = (z_1 + z_2 \text{ mod } 2, (g_{0,1,1} \star_n g_{0,1,2},\dots, g_{k,r_k,1} \star_{q_k} g_{k,r_k,2})).
\end{equation}

In other words, we are given the following system of equations:

\begin{align*}
    z_1 + z_2 \equiv z_3 + z_4\mod 2,  \\
    g_{0,1,1} + g_{0,1,2} = g_{0,1,3} + g_{0,1,4}, \\ 
    \vdots \\
    g_{k,r_k,1} \star_{q_k} g_{k,r_k,2} = g_{k,r_k,3} \star_{q_k} g_{k,r_k,4}.
\end{align*}

Consider $q = (\phi(x_1),\phi(x_2),\phi(x_3),\phi(x_4))$. For $q$ to be a collision in $\mathrm{Dih}(G')$, we require the following system of equations:

\begin{align*}
    z_1 + z_2 \equiv z_3 + z_4\mod 2,  \\
    g_{0,1,1} \star_\alpha g_{0,1,2} = g_{0,1,3} \star_\alpha g_{0,1,4}, \\ 
    \vdots \\
    g_{k,r_k,1} \star_{q_k} g_{k,r_k,2} = g_{k,r_k,3} \star_{q_k} g_{k,r_k,4},
\end{align*}
which is implied by our given system. Therefore the result follows.
\end{proof}

%%%%%%%%%%%%%%%%%%%%%%%%%%%%%%%%%%%%%%%%%%%%%%%%%%%%%%%%%%%%%%%%%%
\vspace{0.3cm}
We present an example of the number of collisions in another dihedral group $\mathbb{Z}_2 \ltimes \mathbb{Z}_n^2$. We show that the dihedral group $\mathbb{Z}_2 \ltimes \mathbb{Z}_n^2$ possesses exactly the same number of possible collisions as the dihedral group $\mathbb{Z}_2 \ltimes \mathbb{Z}_{n^2}$ if and only if $n$ is odd and there are more collisions in $\mathbb{Z}_2 \ltimes \mathbb{Z}_n^2$ otherwise. This result only provides some intuition on the the expected number of collisions depending on $j$, the number of elements of order $2$ within the particular abelian group.
\begin{lem} \label{eqnumcollisions}
    The number of possible collisions within the two groups are equal if and only if $n$ is odd and there are more collisions in $\mathbb{Z}_2 \ltimes \mathbb{Z}_n^2$ otherwise.
\end{lem}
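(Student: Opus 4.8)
The plan is to reduce the entire comparison to the single invariant $j$, the number of elements of order at most $2$ in $G$. The key structural point is that both $\mathbb{Z}_2 \ltimes \mathbb{Z}_n^2$ and $\mathbb{Z}_2 \ltimes \mathbb{Z}_{n^2}$ have $|G| = n^2$; writing $N = n^2$, both are generalized dihedral groups of the same order $2N$, so any difference in their collision counts must come from the structure of $G$ rather than its size. I would first argue that, for any generalized dihedral group $\mathbb{Z}_2 \ltimes G$ with $|G| = N$, the collision count depends only on the pair $(N,j)$. To do this I would revisit the case analysis from the proof of Lemma \ref{1overn} and upgrade the upper bounds on $|T_1|,\dots,|T_7|$ to exact cardinalities. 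Every point at which the structure of $G$ enters that analysis reduces either to counting solutions of $x^2 = e$ in $G$ or to deciding when a rotation and a flip commute, and both are governed precisely by $j$; for instance one finds $|T_6| = 2Nj$ and $|T_7| = 3Nj - N$, and each remaining $|T_i|$ is likewise a polynomial in $N$ and $j$ with no dependence on finer features of $G$.

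Next I would establish that the collision count is strictly increasing in $j$ for fixed $N$. The cleanest route is to assemble the exact $|T_i|$ into the expression for $\E[X_A]$ from the proof of Lemma \ref{1overn} and track the dependence on $j$: the $j$-coefficients of $|T_1|$, of $|T_2|+|T_3|+|T_4|+|T_5|$, and of $|T_6|+|T_7|$ enter weighted by $\binom{2N-4}{m-4}$, $\binom{2N-3}{m-3}$, and $\binom{2N-2}{m-2}$ respectively, and collapse via Pascal's rule to a positive multiple of the second difference $\binom{2N-4}{m-4} - 2\binom{2N-3}{m-3} + \binom{2N-2}{m-2} = \binom{2N-4}{m-2} > 0$ (the manifestly increasing pieces $|T_6|$ and $|T_7|$ already make this plausible). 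Hence two generalized dihedral groups with $|G| = N$ have the same collision count exactly when they have the same $j$, and the one with the larger $j$ has strictly more collisions.

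With this reduction in hand, the lemma is a short computation of $j$ for each group. For $G = \mathbb{Z}_n^2$ the elements of order at most $2$ are the pairs $(x,y)$ with $2x \equiv 2y \equiv 0 \pmod n$, of which there are $\gcd(2,n)^2$; for $G = \mathbb{Z}_{n^2}$ they are the $x$ with $2x \equiv 0 \pmod{n^2}$, of which there are $\gcd(2,n^2) = \gcd(2,n)$. When $n$ is odd both counts equal $1$, so the two groups share the same $j$ and therefore the same collision count. When $n$ is even the counts are $4$ and $2$ respectively, so $\mathbb{Z}_2 \ltimes \mathbb{Z}_n^2$ has the larger $j$ and hence strictly more collisions. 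By strict monotonicity this also gives the converse: equal collision counts force equal $j$, which forces $n$ odd. Combining the two cases yields the stated equivalence.

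The main obstacle is the first step. The proof of Lemma \ref{1overn} only produces upper bounds on the $|T_i|$, whereas here I need the exact cardinalities together with the structural fact that they are functions of $N$ and $j$ alone; an inequality is not enough to conclude equality of the counts when the two groups share the same $j$. Verifying this requires redoing each of the seven cases carefully — in particular the mixed rotation/flip commutation defining $T_6$ and the equation $a^2 = c^2$ defining $T_7$ — and checking that no case secretly depends on more than the number of involutions in $G$. Once the exact counts are established, the monotonicity argument and the parity computation of $j$ are routine.
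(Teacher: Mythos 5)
Your proposal is correct in substance but takes a genuinely different route from the paper. The paper never isolates the invariant $j$: it fixes a target element of the group, counts the pairs realizing that target as a sum or as a difference in each of the two specific groups (Lemmas \ref{pairdiff}, \ref{pairsumtwo}, \ref{pairsumsquare}), runs the flip-structure casework to see which abelian counting problem each dihedral configuration reduces to, and then compares totals by parity casework on $n$, $x$, $y$. You instead prove a general statement --- among generalized dihedral groups of fixed order $2N$, the (binomially weighted) collision count is a strictly increasing function of $j$ alone --- and then evaluate $j$ for the two groups, $\gcd(2,n)^2$ versus $\gcd(2,n^2)=\gcd(2,n)$. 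Your exact counts check out: indeed $|T_6|=2Nj$, $|T_7|=3Nj-N$, and one can compute $|T_2|=|T_5|=3N(N-j)$, $|T_3|=|T_4|=2N(N-j)$, $|T_1|=7N^3-14N^2+5Nj+2N$; the Pascal collapse $\binom{2N-4}{m-4}-2\binom{2N-3}{m-3}+\binom{2N-2}{m-2}=\binom{2N-4}{m-2}>0$ is also correct. Your approach buys generality (it compares \emph{any} two generalized dihedral groups of equal order via their $j$'s, with the lemma as a special case) and replaces the appendix's parity casework by a one-line computation of $j$; the paper's approach stays elementary and produces exact per-target pair counts in the two groups.

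One caveat deserves emphasis, because your framing glosses over it: the \emph{unweighted} count of non-redundant collision quadruples in the whole group is structure-independent. The $j$-coefficients of $|T_1|,\dots,|T_7|$ are $+5N,-3N,-2N,-2N,-3N,+2N,+3N$, which sum to zero, consistent with $|T|=7N^3-4N^2+N$ depending on $N$ alone (for any group, $\#\{(a,b,c,d):ab=cd\}=(2N)^3$). So your parenthetical remark that the ``manifestly increasing pieces $|T_6|$ and $|T_7|$ already make this plausible'' is misleading: those increases are exactly cancelled by the decreases in $|T_2|,\dots,|T_5|$, and strict monotonicity in $j$ emerges \emph{only} after weighting by the inclusion probabilities $\binom{2N-4}{m-4},\binom{2N-3}{m-3},\binom{2N-2}{m-2}$, i.e., precisely through your Pascal computation. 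Consequently your argument proves the dichotomy for the weighted count $2\binom{2N}{m}\E[X_A]$ (the quantity the paper actually uses in Section \ref{collision analysis}), while the appendix proves it for per-target counts of unordered realizing pairs; both conventions are $j$-sensitive and yield the stated equivalence, but the raw quadruple count would not, so the choice of what ``number of possible collisions'' means is essential to the lemma being a true, non-vacuous statement.
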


To do so, we make use of the following useful results:

\begin{lem}\label{pairdiff}
    The number of pairs $(a_1,a_2),(b_1,b_2) \in \Z_n^2$ where $a_1-b_1 \equiv x \mod n$ and $a_2-b_2 \equiv y \mod n$ and the number of pairs $a_1n+a_2,b_1n+b_2 \in \Z_{n^2}$ where $(a_1n+a_2)-(b_1n+b_2) \equiv xn+y \mod n^2$ are both equal to $n^2$
\end{lem}
\begin{proof}
    For any given $(b_1,b_2)$ and $b_1n+b_2$, there exist only a single $(a_1,a_2)$ and $a_1n+a_2$ which satisfies the conditions respectively. Thus, there are $n^2$ such pairs for both cases.
\end{proof}

\begin{lem}\label{pairsumtwo}
    The number of pairs $(a_1,a_2),(b_1,b_2) \in \Z_n^2$ where $a_1+b_1 \equiv x \mod n$ and $a_2+b_2 \equiv y \mod n$ is as follows:
    \begin{itemize}
        \item when $n$ is odd, there are $\frac{n^2+1}{2}$ such pairs
        \item when $n$ is even and $y$ is odd or $x$ is odd, there are $\frac{n^2}{2}$ such pairs
        \item when $n$, $y$, and $x$ are all even, there are $\frac{n^2+4}{2}$ such pairs
    \end{itemize}
\end{lem}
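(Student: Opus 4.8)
The plan is to reduce the count to an ordered-pair count, which is essentially already computed in Lemma~\ref{pairdiff}, and then correct for the fact that the pairs here are unordered. First I would observe that the number of \emph{ordered} pairs $((a_1,a_2),(b_1,b_2)) \in (\Z_n^2)^2$ satisfying $a_1 + b_1 \equiv x \pmod{n}$ and $a_2 + b_2 \equiv y \pmod{n}$ is exactly $n^2$: for each of the $n^2$ choices of $(b_1,b_2)$, the partner $(a_1,a_2) = (x - b_1, y - b_2)$ is uniquely determined, exactly as in the proof of Lemma~\ref{pairdiff}. The quantity the lemma asks for is the number of \emph{unordered} pairs (this is why the answer is near $n^2/2$ rather than $n^2$), so the remaining work is the passage from $n^2$ to the unordered count.

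Second, I would split the ordered solutions into diagonal ones, where $(a_1,a_2) = (b_1,b_2)$, and off-diagonal ones. Writing $D$ for the number of diagonal solutions, the $n^2 - D$ off-diagonal ordered solutions pair up under swapping the two entries, each such pair contributing one unordered pair, while each diagonal solution contributes one (degenerate) unordered pair on its own. Hence the unordered count equals
\[
\frac{n^2 - D}{2} + D \ =\ \frac{n^2 + D}{2}.
\]

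Third, I would compute $D$. A diagonal solution is an element $(a_1,a_2)$ with $2a_1 \equiv x \pmod{n}$ and $2a_2 \equiv y \pmod{n}$, so $D$ factors as the product of the number of solutions of $2a \equiv x$ and of $2a \equiv y$ in $\Z_n$. The congruence $2a \equiv t \pmod{n}$ has $\gcd(2,n)$ solutions when $\gcd(2,n) \mid t$ and none otherwise: when $n$ is odd it has the single solution $a \equiv 2^{-1}t$, and when $n$ is even it has two solutions if $t$ is even and none if $t$ is odd. This yields $D = 1$ for $n$ odd, $D = 0$ when $n$ is even and $x$ or $y$ is odd, and $D = 4$ when $n$, $x$, and $y$ are all even. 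Substituting into $\frac{n^2 + D}{2}$ gives $\frac{n^2+1}{2}$, $\frac{n^2}{2}$, and $\frac{n^2+4}{2}$ respectively, matching the three stated cases.

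There is no serious obstacle here; the only point requiring genuine care is the bookkeeping around the convention. The main thing to get right is flagging that this count is of unordered pairs, in contrast with the ordered count $n^2$ of Lemma~\ref{pairdiff}, since it is precisely this passage to unordered pairs together with the correct handling of the diagonal term $D$, via the divisibility/parity analysis of $2a \equiv t \pmod{n}$, that produces the constant corrections $+1$, $+0$, and $+4$ distinguishing the three cases.
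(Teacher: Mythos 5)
Your proof is correct. It is close in spirit to the paper's argument for the odd case, but it is organized quite differently overall, and the difference is worth noting. The paper's proof of Lemma~\ref{pairsumtwo} handles the three cases by three separate ad hoc arguments: for $n$ odd it does exactly your symmetrization (count $n^2$ ordered solutions, halve, and correct for the unique self-paired solution), but for the two even cases it switches to a coordinate-wise decomposition, first choosing the unordered pair $\{a_2,b_2\}$ summing to $y$ and then counting the admissible $(a_1,b_1)$, with sub-casework on whether $a_2=b_2$. Your proof instead runs a single uniform computation: the ordered count is always $n^2$ (as in Lemma~\ref{pairdiff}), the unordered count is always $\frac{n^2+D}{2}$ where $D$ is the number of diagonal solutions $(a_1,a_2)=(b_1,b_2)$, and $D$ factors as the product of the solution counts of $2a\equiv x$ and $2a\equiv y$ in $\Z_n$, which the standard linear-congruence criterion evaluates to $1$, $0$, or $4$ in the three cases. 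What your version buys is transparency and generality: it makes clear that the constants $+1$, $+0$, $+4$ are nothing but counts of self-paired solutions, it eliminates the per-case bookkeeping (the paper's third case is the most error-prone step of its proof), and it generalizes verbatim to $\Z_n^k$ or any finite abelian group $G$, giving $\frac{|G|+\#\{g\in G: 2g=t\}}{2}$ unordered representations of $t$ as a sum. The paper's coordinate-wise argument is more elementary in that it never invokes the general congruence-counting fact, but it offers no compensating advantage here.
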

\begin{proof}
    When $n$ is odd, each choice of $a_1$ is paired with a single possible $b_1$, with one such pair being $a_1=b_1$. Similarly, the same holds for $a_2$ and $b_2$. This gives $n^2$ as the number of pairs. However, we over-counted since swapping $a_1$ with $b_1$ and $a_2$ with $b_2$ does not yield a distinct pair. Thus, we divide by $2$ except for the single pair where $a_1=b_1$ and $a_2=b_2$ which we did not over-count to get $\frac{n^2-1}{2}+1=\frac{n^2+1}{2}$ pairs of $(a_1,a_2),(b_1,b_2)$.

    When $n$ is even but $y$ is odd, we choose pairs $(a_2,b_2)$ first which gives $n/2$ distinct pairs where $a_2$ is never equal to $b_2$. For each of these pairs, we can choose any choice of $a_1$ which forces $b_1$ without over-counting. Thus, we get $\frac{n^2}{2}$ pairs. Similar arguments hold for when $x$ is odd.

    When all of $n,y,$ and $x$ are even, we get that there are two pairs of identical number and $\frac{n-2}{2}$ pairs of different numbers which sum to $x$ and similar for $y$ in $\Z_n$. We choose $a_2,b_2$ first. If $a_2=b_2$, we have $\frac{n-2}{2}+2 = \frac{n+2}{2}$ choices for choosing $a_1,b_1$. If $a_2 \neq b_2$, we have that for any $a_1$, we uniquely determines $b_1$ without over-counting and thus there are $n$ choices. In total, we have $2*\frac{n+2}{2}+n*\frac{n-2}{2}=\frac{n^2+4}{2}$ pairs of $(a_1,a_2),(b_1,b_2)$.
\end{proof}

\begin{lem}\label{pairsumsquare}
    The number of pairs $a_1n+a_2,b_1n+b_2 \in \Z_{n^2}$ where $(a_1n+a_2)+(b_1n+b_2) \equiv xn+y \mod n^2$ is as follows:
    \begin{itemize}
        \item when $n$ is odd, there are $\frac{n^2+1}{2}$ such pairs
        \item when $n$ is even but $y$ is odd, there are $\frac{n^2}{2}$ such pairs
        \item when both $n$ and $y$ are even, there are $\frac{n^2+2}{2}$ such pairs
    \end{itemize}
\end{lem}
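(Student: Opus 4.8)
The plan is to count directly in $\Z_{n^2}$, exploiting the fact that every residue has a unique representation $a_1 n + a_2$ with $0 \le a_1, a_2 < n$, so a ``pair'' here is just an unordered pair of elements of $\Z_{n^2}$ summing to the target (matching the convention of Lemma \ref{pairsumtwo}, where the division by $2$ signals that pairs are unordered). First I would fix the target $t = xn + y$ and count \emph{ordered} pairs $(u,v) \in \Z_{n^2}^2$ with $u + v \equiv t \pmod{n^2}$: for each of the $n^2$ choices of $u$, the value $v = t - u$ is forced, so there are exactly $n^2$ ordered pairs, independently of $t$.

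The next step is to pass from ordered to unordered pairs. Swapping $u$ and $v$ identifies the ordered pairs in twos, except for the diagonal pairs with $u = v$, which are counted once. Hence the number of unordered pairs is $(n^2 + \delta)/2$, where $\delta$ is the number of $u$ satisfying $2u \equiv t \pmod{n^2}$. The entire computation thus reduces to counting solutions of the single congruence $2u \equiv t \pmod{n^2}$.

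I would then split on $\gcd(2, n^2)$. When $n$ is odd, $2$ is invertible modulo $n^2$, so $\delta = 1$ and the count is $(n^2+1)/2$. When $n$ is even, $\gcd(2,n^2) = 2$, so $2u \equiv t$ is solvable if and only if $t$ is even, in which case $\delta = 2$, and otherwise $\delta = 0$. The key (and only slightly delicate) point is to translate the parity of $t = xn + y$ into a condition on $y$: since $n$ is even, $xn$ is always even, so $t$ is even exactly when $y$ is even. This gives $\delta = 0$ and count $n^2/2$ when $y$ is odd, and $\delta = 2$ and count $(n^2-2)/2 + 2 = (n^2+2)/2$ when $y$ is even, matching the three stated cases.

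The main obstacle is not any hard estimate — the argument is entirely elementary — but rather keeping the unordered-versus-ordered bookkeeping honest and correctly counting solutions of $2u \equiv t \pmod{n^2}$, together with attributing the parity of the target to $y$ rather than $x$. This careful case split is precisely what will later allow a term-by-term comparison with the counts of Lemma \ref{pairsumtwo} in the proof of Lemma \ref{eqnumcollisions}.
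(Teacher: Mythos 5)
Your proof is correct and follows essentially the same route as the paper's: both count unordered pairs by pairing each $u$ with its forced complement $t-u$ and adjusting for the self-paired elements, i.e., the fixed points of the involution $u \mapsto t-u$. The only difference is cosmetic — you justify the fixed-point counts ($\delta = 1, 0, 2$) explicitly via the solvability of $2u \equiv t \pmod{n^2}$ and $\gcd(2,n^2)$, whereas the paper simply asserts them.
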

\begin{proof}
    When $n$ is odd, we have that each of $a_1n+a_2$ is paired with another $b_1n+b_2$, with exactly one being paired with itself. Thus, there are $\frac{n^2-1}{2}+1 = \frac{n^2+1}{2}$ pairs of $a_1n+a_2$ and $b_1n+b_2$. 

    When $n$ is even but $y$ is odd, we have that each of $a_1n+a_2$ is paired with another $b_1n+b_2$, necessarily distinct. Thus, there are $\frac{n^2}{2}$ pairs of $a_1n+a_2$ and $b_1n+b_2$. 

    When $n$ is even and $y$ is even, we have that each of $a_1n+a_2$ is paired with another $b_1n+b_2$, with exactly two being paired with themselves. Thus, there are $\frac{n^2-2}{2}+2 = \frac{n^2+2}{2}$ pairs of $a_1n+a_2$ and $b_1n+b_2$. 
\end{proof}

Combining these results over the casework where elements of our pairs may be a rotation or a flip yield the desired result, with the details of the proof in Appendix \ref{appendix eqnumcol}.

%%%%%%%%%%%%%%%%%%%%%%%%%%%%%%%%%%%%%%%%%%%%%%%%%%%%%%%%%%%%%%%%%%
%%%%%%%%%%%%%%%%%%%%%%%%%%%%%%%%%%%%%%%%%%%%%%%%%%%%%%%%%%%%%%%%%%
\section{Expected Size of Sum and Difference Sets}\label{expectation section}

In this section, we only consider the classical dihedral groups
\begin{equation}
    D_{2n} = \Z_2\ltimes \Z_n = \langle r, s \mid r^n, s^2, rsrs\rangle
\end{equation}
Throughout this section, we use $\S{m}$ to denote the set of subsets of size $m$ in $D_{2n}$.

The method of collision analysis will likely not be sufficient to prove that $\S{m}$ has more MSTD sets than MDTS sets for values of $m$ greater in order of magnitude than $\sqrt{n}$. The intuition for this comes from the fact that the sum and difference sets for $A\subseteq D_{2n}$ should very roughly have size on the same order of magnitude as $A^2$. Hence, one would expect to usually have $A+A = A-A = D_{2n}$ when $m$ is much greater than $\sqrt{n}$. The analysis for relative numbers of MSTD and MDTS sets in $\S{m}$ for these larger values of $m$ should therefore be based on counting the number of \textit{missed} sums and differences in $D_{2n}$, in direct analogy with the case of slow decay for the integers in \cite{hegarty2009almost}.

We take the first steps toward such an analysis by proving the following special case.

\expectationdifprime*

This follows from the following straightforward yet useful lemma, which reduces the problem of computing the probability of missing a sum or difference to an analogous problem in $\Z_n$.

\begin{lem}\label{lem:missing_dihedral_to_cyclic}
    Let $A$ be a subset in $\S{m}$ chosen uniformly at random. Then if $r^i$ is a rotation in $D_{2n}$, we have
    \begin{equation}\label{eq:rot_not_in_sum}
        \P[r^i\notin A+A] \ =\ \sum_{k=0}^m\ \begin{matrix*}[l]\ \ \P[\text{$A$ has $k$ flips}]\\ \cdot\ \P[i\notin S+S|\text{$S\subseteq \Z_n$, $|S|=m-k$}]\\ \cdot\ \P[i\notin S-S|\text{$S\subseteq \Z_n$, $|S|=k$}],\end{matrix*}
    \end{equation}
    and
    \begin{equation}\label{eq:rot_not_in_dif}
        \P[r^i\notin A-A] \ =\ \sum_{k=0}^m\ \begin{matrix*}[l]\ \ \P[\text{$A$ has $k$ flips}]\\ \cdot\ \P[i\notin S-S|\text{$S\subseteq \Z_n$, $|S|=m-k$}]\\ \cdot\ \P[i\notin S-S|\text{$S\subseteq \Z_n$, $|S|=k$}].\end{matrix*}
    \end{equation}
    If $r^is$ is a flip in $D_{2n}$, we have
    \begin{equation}\label{eq:flip_not_in_sum}
        \P[r^is\notin A+A] \ =\ \sum_{k=0}^m\ \begin{matrix*}[l]\ \ \P[\text{$A$ has $k$ flips}]\\ \cdot\ \P[i\notin S_1+S_2\land i\notin S_2-S_1|\text{$S_1,S_2\subseteq \Z_n$, $|S_1|=m-k$, $|S_2|=k$}],\end{matrix*}
    \end{equation}
    and
    \begin{equation}\label{eq:flip_not_in_dif}
        \P[r^is\notin A-A] \ =\ \sum_{k=0}^m\ \begin{matrix*}[l]\ \ \P[\text{$A$ has $k$ flips}]\\ \cdot\ \P[i\notin S_1+S_2|\text{$S_1,S_2\subseteq \Z_n$, $|S_1|=m-k$, $|S_2|=k$}].\end{matrix*}
    \end{equation}
\end{lem}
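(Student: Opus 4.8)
The plan is to reduce everything to a case analysis of the group law in $D_{2n}$ together with a conditioning argument on the number of flips. First I would fix the correspondence between the two ``halves'' of a subset $A\subseteq D_{2n}$ and subsets of $\Z_n$: writing $A = R\cup F$ with $R$ the rotations and $F$ the flips, I identify $R$ with the index set $S_R = \{i : r^i\in R\}\subseteq \Z_n$ and $F$ with $S_F = \{i: r^i s\in F\}\subseteq \Z_n$, so that $|S_R| = m-k$ and $|S_F| = k$ where $k = |F|$. The key probabilistic input is that choosing $A$ uniformly from $\S{m}$ is equivalent to first drawing $k$ from its (hypergeometric) marginal, and then, conditionally on $k$, drawing $S_R$ uniformly among size-$(m-k)$ subsets of $\Z_n$ and $S_F$ uniformly among size-$k$ subsets of $\Z_n$, \emph{independently}; this holds because the $\binom{n}{m-k}\binom{n}{k}$ sets with exactly $k$ flips are equally likely. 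The law of total probability over $k$ then produces the outer sum $\sum_{k=0}^m \P[\text{$A$ has $k$ flips}]\cdot(\cdots)$ in each of the four displayed identities.

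Next I would compute, directly from the relations $r^n = s^2 = 1$ and $sr = r^{-1}s$, exactly which products $a_1 a_2$ and quotients $a_1 a_2^{-1}$ hit a prescribed target. Using $(r^i s)^{-1} = r^i s$, the four product types are $r^i r^j = r^{i+j}$, $r^i(r^j s) = r^{i+j}s$, $(r^i s)r^j = r^{i-j}s$, and $(r^i s)(r^j s) = r^{i-j}$, and analogously for quotients. Translating these into statements about $S_R, S_F$ gives: a rotation $r^i$ lies in $A+A$ iff $i\in (S_R+S_R)\cup(S_F-S_F)$ and in $A-A$ iff $i\in(S_R-S_R)\cup(S_F-S_F)$; a flip $r^i s$ lies in $A+A$ iff $i\in(S_R+S_F)\cup(S_F-S_R)$ and in $A-A$ iff $i\in S_R+S_F$ (both relevant quotient types collapse to the same sumset). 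Complementing these memberships yields precisely the events whose probabilities appear inside the four sums.

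Finally I would assemble the pieces, being careful about which events factor. For the two rotation identities, the target-missing event is the conjunction of an event depending only on $S_R$ (a sum- or difference-set condition of size $m-k$) and an event depending only on $S_F$ (a difference-set condition of size $k$); since $S_R$ and $S_F$ are conditionally independent, the conditional probability factors as a product, matching \eqref{eq:rot_not_in_sum} and \eqref{eq:rot_not_in_dif}. For the two flip identities the situation is genuinely different: both relevant conditions ($i\notin S_R+S_F$ and, for the sumset, $i\notin S_F-S_R$) involve \emph{both} $S_R$ and $S_F$, so they cannot be separated, which is exactly why \eqref{eq:flip_not_in_sum} and \eqref{eq:flip_not_in_dif} keep $S_1,S_2$ inside a single joint probability rather than a product.

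The main obstacle is bookkeeping rather than depth: the crux is getting the signs right under the inversion action, in particular recognizing that $F\cdot F$ and $F-F$ give the \emph{same} set $S_F - S_F$ (because flips are involutions), that $R\cdot F$ and $F\cdot R$ give the distinct sets $S_R+S_F$ and $S_F - S_R$, and that for the flip difference set the two contributions $R\cdot F^{-1}$ and $F\cdot R^{-1}$ both reduce to $S_R+S_F$. A secondary point requiring care is a clean justification of the conditional independence of $S_R$ and $S_F$ given $k$, which underlies the factorization in the rotation cases.
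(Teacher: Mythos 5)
Your proposal is correct and takes essentially the same route as the paper's proof: condition on the number of flips $k$, translate membership of a target element into index conditions on $S_R,S_F\subseteq\Z_n$ via $sr=r^{-1}s$ and $(r^js)^{-1}=r^js$, factor the two rotation-case probabilities using independence of the $S_R$-event and the $S_F$-event, and keep the flip-case events as a single joint probability since $S_R+S_F$ and $S_F-S_R$ both involve both sets. The only difference is one of explicitness: you spell out the conditional independence of $S_R$ and $S_F$ given $k$ and the hypergeometric marginal of $k$, which the paper uses implicitly.
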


\begin{proof}
    Partition $A$ into its set of rotations $R$ and flips $F$, and suppose that $|F|=k$ and $|R|=m-k$. The rotation element $r^i$ can appear in $A+A$ precisely as $r^jr^\ell = r^{j+\ell}$ for $r^j,r^\ell\in R$ or as $(r^js)(r^\ell s) = r^{j-\ell}$ for $r^js,r^\ell s\in F$. Taking the probability of the negations of these events respectively give second and third probabilities appearing in the sum of Equation \eqref{eq:rot_not_in_sum}. Equation \eqref{eq:rot_not_in_dif} follows similarly.
    
    The flip element $r^is$ can appear in $A+A$ precisely as $r^j(r^\ell s) = r^{j+\ell}s$ or as $(r^\ell s)r^j = r^{\ell-j}s$ for $r^j\in R$, $r^\ell s\in F$, but unlike in the previous cases, these events are no longer independent so Equation \eqref{eq:flip_not_in_sum} cannot be broken up into a product of simpler probabilities. Finally, $r^is$ can appear in $A-A$ precisely $r^j(r^\ell s)^{-1} = r^{j+\ell}s = (r^{\ell}s)(r^j)^{-1}$, from which Equation \eqref{eq:flip_not_in_dif} follows.
\end{proof}

A number of these probabilities can be expressed explicitly in terms of $n$, $m$, and $k$. To prove Theorem \ref{thm:expectation_dif_prime}, we need to compute all probabilities appearing in Equation \eqref{eq:rot_not_in_dif} and Equation \eqref{eq:flip_not_in_dif}, but we will also compute the probabilities appearing in Equation \eqref{eq:rot_not_in_sum} for completeness.

\begin{lem}
    Let $S$ be a subset of $\Z_n$ of size $m-k$ chosen uniformly at random, and let $i$ be any element of $\Z_n$. Then
    \begin{equation}
        \P[i\notin S+S]\ =\ \begin{cases}\frac{2^{m-k}{\binom{\frac{n}{2}-1}{k}}}{{\binom{n}{m -k}}},\ \text{$n$ and $i$ both even,}\\
        \frac{2^{m-k}{\binom{\frac{n}{2}}{m-k}}}{{\binom{n}{m -k}}},\ \text{$n$ even and $i$ odd,}\\
        \frac{2^{m-k}{\binom{\frac{n-1}{2}}{m-k}}}{{\binom{n}{m -k}}},\ \text{$n$ odd.}
        \end{cases}
    \end{equation}
\end{lem}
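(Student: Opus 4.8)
The plan is to reduce the event $i\notin S+S$ to a forbidden-configuration condition and then count directly. First I observe that $i\in S+S$ holds precisely when $S$ contains two elements $a,b$ (possibly equal) with $a+b=i$. I would introduce the involution $\sigma\colon\Z_n\to\Z_n$ given by $\sigma(a)=i-a$, whose orbits are the fixed points $\{a\}$ with $2a=i$ together with the two-element pairs $\{a,i-a\}$ where $a\neq i-a$. The crucial observation is that for any $a\in S$, the unique element summing with $a$ to $i$ is $\sigma(a)$. Hence $i\notin S+S$ if and only if $S$ contains no fixed point of $\sigma$ (since a fixed point $a$ alone already yields $a+a=i$) and at most one element from each two-element orbit (since a pair contributes $i=a+(i-a)$ exactly when both of its members lie in $S$).

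Next I would count the favorable subsets. Writing $f$ for the number of fixed points of $\sigma$, the remaining $n-f$ elements split into $(n-f)/2$ two-element orbits. A subset $S$ of size $m-k$ avoiding $i$ in $S+S$ is built by choosing which $m-k$ of these orbits contribute an element and then selecting one of the two elements from each chosen orbit, giving $2^{m-k}\binom{(n-f)/2}{m-k}$ such sets (the binomial coefficient vanishing automatically when $m-k>(n-f)/2$, which is consistent). Dividing by the total count $\binom{n}{m-k}$ of subsets of size $m-k$ then produces the probability.

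The only remaining task is to evaluate $f$, the number of solutions of $2a\equiv i\pmod n$, in each parity case, and here the diagonal contributions $a+a$ are exactly what distinguishes the cases. When $n$ is odd, $2$ is invertible, so $f=1$ and $(n-f)/2=(n-1)/2$; when $n$ is even, the congruence $2a\equiv i$ is solvable only for even $i$, in which case it has exactly two solutions, so $f=2$ and $(n-f)/2=n/2-1$, while for odd $i$ it has none, so $f=0$ and $(n-f)/2=n/2$. Substituting these three values into $2^{m-k}\binom{(n-f)/2}{m-k}/\binom{n}{m-k}$ reproduces the three displayed formulas, with the understanding that the binomial coefficient in the first (both-even) case should read $\binom{n/2-1}{m-k}$ rather than $\binom{n/2-1}{k}$. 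I do not anticipate any genuine obstacle here; the entire argument is a clean orbit-counting computation, and the one place demanding care is the fixed-point tally, which is where the parity of $n$ and $i$ enters.
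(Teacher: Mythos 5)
Your proof is correct and takes essentially the same approach as the paper: both arguments partition $\Z_n$ into the solutions of $2a=i$ (fixed points of $a\mapsto i-a$) and the pairs $\{a,i-a\}$, count the favorable subsets as $2^{m-k}\binom{(n-f)/2}{m-k}$ by choosing $m-k$ pairs and one element from each, and resolve the three parity cases via the fixed-point count $f\in\{0,1,2\}$. You are also right that the first case of the displayed formula contains a typo: $\binom{n/2-1}{k}$ should read $\binom{n/2-1}{m-k}$, exactly as the paper's own counting argument produces.
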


\begin{proof}
    If $n$ and $i$ are both even, then there exist exactly $2$ elements of $\Z_n$ that give $i$ when added to themselves. The remaining elements of $\Z_n$ partition into pairs of distinct elements adding to $i$, and any $S$ such that $i\notin S+S$ is obtained by selecting $m-k$ of these $\frac{n}{2}-1$ pairs and one element from each pair, hence the result. The case where $n$ is even and $i$ is odd is identical except that all $n$ of the elements of $\Z_n$ are now partitioned into pairs as there are no elements that give $i$ when added to themselves. Finally, if $n$ is odd, then there is always a unique element that gives $i$ when added to itself, and the remaining elements partition into $\frac{n-1}{2}$ pairs, after which the same analysis can be applied.
\end{proof}

\begin{lem}\label{lem:cyclic_missing_dif}
    Let $S$ be a subset of $\Z_n$ of size $k$ chosen uniformly at random, and let $i$ be any nonzero element of $\Z_n$ of order $n/d$. Then
    \begin{equation}\label{eq:cyclic_missing_dif}
        \P[i\notin S-S]\ =\ \frac{1}{{\binom{n}{k}}}\sum_{\substack{(k_1,\ldots,k_d)\in \Z_{\ge 0}^d \\ k_1+\cdots+k_d=k}} \prod_{t=1}^d g\left(\frac{n}{d},k_t\right),
    \end{equation}
    where
    \begin{equation}
        g\left(\frac{n}{d},k_t\right)\ =\ \begin{cases} 1,\ k_t=0,\\ \frac{\frac{n}{d}{\binom{\frac{n}{d}-1-k_t}{k_t-1}}}{k_t},\ k_t>0.\end{cases}
    \end{equation}
\end{lem}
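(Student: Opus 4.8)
The plan is to reinterpret the event $i\notin S-S$ as an independent-set condition on a disjoint union of cycles, and then invoke the classical formula for counting independent sets of a cycle.

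First I would note that $i\in S-S$ precisely when some $a\in S$ satisfies $a+i\in S$; equivalently, $i\notin S-S$ means $S$ contains no pair of elements differing by $i$. To exploit this, consider the translation permutation $\tau\colon \Z_n\to\Z_n$, $\tau(x)=x+i$. Its orbits are exactly the cosets of the subgroup $\langle i\rangle\le\Z_n$. Since $i$ has order $n/d$, we have $|\langle i\rangle|=n/d$, so each orbit has size $n/d$ and there are exactly $d$ of them. Traversing each orbit according to $\tau$ arranges its $n/d$ elements around a cycle $C_{n/d}$ whose edges are the pairs $\{x,x+i\}$; here I would check that the edge $\{x+(n/d-1)i,\ x\}$ closes the cycle because $(n/d)i=0$. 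Under this identification, two elements of $S$ differ by $i$ if and only if they are adjacent vertices of one of these $d$ cycles. Hence $i\notin S-S$ holds if and only if, for each orbit, $S$ meets that orbit in an independent set of $C_{n/d}$.

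Next I would stratify by the sizes $k_t=|S\cap O_t|$, where $O_1,\dots,O_d$ are the orbits, so that $k_1+\cdots+k_d=k$. Because the orbits are disjoint and the independence condition is imposed on each separately, the number of size-$k$ subsets $S$ with $i\notin S-S$ and prescribed profile $(k_1,\dots,k_d)$ factors as $\prod_{t=1}^d g(n/d,k_t)$, where $g(N,j)$ denotes the number of $j$-element independent sets of the cycle $C_N$. Summing over all compositions $(k_1,\dots,k_d)$ of $k$ and dividing by the total count $\binom{n}{k}$ of size-$k$ subsets yields the claimed expression for $\P[i\notin S-S]$, provided $g(N,j)$ equals the stated function.

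The main work, and the only genuinely non-formal step, is establishing $g(N,j)=\frac{N}{j}\binom{N-1-j}{j-1}$ for $j\ge 1$ (with $g(N,0)=1$): the number of ways to choose $j$ pairwise non-adjacent vertices on a cycle of length $N$. I would derive this by the standard gaps argument, or equivalently by conditioning on whether a fixed vertex is selected to reduce the cyclic count to a linear (path) count and then simplifying via $\frac{N}{N-j}\binom{N-j}{j}=\frac{N}{j}\binom{N-1-j}{j-1}$. The remaining care is with degenerate ranges: the cases $k_t=0$ must be split off to contribute $1$, and one should confirm that $g(n/d,k_t)=0$ whenever $k_t$ exceeds $\lfloor (n/d)/2\rfloor$, so that over-large compositions drop out automatically under the usual binomial conventions. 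Everything else is bookkeeping.
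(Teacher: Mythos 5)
Your proposal is correct and is essentially the paper's own argument: the paper likewise partitions $\Z_n$ into the $d$ cosets of $i\Z_n$, cyclically orders each by successive addition of $i$, reduces the event $i\notin S-S$ to choosing $k_t$ pairwise non-adjacent elements in each coset, and counts these via the same mark-a-first-element (gaps/stars-and-bars) derivation of $g(n/d,k_t)=\frac{n/d}{k_t}\binom{n/d-1-k_t}{k_t-1}$. Your independent-sets-of-$C_{n/d}$ phrasing is only a cosmetic reframing of the paper's ``non-adjacent elements of a cyclically ordered set,'' so there is nothing substantively different to compare.
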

\begin{rmk}
    When $n$ is prime, we must have $d=1$ always and Equation \eqref{eq:cyclic_missing_dif} simplifies significantly to
    \begin{equation}
        \P[i\notin S-S]\ =\ \frac{g(n,k)}{{\binom{n}{k}}}\ =\ \frac{1}{{\binom{n}{k}}}\cdot\begin{cases}1,\ k= 0,\\ \frac{n{\binom{n-1-k}{k-1}}}{k},\ k>0.\end{cases}
    \end{equation}
    This is the reason why we restrict to such $n$ in Theorem \ref{thm:expectation_dif_prime}.
\end{rmk}

\begin{proof}
    Partition $\Z_n$ into the $d$ additive cosets of the subgroup $i\Z_n$. Each of these cosets has size $n/d$ and has elements that can be cyclically ordered such that the difference between any element and its predecessor is $i$. Choosing a set $S$ of size $k$ such that $i\notin S-S$ is then equivalent to partitioning $k$ into $k_1+\cdots+k_d$ with each $k_t\ge 0$, and choosing $k_t$ non-adjacent elements from the $t^{\text{th}}$ coset to include in $S$. This is precisely what is counted by Equation \eqref{eq:cyclic_missing_dif} if $g(n/d,k_t)$ is equal to the number of ways to choose $k_t$ non-adjacent elements from a cyclically ordered set of size $n/d$.
    
    Indeed, this is clear for $k_t=0$, so assume $k_t>0$. There are $n/d$ choices for the first element to be included, and each selection of $k_t$ elements may be made in $k_t$ different ways by designating different elements to be this first choice. Once the first element has been selected, the number of ways to choose the remaining elements is equal to the number of ways to choose $k_t-1$ non-adjacent elements from a linearly ordered set of size $n/d-1$ without choosing the extremal elements. This is the classic stars and bars problem, which yields precisely the binomial coefficient in the definition of $g(n/d,k_t)$.
\end{proof}

\begin{lem}\label{lem:cyclic_two_set_missing_sum}
    Let $S_1$ and $S_2$ be subsets of $\Z_m$ of sizes $m-k$ and $k$, respectively, chosen uniformly at random, and let $i$ be any element of $\Z_n$. Then
    \begin{equation}
        \P[i\notin S_1+S_2]\ =\ \frac{{\binom{n-k}{m-k}}}{{\binom{n}{m -k}}}.
    \end{equation}
\end{lem}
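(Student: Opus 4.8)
The plan is to collapse the two-set condition to a one-set covering count by conditioning on $S_2$ and using a translation bijection. I would first note that in the intended setup $S_1$ and $S_2$ are drawn independently: they record the rotation indices and the flip indices of $A$ in separate ``slots'' (as in the proof of Lemma~\ref{lem:missing_dihedral_to_cyclic}), so conditioning on a fixed value of $S_2$ leaves $S_1$ uniform over the $\binom{n}{m-k}$ subsets of $\Z_n$ of size $m-k$. Here I read the ambient group as $\Z_n$, as forced by the appearance of $n$ in the answer and by the hypothesis $i\in\Z_n$.

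The key rewriting is that $i$ lies in $S_1+S_2$ precisely when there are $s_1\in S_1$ and $s_2\in S_2$ with $s_1 = i - s_2$; that is, $i\in S_1+S_2$ if and only if $S_1$ meets the translated reflection $i - S_2 := \{\,i - s : s\in S_2\,\}$. Hence, conditional on $S_2$,
\begin{equation}
    \P[i\notin S_1+S_2 \mid S_2]\ =\ \P\big[S_1\cap (i-S_2)=\varnothing \mid S_2\big].
\end{equation}
Since $x\mapsto i-x$ is a bijection of $\Z_n$, the set $i-S_2$ has exactly $k$ elements no matter which $S_2$ was fixed. So the event $S_1\cap(i-S_2)=\varnothing$ asks for $S_1$ to be one of the size-$(m-k)$ subsets contained in the complement $\Z_n\setminus(i-S_2)$, a set of size $n-k$; there are $\binom{n-k}{m-k}$ of these. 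Dividing by the total count $\binom{n}{m-k}$ gives $\binom{n-k}{m-k}/\binom{n}{m-k}$.

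Finally I would observe that this conditional probability does not depend on the particular $S_2$, so averaging over $S_2$ leaves the value unchanged and yields the stated formula. There is no serious obstacle here; the only points that need care are, first, verifying that the reflected translate $i-S_2$ never collapses---that is, that it always has exactly $k$ distinct elements, which is immediate from $x\mapsto i-x$ being a bijection---and, second, confirming that the conditional count is genuinely independent of $S_2$, which is precisely what lets the outer expectation pass through trivially. Contrast this with Equation~\eqref{eq:flip_not_in_sum}, where the two relevant events are \emph{not} independent and no such clean product arises; the simplicity here is exactly what makes the difference-set computation in Theorem~\ref{thm:expectation_dif_prime} tractable.
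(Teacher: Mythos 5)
Your proof is correct and follows essentially the same route as the paper's: fix $S_2$, observe that $i\notin S_1+S_2$ exactly when $S_1$ avoids the $k$ distinct elements $i-S_2$, count the $\binom{n-k}{m-k}$ admissible choices of $S_1$, and note the answer is independent of $S_2$. Your added remarks (the bijection $x\mapsto i-x$ justifying $|i-S_2|=k$, the independence of $S_1$ and $S_2$, and the correction of the statement's typo $\Z_m\to\Z_n$) only make explicit what the paper's proof leaves implicit.
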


\begin{proof}
    For any fixed choice of $S_2$, we have $\binom{n}{m-k}$ total choices for $S_1$. For each of these $S_1$, we have $i\notin S_1+S_2$ if and only if $i-j\notin S_1$ for all $j\in S_2$. Because $|S_2|=k$, this leaves $\binom{n-k}{m-k}$ choices for $S_1$ such that $i\notin S_1+S_2$.
\end{proof}

We now have all the necessary parts to prove Theorem \ref{thm:expectation_dif_prime}. Simple combinatorics yields
\begin{equation}
    \P[\text{$A$ has $k$ flips}]\ =\ \frac{{\binom{n}{k}}{\binom{n}{m-k}}}{{\binom{2n}{m}}}.
\end{equation}
Combining this with Lemmas \ref{lem:missing_dihedral_to_cyclic}, \ref{lem:cyclic_missing_dif}, and \ref{lem:cyclic_two_set_missing_sum} yields the result after some simplification. See Appendix \ref{appendix expectation} for complete details.

By plotting $\E[|A-A|]$ against $m$ for certain large values of $n$, we can see evidence for our intuition from the beginning of this section that $\E[|A-A|]$ quickly becomes close to $2n$ for $m$ on the order of magnitude of $\sqrt{n}$ (Figure \ref{fig:ex_A-A_vs_m}). Numerical evidence from many large primes $n$ suggests that the value of $m$ such that $\E[|A-A|]=n$ is about $1.3875\sqrt{n}$.\footnote{Some code written for this project can be found at \url{https://github.com/ZeusDM/MSTD-experiments}.}

\begin{figure}[htbp]
    \centering
    \includegraphics[width = 0.8\textwidth]{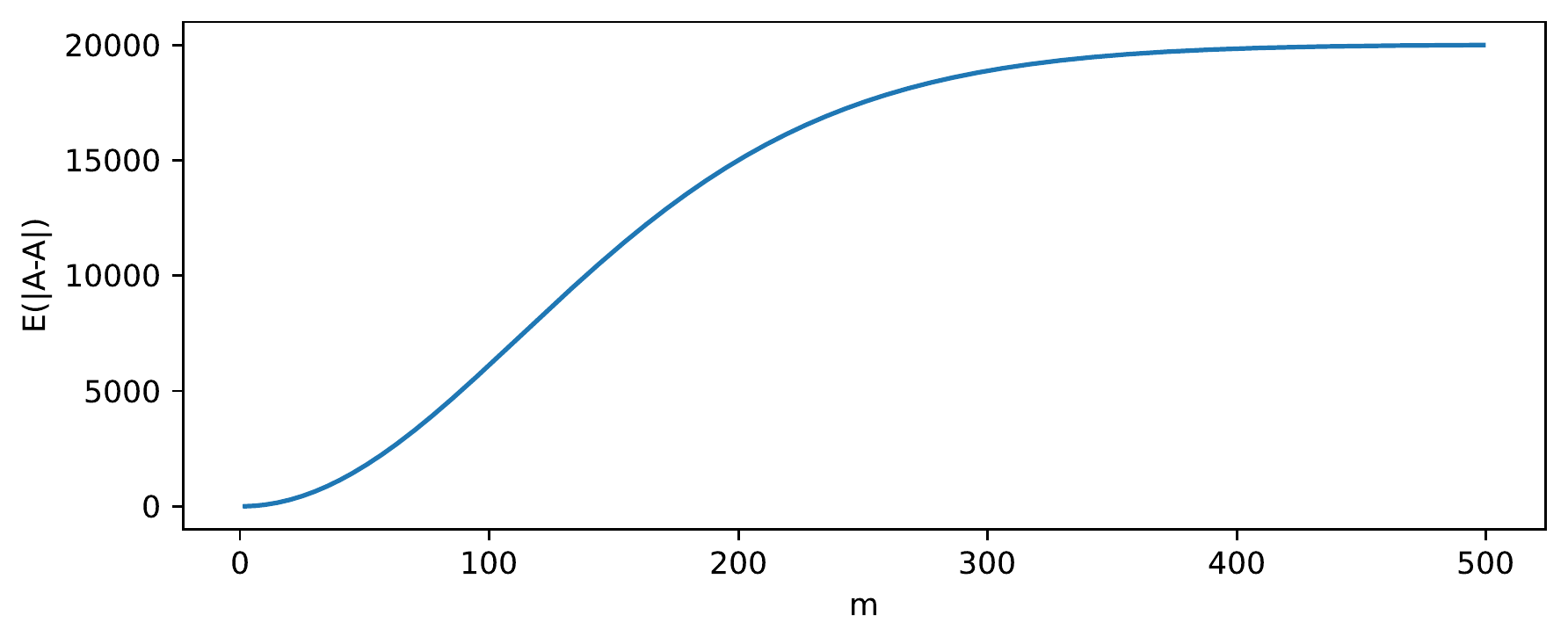}
    \caption{A plot of $\E[|A-A|]$ versus $m$ for $A\in \S{m}$ with $n=10007$. Note that $\E[|A-A|]$ rapidly approaches $2n = 20014$ as $m$ approaches a small multiple of $\sqrt{n}\approx 100$.}
    \label{fig:ex_A-A_vs_m}
\end{figure}
%%%%%%%%%%%%%%%%%%%%%%%%%%%%%%%%%%%%%%%%%%%%%%%%%%%%%%%%%%%%%%%%%%%%%%%%%%%%%%%%%%%%%%%%%%%%%%%%%%%%%
\section{Future Work}\label{future work}

An immediate future direction of research is to extend the bounds on $m$ to show Conjecture \ref{conj:SDm-mmstdtmdts} for all $m$ and $n$. However, it is unlikely that the methods we have used in this paper will be useful for larger values of $m$. This is because our approach showed that for values of $m$ that we considered, the majority of subsets of the generalized dihedral group with size $m$ were MSTD. But this is simply not the case for larger $m$: numerical evidence shows that for any $m\gg \sqrt{n}$, the vast majority of the sets are balanced. A new approach is required to show that out of the sets that are not balanced, more are MSTD.

To this end, it would also be productive to more carefully analyze how many elements of the group $D$ are not in $A+A$ (or not in $A-A$) for $|A|$ closer to $n$. This follows the approach of \cite{hegarty2009almost} for the ``slow decay" case. Such an analysis could be used to find explicit formulas for the expectation values of $|A+A|$ and $|A-A|$. In this paper we found a formula for $\E[|A-A|]$, but only when $n$ is prime. Further, to use these results, we would also need to bound the variance of $| A + A |$ and $| A - A |$. Depending on the results, this could be enough to prove Conjecture \ref{conj:gendih-mmstdtmdts} if we can deduce an upper bound $M_\ell$ on the number of MDTS sets and a lower bound $M_s$ on the number of MSTD sets such that $M_\ell < M_s$.

Yet another possible approach to prove Conjecture \ref{conj:gendih-mmstdtmdts} is to construct an injective map from MDTS sets to MSTD sets in the group. Such an approach has proven to be difficult, but has the potential advantage of working for both large and small values of $m$.

%%%%%%%%%%%%%%%%%%%%%%%%%%%%%%%%%%%%%%%%%%%%%%%%%%%%%%%%%%%%%%%%%%
%%%%%%%%%%%%%%%%%%%%%%%%%%%%%%%%%%%%%%%%%%%%%%%%%%%%%%%%%%%%%%%%%%

\appendix
\section{Proof of Lemmas \ref{S2n2} and \ref{S2n3}}\label{base cases}

\stwontwo*

\begin{proof}
There are only 3 possible cases to consider for $A$.  

\begin{itemize}
    \item $A$ contains two flip elements: in this case, adding and subtracting flips is an identical operation as each flip element has order 2.  Thus, $A$ will necessarily be sum-difference balanced.  
    \item $A$ contains one flip and one rotation element: let $r^i,r^js\in A$.  Here, $A+A=\{1,r^{2i}, r^{i+j}s, r^{j-i}s\}$.  However, $A-A=\{1,r^{i+j}s\}$.  When $r^{2i}\ne e$ or $r^{j-i}s\ne r^{i+j}s$, $A$ will be MSTD.  In the case where both of these are actually equalities ($A+A=\{e,r^{i+j}s\}$), $A$ will simply be balanced.
    \item $A$ contains two rotation elements: let $r^a,r^b\in A$.  Here, $A+A=\{r^{2i},r^{i+j},r^{2j}\}$.  However, $A-A=\{1,r^{i-j},r^{j-i}\}$.  Note that $i\ne j$.  Both the sum set and the difference set have 3 elements except in one special case.  Suppose that $r^{2i}=r^{2j}$.  Then, we have that $r^ir^{-j}=r^jr^{-i}\implies r^{i-j}=r^{j-i}$.  Thus, when $A$ contains two rotation elements only, $A$ is always balanced.
\end{itemize}
Therefore, $A$ has strictly more MSTD sets in $\mathcal{S}_{2n,2}$ than MDTS sets.
\end{proof}

\stwonthree*

\begin{proof}
There are 4 possible cases to consider for $A$.

\begin{itemize}
    \item $A$ contains three flip elements: just as in the $m=2$ case, since addition and subtraction of two flips are equivalent, $A$ is balanced.
    \item $A$ contains two flip elements and one rotation element: let $A=\{r^i,r^js,r^ks\}$ for $j\neq k$. Then the sumset is
    \begin{equation}
        A+A = \{r^{2i},r^{i+j}s,r^{i+k}s,r^{j-k},r^{k-j},1,r^{j-i}s,r^{k-i}s\},
    \end{equation}
    while the difference set is
    \begin{equation}
        A-A = \{r^{i+j}s,r^{i+k}s,r^{j-k},r^{k-j},1\},
    \end{equation}
    so $A-A\subseteq A+A$. Moreover, $A$ is MSTD precisely when at least one of $r^{j-i}s$ and $r^{k-i}s$ is distinct from each of the flips in $A+A$ that lie in $A-A$. We enumerate all the ways that this could fail to occur:
    
    We have $r^{i+j}s=r^{j-i}s$ and $r^{i+k}s = r^{k-i}s$ if and only if $i = -i\pmod{n}$. This happens if and only if $i=0$ or if $n$ is even and $i = n/2$, and in each of these cases we have $\binom{n}{2}=\frac{1}{2}(n^2-n)$ choices for $j$ and $k$. On the other hand, we have $r^{i+j}s=r^{k-i}s$ and $r^{i+k}s=r^{j-i}s$ if and only if $2i=k-j=j-k\pmod{n}$, which implies that $2j=2k\pmod{n}$. We also require $2i=1\pmod{n}$; because $j\neq k$, these equations can occur if and only if $n$ is divisible by $4$, $i = n/4$ or $3n/4$, and $k-j = n/2\pmod{n}$. We therefore have $2$ choices for $i$, and $\frac{1}{2}n$ choices for $j$ and one corresponding choice of $k$ for each $i$. Finally, note that we cannot have $r^{j-i}s=r^{k-i}s$ because $k\neq i$, so this completes the enumeration of all such sets $A$ that fail to be MSTD.
    
    There are $n\binom{n}{2} = \frac{1}{2}(n^3-n^2)$ total sets $A$ containing exactly $2$ flips. Subtracting the balanced sets that we just enumerated, we obtain the following numbers of MSTD sets:
    \begin{align}\label{eq:m3k2mstd}
        \frac{1}{2}(n^3-n^2)\ &\text{if $n=1\pmod{2}$,}\nonumber\\
        \frac{1}{2}(n^3-n^2)-n(n-1)\ &\text{if $n=2\pmod{4}$,}\nonumber\\
        \frac{1}{2}(n^3-n^2)-n(n-1)-n\ &\text{if $n=0\pmod{4}$.}
    \end{align}
    \item $A$ contains one flip element and two rotation element: let $A=\{r^i, r^j, r^ks\}$ for $i\neq j$. Then the sumset is     
    \begin{equation}
        A+A = \{r^{i+j},r^{2i},r^{2j},r^{i+k}s,r^{j+k}s,r^{k-i}s,r^{k-j}s,1\},
    \end{equation}
    while the difference set is
    \begin{equation}
        A-A = \{r^{i-j},r^{j-i},r^{i+k}s,r^{j+k}s,1\},
    \end{equation}
    We show that $A$ cannot be MDTS by separately comparing the flips and the rotations in $A+A$ and $A-A$. The flips in $A-A$ are contained in $A+A$, so this comparison is trivial. The rotations in $A-A$ are $r^{i-j}, r^{j-i},$ and $1$; we will show that $A+A$ has at least as many rotations. Consider the rotations in $A+A$: note that $r^{i+j}$ is different from $r^{2i}$ and from $r^{2j}$ since $i\neq j$, so there are at least $2$ distinct rotations in $A+A$. If $r^{2i}\neq r^{2j}$, then in fact $A+A$ has at least three distinct rotations, and we have $|A+A|\geq |A-A|$. On the other hand, if $r^{2i}=r^{2j}$, then $r^{i-j} = r^{j-i}$, so there are only at most $2$ distinct rotations in $A-A$, so again $|A+A|\geq |A-A|$.
    
    \item $A$ contains three rotation elements: for this case, there are $\binom{n}{3}$ such subsets.  We compare directly to Equation \eqref{eq:m3k2mstd}, the case that yields the least possible number of MSTD sets in $\mathcal{S}_{2n,3}$.  Here we have 
    \begin{equation}
        \frac{1}{2}(n^3-n^2)-n(n-1)-n>\frac{1}{6}n(n-1)(n-2)
    \end{equation}
    or equivalently,
    \begin{equation}
        n^3-3n^2-n>0,
    \end{equation}
    which is true if $n\ge 4$. When $n=3$, since $n=1\mod 2$, we compare $\binom{n}{3}$ to $\frac{1}{2}(n^3-n^2)$ instead, and the result is verified once again. 
    
\end{itemize}
Therefore, even if all of the subsets with three rotation elements are MDTS, which is never true to begin with, we still have strictly more MSTD subsets of size 3 than MDTS subsets of size 3.
\end{proof}

%%%%%%%%%%%%%%%%%%%%%%%%%%%%%%%%%%%%%%%%%%%%%%%%%%%%%%%%%%%%%%%%%%

\section{Proof of Lemma \ref{eqnumcollisions}}\label{appendix eqnumcol}
\setlist[itemize]{topsep=1pt}
    For any given $(x,y,F) \in \Z_2 \ltimes \Z_n^2$, we calculate the number of pairs $a,b \in \Z_2 \ltimes \Z_n^2$ where $a-b = (x,y,F)$. Similarly, we also calculate the number of pairs $a,b \in \Z_2 \ltimes \Z_n^2$ where $a+b = (x,y,F)$. Note that these can be counted through counting the number of pairs in $\Z_n^2$ which results in $(x,y)$ as follows:

Case 1, $F=0$ \begin{itemize}
    \item Case 1.1, $a-b = (x,y,0)$ \begin{itemize}
        \item Case 1.1.1 $a$ and $b$ are not flips. 
        
        We get that this is equivalent to counting pairs $(a_1,a_2),(b_1,b_2) \in \Z_n^2$ where $a_1-b_1 \equiv x \mod n$ and $a_2-b_2 \equiv y \mod n$ 
        
        \item Case 1.1.2 $a$ and $b$ are both flips. 
        
        We get that this is equivalent to counting pairs $(a_1,a_2),(b_1,b_2) \in \Z_n^2$ where $a_1-b_1 \equiv x \mod n$ and $a_2-b_2 \equiv y \mod n$ 
        \end{itemize}
    
    \item Case 1.2, $a+b = (x,y,0)$ \begin{itemize}
        \item Case 1.2.1 $a$ and $b$ are not flips. 
        
        We get that this is equivalent to counting pairs $(a_1,a_2),(b_1,b_2) \in \Z_n^2$ where $a_1+b_1 \equiv x \mod n$ and $a_2+b_2 \equiv y \mod n$ 
        
        \item Case 1.2.2 $a$ and $b$ are both flips. 
        
        We get that this is equivalent to counting pairs $(a_1,a_2),(b_1,b_2) \in \Z_n^2$ where $a_1-b_1 \equiv x \mod n$ and $a_2-b_2 \equiv y \mod n$ 
        \end{itemize}
\end{itemize}

Case 2, $F = 1$ \begin{itemize}
    \item Case 2.1 $a-b = (x,y,1)$ \begin{itemize}
        \item Case 2.1.1 $a$ is a flip. 
        
        We get that this is equivalent to counting pairs $(a_1,a_2),(b_1,b_2) \in \Z_n^2$ where $a_1+b_1 \equiv x \mod n$ and $a_2+b_2 \equiv y \mod n$ 
        
        \item Case 2.1.2 $b$ is a flip. 
        
        We get that this is equivalent to counting pairs $(a_1,a_2),(b_1,b_2) \in \Z_n^2$ where $a_1+b_1 \equiv x \mod n$ and $a_2+b_2 \equiv y \mod n$ 
        \end{itemize}
        
    \item Case 2.2 $a+b = (x,y,1)$ \begin{itemize}
        \item Case 2.2.1 $a$ is a flip. 
        
        We get that this is equivalent to counting pairs $(a_1,a_2),(b_1,b_2) \in \Z_n^2$ where $a_1-b_1 \equiv x \mod n$ and $a_2-b_2 \equiv y \mod n$ 
        
        \item Case 2.2.2 $b$ is a flip. 
        
        We get that this is equivalent to counting pairs $(a_1,a_2),(b_1,b_2) \in \Z_n^2$ where $a_1+b_1 \equiv x \mod n$ and $a_2+b_2 \equiv y \mod n$ 
        \end{itemize}
\end{itemize}

We get a similar result with $a,b \in \Z_2 \ltimes \Z_{n^2}$ where $a-b = (xn+y,F)$ or $a+b = (xn+y,F)$ as follows:

Case 1, $F=0$ \begin{itemize}
    \item Case 1.1, $a-b = (xn+y,0)$ \begin{itemize}
        \item Case 1.1.1 $a$ and $b$ are not flips. 
        
        We get that this is equivalent to counting pairs $a_1n+a_2,b_1n+b_2 \in \Z_{n^2}$ where $(a_1n+a_2)-(b_1n+b_2) \equiv xn+y \mod n^2$
        
        \item Case 1.1.2 $a$ and $b$ are both flips. 
        
        We get that this is equivalent to counting pairs $a_1n+a_2,b_1n+b_2 \in \Z_{n^2}$ where $(a_1n+a_2)-(b_1n+b_2) \equiv xn+y \mod n^2$
        \end{itemize}
    
    \item Case 1.2, $a+b = (xn+y,0)$ \begin{itemize}
        \item Case 1.2.1 $a$ and $b$ are not flips. 
        
        We get that this is equivalent to counting pairs $a_1n+a_2,b_1n+b_2 \in \Z_{n^2}$ where $(a_1n+a_2)+(b_1n+b_2) \equiv xn+y \mod n^2$
        
        \item Case 1.2.2 $a$ and $b$ are both flips. 
        
        We get that this is equivalent to counting pairs $a_1n+a_2,b_1n+b_2 \in \Z_{n^2}$ where $(a_1n+a_2)-(b_1n+b_2) \equiv xn+y \mod n^2$
        \end{itemize}
\end{itemize}

Case 2, $F = 1$ \begin{itemize}
    \item Case 2.1 $a-b = (xn+y,1)$ \begin{itemize}
        \item Case 2.1.1 $a$ is a flip. 
        
        We get that this is equivalent to counting pairs $a_1n+a_2,b_1n+b_2 \in \Z_{n^2}$ where $(a_1n+a_2)+(b_1n+b_2) \equiv xn+y \mod n^2$
        
        \item Case 2.1.2 $b$ is a flip. 
        
        We get that this is equivalent to counting pairs $a_1n+a_2,b_1n+b_2 \in \Z_{n^2}$ where $(a_1n+a_2)+(b_1n+b_2) \equiv xn+y \mod n^2$
        \end{itemize}
        
    \item Case 2.2 $a+b = (xn+y,1)$ \begin{itemize}
        \item Case 2.2.1 $a$ is a flip. 
        
        We get that this is equivalent to counting pairs $a_1n+a_2,b_1n+b_2 \in \Z_{n^2}$ where $(a_1n+a_2)-(b_1n+b_2) \equiv xn+y \mod n^2$
        
        \item Case 2.2.2 $b$ is a flip. 
        
        We get that this is equivalent to counting pairs $a_1n+a_2,b_1n+b_2 \in \Z_{n^2}$ where $(a_1n+a_2)+(b_1n+b_2) \equiv xn+y \mod n^2$
        \end{itemize}
\end{itemize}
Note that the signs are the same between corresponding cases in $\Z_2 \ltimes \Z_{n^2}$ and $\Z_2 \ltimes \Z_n^2$. And thus there are only four distinct cases we need to count, and it suffices to compare the number of 4-tuple collisions over all $(x,y) \in \Z_n^2$ with the number of 4-tuple collisions over all $xn+y \in \Z_{n^2}$

We will prove the main result by counting the collisions of each of the 3 types. When $n$ is odd, it is simple to see from Lemma \ref{pairdiff}, Lemma \ref{pairsumtwo}, and Lemma \ref{pairsumsquare} that the number of collisions for each of the $(x,y)$ in $\Z_n^2$ is the same for the corresponding $xn+y \in \Z_{n^2}$ and thus there are an equal total number of collisions. We now assume that $n$ is even. \begin{itemize}[topsep=0cm]
        \item $a+b=c+d$ \\
        Suppose $y$ is odd, from Lemma \ref{pairsumtwo} we have $(\frac{n^2}{2})^2$ pairs for each $(x,y)$ for a total of $\frac{n^5}{4}$ collisions. From Lemma \ref{pairsumsquare}, we also get $(\frac{n^2}{2})^2$ pairs for each $xn+y$ for an equal total $\frac{n^5}{4}$ collisions.

        If $y$ is even, we have a half of $(x,y)$ yielding $(\frac{n^2}{2})^2$ pairs and another half yielding $(\frac{n^2+4}{2})^2$ pairs for a total of $\frac{n}{2}*\frac{n^4+n^4+8n^2+16}{4}=\frac{n^5+4n^3+8n}{4}$. Meanwhile in $\Z_{n^2}$ we have each $xn+y$ yielding $(\frac{n^2+2}{2})^2$ for an equal total of $\frac{n^5+4n^3+4n}{4}$. Thus, there are strictly more collisions in $\Z_n^2$ in this case.
        
        \item $a+b=c-d$ \\
        Suppose $y$ is odd, we have $\frac{n^2}{2}$ choices for $a+b$ and $n^2$ choices for $c-d$ for a total of $\frac{n^5}{2}$. We also get the same results for $\Z_{n^2}$.

        If $y$ is even, we have half of $(x,y)$ yielding $\frac{n^2}{2}*n^2$ pairs and another half yielding $\frac{n^2+4}{2}*n^2$ pairs for a total of $\frac{n}{2}*\frac{n^4+n^4+4n^2}{2}=\frac{n^5+2n^3}{2}$. For $\Z_{n^2}$, we have each $(x,y)$ yielding $\frac{n^2+2}{2}*n^2$ collisions for a total of $\frac{n^5+2n^3}{2}$

        \item $a-b=c-d$ \\
        Both groups have the same number of collisions for each $(x,y)$ and the corresponding $xn+y$ at $n^4$. Thus, the total number of collisions in this case is $n^5$. 
    \end{itemize}
    Thus, if we look over all the cases where $n$ is even, we found that either there are strictly more collisions in $\Z_n^2$  or there are equal amount of collisions. Thus, we can conclude that there are strictly more collisions in $\mathbb{Z}_2 \ltimes \mathbb{Z}_n^2$ when $n$ is even.

%%%%%%%%%%%%%%%%%%%%%%%%%%%%%%%%%%%%%%%%%%%%%%%%%%%%%%%%%%%%%%%%%%

\section{Proof of Theorem \ref{thm:expectation_dif_prime}}\label{appendix expectation}
    We have
    \begin{align}
        \E[|A-A|]\ &=\ |D_{2n}| - \sum_{g\in D_{2n}} \P[g\notin A-A]\nonumber\\
        &=\ 2n - \P[1\notin A-A] - \left[\sum_{i=1}^{n-1} \P[r^i\notin A-A]\right] - \left[\sum_{i=0}^{n-1} \P[r^is\notin A-A]\right]\nonumber\\
        &=\ 2n - 0 - \left[\sum_{i=1}^{n-1} \sum_{k=0}^m\ \begin{matrix*}[l]\ \ \P[\text{$A$ has $k$ flips}]\\ \cdot\ \P[i\notin S-S|\text{$S\subseteq \Z_n$, $|S|=m-k$}]\\ \cdot\ \P[i\notin S-S|\text{$S\subseteq \Z_n$, $|S|=k$}]\end{matrix*}\right] \nonumber\\
        &\quad\quad\quad\quad\ \ - \left[\sum_{i=0}^{n-1}\sum_{k=0}^m\ \begin{matrix*}[l]\ \ \P[\text{$A$ has $k$ flips}]\\ \cdot\ \P[i\notin S_1+S_2|\text{$S_1,S_2\subseteq \Z_n$, $|S_1|=m-k$, $|S_2|=k$}]\end{matrix*}\right]\nonumber\\
        &=\ 2n - \left[\sum_{i=1}^{n-1}\sum_{k=0}^m \frac{{\binom{n}{k}}{\binom{n}{m-k}}}{{\binom{2n}{m}}}\cdot \frac{g(n,k)}{{\binom{n}{k}}}\cdot \frac{g(n,m-k)}{{\binom{n}{m -k}}}\right]\nonumber\\
        &\quad\quad\ \ \ -\left[\sum_{i=0}^{n-1}\sum_{k=0}^m\frac{{}{\binom{n}{m-k}}}{{\binom{2n}{m}}}\cdot \frac{{\binom{n-k}{m-k}}}{{\binom{n}{m -k}}}\right]\nonumber\\
        &=\ 2n - \left[(n-1)\sum_{k=0}^m \frac{g(n,k)\cdot g(n,m-k)}{{\binom{2n}{m}}}\right] - \left[n\sum_{k=0}^m \frac{{\binom{n}{k}}{\binom{n-k}{m-k}}}{{\binom{2n}{m}}}\right] \nonumber\\
        &=\ 2n - \left[2(n-1)\frac{g(n,0)\cdot g(n,m)}{{\binom{2n}{m}}} +(n-1)\sum_{k=1}^{m-1}\frac{\frac{n{\binom{n-1-k}{k-1}}}{k}\cdot \frac{n{\binom{n-1-m+k}{m-k-1}}}{m-k}}{{\binom{2n}{m}}}\right]\nonumber\\
        &\quad\quad\ \ \ -\left[\frac{n}{{\binom{2n}{m}}} \sum_{k=0}^m {\binom{n}{k}}{\binom{n-k}{m-k}}\right] \nonumber\\
        &=\ 2n-\frac{nm2^m{\binom{n}{m}}+2n(n-1){\binom{n-m-1}{m-1}}}{m{\binom{2n}{m}}} -\frac{n^2(n-1)}{{\binom{2n}{m}}}\sum_{k=1}^{m-1}\frac{{\binom{n+k-m-1}{m-k-1}}{\binom{n-k-1}{k-1}}}{k(m-k)},
    \end{align}
    by the combinatorial identity
    \begin{equation}
        \sum_{k=0}^m {\binom{n}{k}}{\binom{n-k}{m-k}}\ =\ 2^m {\binom{n}{m}}.
    \end{equation}

%%%%%%%%%%%%%%%%%%%%%%%%%%%%%%%%%%%%%%%%%%%%%%%%%%%%%%%%%%%%%%%%%%

\printbibliography

\end{document}